\documentclass[a4paper,11pt,reqno]{amsart}
\usepackage{inputenc}
\usepackage{amsmath,amsthm,amsfonts,amssymb}
\usepackage[left=2.20cm,right=2.25cm,top=3cm,bottom=3cm]{geometry}
\usepackage[dvips]{graphicx}
\usepackage[english]{babel}
\usepackage{fouridx}
\usepackage{dsfont}
\usepackage{xcolor}
\usepackage{enumerate}
\usepackage{todonotes}
\setlength\marginparwidth{0.7in}
\usepackage{graphicx, xcolor,fancyhdr}

\usepackage{amsmath,amsthm,amsfonts,amssymb, enumerate, empheq}

\usepackage{mathrsfs, stmaryrd, dsfont}
\usepackage{paralist}

\newcommand\dela[1]{}

\theoremstyle{plain}

\newtheorem{lem}{Lemma}[section]
\newtheorem{thm}[lem]{Theorem}
\newtheorem{prop}[lem]{Proposition}
\newtheorem{cor}[lem]{Corollary}
\theoremstyle{definition}
\newtheorem{definition}[lem]{Definition}
\newtheorem{rem}[lem]{Remark}

\newtheorem{assum}[lem]{Assumption}

\newcommand{\Leb}{\mathrm{Leb}}
\newcommand{\embed}{\hookrightarrow}

\newcommand{\divv}{\mathrm{div}\,}

\newcommand\delb[1]{} 



\newcommand{\eps}{\varepsilon}
\newcommand{\NRJ}[2]{\mathcal{E}[#1](#2)}

\newcommand{\NRJR}[3]{\mathcal{E}_{#1}[#2](#3)}
\newcommand{\ENST}[2]{\mathcal{D}[#1](#2) }



\newcommand{\bv}{{\mathbf{v}}}

\renewcommand{\d}{\mathbf{d}}

\newcommand{\bu}{\mathbf{u}}

\newcommand{\bh}{\mathbf{h}}
\newcommand{\bn}{\mathbf{n}}
\newcommand{\h}{\mathrm{H}}
\newcommand{\sh}{\mathsf{H}}
\newcommand{\MO}{\mathcal{O}}

\renewcommand{\d}{\mathbf{d}}

\newcommand{\bd}{\mathbf{d}}

\newcommand{\el}{\mathbb{L}}
\newcommand{\ve}{\mathrm{V}}
\newcommand{\bue}{\bu^\eps}
\newcommand{\bne}{\bn^\eps}


\newcommand{\CO}{\mathcal{O}}

\DeclareMathOperator{\Div}{div}







\newcommand{\stok}{\mathsf{A}}

\newcommand{\rA}{\mathrm{A}}

\newcommand{\bH}{\mathsf{H}}
\renewcommand{\d}{\mathbf{d}}

\newcommand{\rK}{\mathrm{K}}

\definecolor{lightsg}{HTML}{20B2AA}
\colorlet{TLW}{teal!20!white}

\usepackage[english]{babel}
\usepackage{wrapfig}


\usepackage{times}
\usepackage[T1]{fontenc}

\usepackage{todonotes}

\renewcommand{\rA}{\mathrm{A}}
\newcommand{\rH}{\mathrm{H}}
\newcommand{\rV}{\mathrm{V}}
%
%
%
%
%
%
%
%
%
%
%
%
%
%

\numberwithin{equation}{section}

\title[2D stochastic Ericksen-Leslie system ]{On strong solution to the 2D stochastic Ericksen-Leslie system: A Ginzburg-Landau approximation approach }
\author[Z. Brze\'zniak]{Zdzis{\l}aw Brze{\'z}niak}
\address{Department of Mathematics \\
	University of York, Heslington, York YO10
	5DD, UK} \email{zdzislaw.brzezniak@york.ac.uk}

\author[
G. Deugou\'e]{Gabriel Deugou\'e}

\address{Department of Mathematics and Computer Science\\ University of Dschang, Dschang, Cameroon} \email{
	agdeugoue@yahoo.fr}

\author[P. Razafimandimby]{Paul Andr\'e  {Razafimandimby}}
\address{School of Mathematical Sciences \\Dublin City University, Glasnevin, Dublin 9, Ireland} \email{paul.razafimandimby@dcu.ie}
\thanks{Part of this article was written when P. Razafimandimby was a Marie Sk\l{}odowska-Curie  fellow at the University of York.  \\
	\textbf{This article is part of a project that received funding from the  European Union's Horizon 2020 research and innovation programme under the Marie Sk\l{}odowska-Curie grant agreement No. 791735 ``SELEs"}.
}
\begin{document}

\begin{abstract}
	In this manuscript, we consider a highly nonlinear and constrained stochastic PDEs modelling the dynamics of 2-dimensional nematic liquid crystals under random perturbation. This system of SPDEs is also known as the stochastic Ericksen-Leslie equations (SELEs). We discuss the existence of local strong solution to the stochastic Ericksen-Leslie equations. In particular, we study  the convergence the stochastic Ginzburg-Landau approximation of  SELEs, and prove that the SELEs with initial data in $\sh^1\times \sh^2$ has at least a  martingale, local  solution which is strong in PDEs sense. 
\end{abstract}
\maketitle

\section{Introduction}

In 1995, Lin and Liu introduced in \cite{Lin-Liu} the following system of PDEs
\begin{subequations} \label{1a}
	\begin{align}
	&\partial_t \bv+ \bv\cdot \nabla \bv-\Delta \bv +\nabla \mathrm{p}=-\Div~ (\nabla \bd\odot\nabla \bd)  , \text{in $ [0,T)\times O $}\\
	&	\partial_t \bd + \bv\cdot \nabla \bd= \Delta \bd + \lvert \nabla \bd \rvert^2 \bd  ,\\ 
	&	\Div \bv=0,~~\text{in~$[0,T)\times O $},\\
	&	\bv =	{\frac{\partial \bd}{\partial \nu}}=0,\text{ on~$[0,T)\times \partial O $},\\
	&	\lvert \bd \lvert=1,\text{in~$[0,T)\times O $},\label{Eq:Sphere-Constraint}\\
	&	(\bv(0),\bd(0))=(\bv_{0},\bd_{0}),\text{in~$ O$},
	\end{align}
\end{subequations}
as a simplified model describing the motion of a nematic liquid crystal.  The number $T>0$ is a fixed real number, $O$ is a bounded domain with smooth boundary,   the vector fields $\bv:  [0,T)\times O \to \mathbb{R}^2$ and $\bd:  [0,T)\times O\to  \mathbb{R}^3$ represent the velocity and director fields, respectively.  The function $p: [0,T)\times O\to \mathbb{R}$ is the fluids pressure and   $\nabla \bd\odot\nabla \bd$ is the matrix
defined by
\begin{equation*}
[\nabla \bd \odot \nabla \bd ]_{ij}=\sum_{k=1}^3 \partial_i \bd_k \partial_j \bd_k, \quad i,j \in\{1,2\}.
\end{equation*}
For more details on physical modeling of liquid crystal  we refer to the books  \cite{Gennes} and \cite{Stewart} and the papers \cite{Ericksen} and \cite{Leslie}.

The model \eqref{1a} is an oversimplification of a Ericksen-Leslie model of nematic liquid crystal with the one-constant simplification of the Frank-Oseen energy density
\begin{equation*}
\frac12 \lvert  \nabla \bd \rvert^2.
\end{equation*}
However,  the model still  retains many  mathematical and essential features of the hydrodynamic equations for nematic liquid crystals. Moreover,  the mathematical  analysis of the above equations is quite challenging  due to the sphere constraint \eqref{Eq:Sphere-Constraint}, the highly nonlinear coupling term $-\Div~ (\nabla \bd\odot\nabla \bd)$ and  the non-parabolicity of the problem which can be seen from the fact
\begin{equation*}
\Delta \bd + \lvert \nabla \bd \rvert^2 \bd = -\bd\times(\bd\times\Delta \bd), \text{ for } \bd \in \mathbb{S}^2.
\end{equation*}
Because  of these  observations, the system \eqref{1a} has been extensively studied and several important results have been obtained. In addition to the paper \cite{Lin-Liu}  we cited above we refer, among others, to \cite{Hong,Hong12,LLW,
	Lin-Liu2,Lin-Wang,WZZ} for results obtained prior to 2013, and to \cite{YC+SK+YY-2018,Hong14,MCH+YM-2019,Huang14,Huang16,JL+ET+ZX-2016,Lin+Wang16,Wang2014,Wang2016,WZZ15} for results obtained after 2014. For detailed reviews of the literature about the mathematical theory of nematic liquid crystals and other related models, we recommend the review articles \cite{Lin+Wang-2014,MH+JP-2018,Climent} and the recent papers \cite{MCH+YM-2019,Lin+Wang16}.

In this paper, we fix  two numbers $T, \eps>0$ and consider in the 2D torus $\CO$ the following stochastic system
\begin{subequations} \label{Eq-SGL}
	\begin{align}
	&	d\bu+\Bigl[(\bu\cdot\nabla)\bu-\Delta \bu+\nabla p\Bigr]dt=  -\nabla\cdot(\nabla \bn \odot \nabla \bn)dt+ dW,\label{eqn-SLQE-v} \\
	&	\divv \bu=\int_\CO \bu dx = 0,\label{eqn-SLQE-div} \\
	&	d\bn+(\bu\cdot\nabla)\bn dt = \bigl[\Delta
	\bn-\frac{1}{\eps^2}(1-\lvert \bn \rvert^2)\bn] \bigr]dt+(\bn\times \bh)\circ
	d\eta, \label{eqn-SLQE-d}\\
	& \bu(t=0)=\bu_0 \text{ and } \bn(t=0)=\bn_0, \label{Init-Cond}
	\end{align}
\end{subequations}
where $\bu_0: \MO \to \mathbb{R}^d$, $\bn_0:\MO\to \mathbb{R}^3$, $\bh:\CO\to \mathbb{R}^3$ are given mappings; $W$ and $\eta$ are respectively independent cylindrical Wiener process and standard Brownian motion, $\circ d\eta$ is
the Stratonovich integral. 

We should note that the deterministic  version of \eqref{Eq-SGL}, that is,
\begin{subequations}\label{Eq-DGL}
	\begin{align}
	&	\partial_t \bv+(\bv\cdot\nabla)\bv-\Delta \bv+\nabla p=  -\nabla\cdot(\nabla \bd \odot \nabla \bd),\label{eqn-DLQE-v} \\
	&	\divv \bv= 0,\label{eqn-DLQE-div} \\
	&	\partial_t \bd+(\bv\cdot\nabla)\bd  = \Delta
	\bd-\frac{1}{\eps^2}(1-\lvert \bd \rvert^2)\bd, \label{eqn-DLQE-d}\\
	&	\bv=0 \text{ and } \frac{\partial \bd}{\partial \nu}=0 \text{ on }
	\partial O,\label{BC}\\
	& \bv(0)=\bv_0 \text{ and } \bd(0)=\bd_0, \label{Init-Cond-D}
	\end{align}
\end{subequations}
was proposed in \cite{Lin-Liu} as an approximation of the simplified Ericksen-Leslie system \eqref{1a}. 

Our study in this paper is motivated by the need for a sound mathematical analysis of the effect of the stochastic external perturbation on the dynamics of nematic liquid crystals. While the role of noise on the dynamics of the director $\bn$ has been the subject of numerous theoretical and experimental studies in physics,  there are very few rigorous mathematical results related  to models for nematic liquid  under random perturbations. The unpublished manuscript  \cite{BHP13}  is the first paper to prove the existence of strong solution of the stochastic \eqref{Eq-SGL}. This result was generalized in \cite{BHP20} to the case where the quadratic $\mathds{1}_{\lvert \bd \rvert\leq 1} (1-\lvert \bd\rvert^2)\bd $ is replaced by a more general polynomial function.    The paper \cite{BHP18} deals  with weak (both in PDEs and stochastic calculus sense) solutions and the maximum principle. 

Very recently, Hausenblas along with the first and the third author of the present considered in  \cite{BHP19} the stochastic Ericksen-Leslie Equations (SELEs) 
\begin{subequations} \label{SELEs}
	\begin{align}
	&d \bu+[ \bu\cdot \nabla \bu-\Delta \bu +\nabla \mathrm{p}]dt=-\Div~ (\nabla \bn\odot\nabla \bn)dt+ dW  , \text{in $ [0,T)\times \CO $}\\
	&	d \bn + \bu\cdot \nabla \bn dt= [\Delta \bn + \lvert \nabla \bn \rvert^2 \bn ]dt + (\bn\times \bh )\circ d\eta ,\\ 
	&	\Div \bu=0,~~\text{in~$[0,T)\times \CO $},\\
	&	\lvert \bn \lvert=1,\text{in~$[0,T)\times \CO $},\label{Eq:Sphere-Constraint-1}\\
	&	(\bu(0),\bn(0))=(\bu_{0},\bn_{0}),\text{in~$ \CO$}.
	\end{align}
\end{subequations}
Bu using a fixed point method, they showed the existence of local strong solution  $(\bu_0, \bn_0)\in \h^{\alpha}\times \h^{\alpha+1}$ for $\alpha>\frac n2$, where $n=2,3$ is the space dimension.  There is also the paper \cite{Medjo1}  which seeks for a special weak solution $(\bu,\bn)$ of \eqref{SELEs} with the unknown $\bn$ being replaced by an angle $\theta$ such that $\bn=(\cos\theta, \sin\theta)$. This model reduction considerably simplify the mathematical analysis of \eqref{SELEs}.

As we mentioned the model \eqref{Eq-DGL}  was proposed in \cite{Lin-Liu} as an approximation of the simplified Ericksen-Leslie system \eqref{1a}.  It is also  widely used in numerical analysis to handle the sphere constraint \eqref{Eq:Sphere-Constraint}
in the Ericksen-Leslie equations, see for instance  \cite{Walkington}.  Hence,   a natural questions which now arises is  to know whether the solutions $(\bue, \bne )$ converge to a solution to the stochastic Ericksen-Leslie equations as $\eps \to 0$. This question is very interesting and has  been the subject of intensive studies in deterministic case. These  studies have generated several important results which were published in \cite{Hong},  \cite{Hong12} and \cite{Feng+Hong+Mei-2020}. These papers are only related to the convergence of smooth solutions solutions. The convergence of the weak solution remains an open questions. Note that an attempt to solve this open problem was done in \cite{Lin-Liu}, but it is not clear whether the limit satisfies \eqref{1a} or not. 

In the case of the stochastic case, it seems that this note is the first analysis presenting a result on the convergence of the solutions  $(\bue, \bne )$  to \eqref{Eq-SGL}. In particular, we show that by studying the convergence the strong solutions of \eqref{Eq-SGL} w can construct martingale, local strong to \eqref{SELEs} with initial data in $\sh^1\times \sh^2$. Note that  strong solution is taken in the sense of PDEs. The result we obtain is not covered in \cite{BHP19}  which considered  the stochastic ELEs with initial data  $(\bv_0, \bd_0)\in \h^{\alpha}\times \h^{\alpha+1}$ for $\alpha>\frac n2$, where $n=2,3$ is the space dimension.  Moreover, the approaches  are completely different.

Let us now close this introduction by giving the layout of this paper. In Section \ref{Sec:2} we introduce the frequently used notations in this manuscript and our main results, see Theorem \ref{thm-main}. The proof of this main theorem is based on careful derivation of estimates uniform in $\eps$ of the solutions $(\bue, \bne)$ to \eqref{Eq-SGL} and the proof of tightness of laws of these solutions on the space $ C([0,T]; \sh \times \sh^1)\cap C_{\text{weak}}([0,T]; \ve\times \sh^2)\cap L^2_{weak}(0,T; D(A) \times \bH^{3})$ and passage to the limits. These steps are very technical and require long and tedious calculation. Hence, in order to save space we only sketch the main steps of the derivations of uniform estimates in Section \ref{Sec:3}. Also, we only  outline the main ideas of the proof of the tightness and the passage to the limits in Section \ref{Sec:4}.

\section{Notations, the stochastic model and our main result}\label{Sec:2}
\subsection{Notations and the stochastic model}

Let us begin with a brief description of the functional setting.

We will us the symbol $\CO$ to  denote the 2D torus $\mathbb{R}^2/(2\pi \mathbb{Z})^2=\mathbb{R}^2/\sim$, where by $\sim$ we understand  the standard  equivalence relation on $\mathbb{R}^2$ defined by $x=(x_1,x_2)\sim y=(y_1,y_2) $ iff there exist $k\in \mathbb{Z}^2$ such that $y=x+2\pi k$.
It is well known that  $\CO$ can be equipped in a natural differentiable structure so that it becomes a compact riemannian manifold (without boundary). Occasionally it is convenient to view $\CO$ as the square $[0,2\pi]^2$ with the sides identified. In particular, the riemannian volume measure on $\CO$ can be identified with  the Lebesgue measure on $[0,2\pi]^2$ and the riemannian distance is equal to the following one
\[
d([x]_\sim,[y]_\sim)= \sqrt{ \sum_{i=1}^2 \min\{ \vert y_i-x_i\vert, \vert y_i-x_i-2\pi \vert\}^2  },\;\; [x]_\sim,[y]_\sim \in \CO,
\]
where the both representatives  $x=(x_1,x_2)$ and  $y=(y_1,y_2) $ of $[x]_\sim$,  and respectively,  of $[y]_\sim$ have been chosen from $[0,2\pi)^2$.

Throughout, we will  use the following notation 
\[		\mathcal{M}=\{ d: \CO \to \mathbb{R}{^3}: \lvert d(x) \rvert=1 \;\;\mathrm{Leb}\text{-a.e.} \}.
\]

All the vector spaces defined on $\CO$ can also be  defined in terms of functions defined on $[0,2\pi]^2$ satisfying appropriate compatibility conditions on the boundary $\partial ([0,2\pi]^2)=[0,2\pi]\times\{0,2\pi\} \cup \{0,2\pi\} \times [0,2\pi] $. We follow here the presentation from \cite{Temam95} and \cite[Chapert VIII, section 4]{VF88}, see also  recent papers \cite{BD20} and \cite{BBM_2012}. In particular,
we denote by $\mathbb{H}^k(\CO)$, for $k\in\mathbb{N}$,  the Sobolev space of all vector fields defined on $\CO$, equivalently all  $\mathbb{R}^2$ valued functions defined on $[0,2\pi]^2$  satisfying  appropriate compatibility conditions
on the boundary $\partial ([0,2\pi]^2)$,  which are weakly differentiable up to order $k$ and those weak derivatives are square integrable. Obviously, $\mathbb{H}^0(\CO)=\mathbb{L}^2(\CO)$.  We denote by $\mathcal{V}$ the space of all $C^\infty$ vector fields defined on $\CO$, equivalently all  $\mathbb{R}^2$ valued functions defined on $[0,2\pi]^2$  satisfying  appropriate compatibility conditions
on the boundary $\partial ([0,2\pi]^2)$, such that $\mathrm{div}\,u = 0 $.  We also put

\begin{equation}
\label{eqn-L^2_0}
\mathbb{L}^2_0 = \left\{u \in \mathbb{L}^2(\CO) : \int_{\CO} u(x)\,dx = 0 \right\}.
\end{equation}

Then, by $\rH$ we define the closure of the space  $\mathcal{V}$  in the space $\mathbb{L}^2_0$ equipped with the norm and scalar product inherited from the latter space. It is known that $\rH$ is equal to
the set $\left\{ u \in \mathbb{L}^2_0 :\mathrm{div}\,u = 0 \right\}$. We also put

\begin{equation}
\label{eqn-V}
\mathrm{V} = \mathbb{H}^1(\CO) \cap \rH,
\end{equation}
equipped with  the norm and scalar product inherited from the  space $ \mathbb{H}^1(\CO)$. It turns out that  $\rV$ can be equipped with another scalar product and norm defined by

\begin{align}
\label{eqn-norm V}
\langle u, v \rangle_\rV &:= \langle \nabla u, \nabla v \rangle_{L^2}, \;\;u,v \in \rV,\\
\Vert u \Vert_{\rV}^2&:= \langle \nabla u, \nabla u \rangle_{L^2}, u \in \rV.
\end{align}
It it known that the original norm  is equivalent to the new one. We will only use the latter.

We denote by $\mathrm{A}$, the Stokes operator defined by
\begin{equation}
\label{eqn-A}
\begin{split}
\mathrm{D}(\mathrm{A}) &= H^2(\CO) \cap \rH\\
\mathrm{A} \colon \mathrm{D}(\mathrm{A}) &\ni u \mapsto - \Pi \left( \Delta u \right) \in  \rH,
\end{split}
\end{equation}
where
\[ \Pi: \mathbf{L}^2(\CO) \to \rH \] is the orthogonal projection called the Leray--Helmholtz projection.
It is known that $\rA$ is a positive, self-adjoint operator  in ${\mathrm{H}}$ with its inverse $A^{-1}$ being compact.  We will use the following norm on the space $D(\rA)$:
\[|u|^2_{\mathrm{D}(\mathrm{A})} :=  |\mathrm{A} u|^2_{L^2}.\]
Obviously $\mathrm{D}(\mathrm{A})$ is a Hilbert space endowed with that  norm (and the corresponding scalar product).
Moreover, it is known that
\begin{equation}
\label{eqn-A^12}
\mathrm{D}(\rA^{1/2}) = \mathrm{V} \quad \mbox{and} \quad \langle Au , u \rangle_{\mathrm{H}} = \|u\|^2_{\mathrm{V}} = |\nabla u|^2_{L^2}, \,\,\, u \in \mathrm{D}(\rA).
\end{equation}
It is also well known (and follows from   \cite[section 2.2]{Temam95}), that $\Pi$ and $\rA$ commute so that  for every $\theta\geq 0$,
\begin{equation}
\label{eqn-Pi}
\mbox{ $\Pi: \mathrm{D}(\rA^{\theta}) \to \mathrm{D}(\mathrm{A}^{\theta})$ is a bounded linear operator}.
\end{equation}

So far we have introduced mostly the functional spaces corresponding to the velocity field. Let us next introduce the spaces corresponding to the director field.
By $\sh^k$, $k\in \mathbb{N}$, we will denote the Sobolev space of all functions $\bn:\CO \to \mathbb{R}^3$, equivalently all  $\mathbb{R}^3$ valued functions defined on $[0,2\pi]^2$  satisfying  appropriate compatibility conditions  on the boundary $\partial ([0,2\pi]^2)$,  which are weakly differentiable up to order $k$ and those weak derivatives are square integrable. It is well known that $\sh^k$ is a Hilbert space.
Let us recall that by the Sobolev embedding theorem, $\sh^k \embed C(\CO)$ iff $k>1$.

We now give few assumptions and notation about the stochastic perturbations.
\begin{assum}\label{Assum:Usual hypotheses}
	Throughout this paper we are given a complete filtered probability space $(\Omega,
	\mathcal{F}, \mathbb{F}, \mathbb{P})$
	with the filtration $\mathbb{F}=\{\mathcal{F}_t: t\geq 0\}$
	satisfying the usual hypothesis, \textit{i.e.},
	the filtration is right-continuous and all null sets of $\mathcal{F}$ are elements of $\mathcal{F}_0$.
\end{assum}
We introduce what we mean by a cylindrical Wiener process in the following definition.
\begin{definition}\label{def-cylindrical Wiener}
	Assume also that  Assumption \ref{Assum:Usual hypotheses} is satisfied and that $\rK$ is a separable Hilbert space with orthonormal basis $\{e_j: j \in \mathbb{N}\}$. By a $\rK$-cylindrical Wiener process we understand a formal series  
	$ W(t)=\sum_{j=1}^\infty w_j(t) e_j, \;\;\; t\geq 0,  $
	where  $w_j=(w_j(t))_{t\geq 0}$, $j\in \mathbb{N}$,  is a sequence of i.i.d. standard Wiener processes defined on  the filtered  probability space $(\Omega,
	\mathcal{F}, \mathbb{F}, \mathbb{P})$. Equivalently, see \cite[Definition 4.1]{BP_2001-Euler}, 
	a $\rK$-cylindrical Wiener process   defined on  the filtered  probability space $(\Omega,
	\mathcal{F}, \mathbb{F}, \mathbb{P})$  we understand a family $W(t)$, $t\ge 0$ of bounded linear
	operators from $\rK$ into  $L^2(\Omega,{\mathcal
		F},\mathbb{P})$ such that:
	\begin{trivlist}
		\item[(i)] for all $t\ge 0$, and $k_1,k_2
		\in \rK$,
		${\mathbb{E}}\, W(t)k_1 W(t)k_2=t\langle k_1,k_2
		\rangle_\rK$,
		\item[(ii)]
		for each $k\in \rK$, $W(t)k$, $t\ge 0$ is a real valued
		$\mathbb{F}$-Wiener process.
	\end{trivlist}
\end{definition}

Now, by projecting the stochastic model \eqref{Eq-SGL} into the space of divergence free function we obtain the following stochastic PDEs with periodic boundary conditions:
\begin{subequations}\label{GL-LC}
	\begin{align}
	&	d\bu+\biggl[A\bu + \Pi_L (\bu\cdot\nabla\bu)\biggr]dt=-\Pi_L\biggl[\Div (\nabla \bn \odot \nabla \bn)\biggr]dt+ dW \label{GL-LC-1}\\
	&	d \bn+(\bu\cdot\nabla)\bn \,dt=[\Delta \bn-\frac{1}{\eps^2}(1-\lvert \bn \rvert^2)\bn]dt+ (\bn \times \bh )\circ \,d\eta \label{GL-LC-3}\\
	& \bu(t=0)=\bu_0 \text{ and } \bn(t=0)=\bn_0,
	\end{align}
\end{subequations}
where we assume that the initial data satisfies
\begin{equation}\label{eqn-n_0} 
\bn_0\in \mathcal{M},
\end{equation}
and $\circ \,d\eta$ denotes the Stratonovich differential.
\subsection{Our main results}

Let us start with some definitions about stopping times. 
\begin{definition}\label{def-accessible stopping time}
	A random function $\tau:\Omega \to [0,\infty]$ is called a stopping time, see \cite[Definition I.2.1]{Kar-Shr-96}, \cite[Definition 4.1]{Metivier_1982} and \cite[section III.5]{Elw_1982}, iff for each $t\geq 0$, the set
	$\{\omega \in \Omega: t< \tau(\omega)\} \in \mathcal{F}_t$ (or equivalently, $\{\omega \in \Omega: \tau(\omega) \leq t\} \in \mathcal{F}_t$).  A  stopping time
	$\tau:\Omega \to [0,\infty]$ is called accessible, see \cite[section 2.1, p. 45]{Kunita-90},  iff there exists an  increasing
	sequence\footnote{In the sense that for all $n\in \mathbb{N}$, $\tau_n \leq
		\tau_{n+1}$,  $\mathbb{P}$-a.s. } of stopping times  $\tau_n{:\Omega \to [0,\infty)}$ such that $\mathbb{P}$-a.s.
	\begin{inparaenum}
		\item[(i)] for all $n\in \mathbb{N}$,  $\tau_n <
		\tau$;
		\item[(ii)] and $\lim_{n\to \infty} \tau_n =\tau$.
	\end{inparaenum}
	
	The sequence   $(\tau_n)_{n\in \mathbb{N}}$   as above is usually called an announcing sequence for $\tau$.
\end{definition}
We now  continue with the definition of a strong solution to \eqref{Eq-SGL}, see \cite{BHP13} and also \cite{BHP20}. 
\begin{definition}\label{def-strong solution} Assume that $\eps>0$ and  $(\bu_0,\bn_0)\in \ve\times \sh^2$   satisfies the constraint condition \eqref{eqn-n_0}.
	Assume also that  Assumption \ref{Assum:Usual hypotheses} is satisfied.
	A   process  $(\bue,\bne): [0,\infty) \to \ve\times \sh^2$  is called a strong solution to the SGL \eqref{GL-LC} iff
	\begin{trivlist}
		\item[(i)] the process $(\bue,\bne)$ is $\ve$-valued continuous and  $\mathbb{F}$-progressively measurable,
		\item[(ii)] there exits an  $D(A)\times \sh^3$-valued $\mathbb{F}$-progressively measurable process $(\bar{\bue},\bar{\bne})$ such that
		\[
		(\bue,\bne)=(\bar{\bue},\bar{\bne}) \mbox{ almost everywhere w.r.t. } \Leb\otimes \mathbb{P};
		\]
		and, $\mathbb{P}$  almost surely, 
		\begin{align}
		&(\bue, \bne)\in C([0,\infty); \ve\times \sh^2)\mbox{ and } (\bar{\bue},\bar{\bne})\in L^2_{\textrm{loc}}([0,\infty); D(A)\times \sh^3);
		\end{align}
		\item[(iii)] for all  $t \in [0,\infty)$,
		\[\lvert \bne(t) \rvert_{\el^\infty}\leq 1,  \;\; \mathbb{P}\mbox{-almost surely},
		\]
		\item[(iv)] for all  $t \in [0,\infty)$, the  following identities hold true in $\rH$ and $\rH^1$ respectively,  $\mathbb{P}$-almost surely,
		\begin{align}
		\bu(t)&=\bu_0  -\int_0^t \biggl[A\bu + \Pi_L (\bu\cdot\nabla\bu)\biggr]\,ds +\Pi_L\biggl[\Div (\nabla \bn \odot \nabla \bn)\biggr]dt+ W(t), \label{GL-LC-1-int}\\
		\bn(t)&=\bn_0 \int_0^t [-(\bu\cdot\nabla)\bn +\Delta \bn-\frac{1}{\eps^2}(1-\lvert \bn \rvert^2)\bn]\,ds+ (\bn \times \bh )\circ d\eta(s). \label{GL-LC-3-int}
		\end{align}
	\end{trivlist}

\end{definition}
We now recall the following result about the existence and uniqueness of a global strong solution to \eqref{Eq-SGL}, see \cite[Theorem 3.17]{BHP20}. Note that the condition (iii) of Definition \ref{def-strong solution} was proved in \cite[Theorem 5.1]{BHP18}.
\begin{thm} \label{thm-BHR}
 Assume that $\mathbf{h}=h(1,1,1)$  where $h\in \rH^2(\CO,\mathbb{R})$.	Assume  that  Assumption \ref{Assum:Usual hypotheses} is satisfied. 
	Assume that $W=(W(t))_{t\geq 0}$ and  $\eta=(\eta(t))_{t\geq 0}$ are respectively $\ve$ and $\mathbb{R}$ valued Wiener processes defined on the filtered  probability space $(\Omega,
	\mathcal{F}, \mathbb{F}, \mathbb{P})$.  Assume finally that 	  $\eps\in (0,1)$.  Then, for every  $(\bu_0,\bn_0)\in \ve\times \sh^2$ there exists a  process  $(\bue,\bne): [0,\infty) \to \ve\times \sh^2$ 
	which is a unique strong solution to \eqref{GL-LC}.
\end{thm}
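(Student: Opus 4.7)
The plan is to construct $(\bue,\bne)$ via a Galerkin approximation, derive $m$-uniform estimates by applying It\^o's formula to a suitable energy, establish tightness to obtain a martingale strong solution, and finally prove pathwise uniqueness so that by the Yamada--Watanabe theorem the solution is strong in the stochastic sense required by Definition \ref{def-strong solution}. First I would rewrite the Stratonovich term in \eqref{GL-LC-3} in It\^o form, picking up the corrector $\tfrac12 (\bn\times \bh)\times \bh$, and then project \eqref{GL-LC-1} onto $\Span\{e_1,\dots,e_m\}$, where $(e_j)$ is the eigenbasis of $\rA$ in $\rH$, and \eqref{GL-LC-3} onto the corresponding $m$-dimensional subspace of $\sh^2$ spanned by eigenfunctions of $-\Delta$ on $\CO$. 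The projected nonlinearities are locally Lipschitz on each bounded set, so a classical truncation argument produces a local strong solution $(\bu^{m,\eps},\bn^{m,\eps})$ up to an explosion time $\tau_m$.

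For the crucial $m$-uniform a priori bounds I would apply It\^o's formula to the Ginzburg--Landau energy
\begin{equation*}
\mathcal{E}^\eps(\bu,\bn) = \tfrac12 |\bu|_{L^2}^2 + \tfrac12 |\nabla \bn|_{L^2}^2 + \tfrac{1}{4\eps^2}\int_{\CO}(1-|\bn|^2)^2\,dx.
\end{equation*}
The decisive cancellation is
\begin{equation*}
\langle -\Pi_L\Div(\nabla \bn\odot\nabla \bn),\bu\rangle + \langle (\bu\cdot\nabla)\bn,\, -\Delta \bn + \eps^{-2}(|\bn|^2-1)\bn\rangle = 0,
\end{equation*}
while a direct computation shows that the It\^o corrector of the rotational noise $(\bn\times \bh)\circ d\eta$ exactly matches the quadratic variation contribution, since $(\bn\times \bh)\cdot \bn\equiv 0$ and $\bn\cdot[(\bn\times \bh)\times \bh] + |\bn\times \bh|^2 = 0$. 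This yields $m$-uniform bounds in $L^p(\Omega; L^\infty_t(\sh\times \sh^1))\cap L^p(\Omega; L^2_t(\ve\times \sh^2))$. To reach the $\ve\times \sh^2$ regularity of Definition \ref{def-strong solution} I would iterate, testing \eqref{GL-LC-1} with $\rA\bu$ and applying It\^o's formula to $|\Delta \bn|_{L^2}^2$, using the 2D Ladyzhenskaya inequality $\|f\|_{L^4}^2\lesssim |f|_{L^2}\|f\|_{H^1}$ and the embedding $\sh^2\embed \el^\infty$ to absorb the cubic and quartic terms coming from $\nabla \bn\odot \nabla \bn$ and $\eps^{-2}(|\bn|^2-1)\bn$. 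A stochastic Gronwall argument after a suitable stopping then gives $\tau_m=\infty$ a.s.\ and uniform bounds.

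The sphere bound $|\bne(t)|_{\el^\infty}\le 1$ in Definition \ref{def-strong solution}(iii) would be obtained by applying It\^o's formula to $\Phi(\bn)= \tfrac14((|\bn|^2-1)_+)^2$: the noise contribution vanishes by the computation above, the Laplacian gives $-|\nabla (|\bn|^2-1)_+|_{L^2}^2\le 0$, and the potential term contributes $\eps^{-2}\int ((|\bn|^2-1)_+)^2 (1+(|\bn|^2-1)_+)\,dx$, so Gronwall applied to $\int \Phi(\bn)\,dx$ starting at $0$ yields $(|\bn|^2-1)_+\equiv 0$, as in \cite[Theorem 5.1]{BHP18}. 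With the uniform bounds in place, tightness of the laws of $(\bu^{m,\eps},\bn^{m,\eps})$ on $C([0,T];\sh\times \sh^1)\cap L^2(0,T;\ve\times \sh^2)$ follows from an Aldous-type equicontinuity estimate on the time increments, and the Jakubowski--Skorokhod representation produces a.s.\ convergent copies whose limit is a martingale solution of \eqref{GL-LC}; the Stratonovich term passes through via its It\^o form plus corrector.

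The main obstacle is \textbf{pathwise uniqueness}, which is needed to promote the martingale solution to the strong solution on the original $(\Omega,\mathcal{F},\mathbb{F},\mathbb{P})$ via Yamada--Watanabe. Given two solutions $(\bu_i,\bn_i)_{i=1,2}$ driven by the same $(W,\eta)$ with the same initial data, I would set $(\bv,\bd)=(\bu_1-\bu_2,\bn_1-\bn_2)$ and exploit the same cancellations as in the energy estimate, together with the pointwise bound $|\bn_i|\le 1$ to control the cubic interaction, to derive
\begin{equation*}
\tfrac{d}{dt}\bigl(|\bv|_{L^2}^2 + |\nabla \bd|_{L^2}^2\bigr) + \|\bv\|_\ve^2 + |\Delta \bd|_{L^2}^2 \le C_\eps\bigl(1+\|\bu_1\|_\ve^2 + \|\bn_2\|_{\sh^2}^2\bigr)\bigl(|\bv|_{L^2}^2 + |\nabla \bd|_{L^2}^2\bigr).
\end{equation*}
Because the bracketed coefficient is a.s.\ in $L^1_t$ by the a priori bounds, a stochastic Gronwall inequality forces $(\bv,\bd)\equiv 0$. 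Combined with the martingale existence, Yamada--Watanabe then delivers the unique global strong solution asserted by the theorem.
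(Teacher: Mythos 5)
Your proposal cannot be matched against a proof in the paper, because the paper does not prove Theorem \ref{thm-BHR} at all: it is quoted verbatim from the earlier work \cite{BHP20} (Theorem 3.17 there), with the pointwise bound in item (iii) of Definition \ref{def-strong solution} imported from \cite[Theorem 5.1]{BHP18}. What you have written is essentially a reconstruction of the strategy of those references — Galerkin approximation, It\^o's formula for the Ginzburg--Landau energy with the cancellation $\langle B(\bu,\bu)+M(\bn),\bu\rangle+\langle \bu\cdot\nabla\bn, f_\eps(\bn)-\Delta\bn\rangle=0$ (which is exactly the identity the present paper records in its sketch of Lemma \ref{Energy estimates I}), higher-order estimates closed by the 2D Ladyzhenskaya inequality for fixed $\eps$, a maximum principle, compactness, and Yamada--Watanabe. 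That roadmap is sound and is, in substance, how the cited papers proceed.

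Two points deserve tightening. First, the It\^o corrector of the rotational noise does \emph{not} cancel the quadratic-variation contribution exactly: your identities $\bn\cdot(\bn\times\bh)=0$ and $\bn\cdot[(\bn\times\bh)\times\bh]+|\bn\times\bh|^2=0$ handle the pointwise and potential parts, but for the Dirichlet energy there is a genuine leftover, namely $\lvert\nabla(\bn\times\bh)\rvert^2_{\el^2}+\langle\nabla\bn,\nabla([\bn\times\bh]\times\bh)\rangle=\lvert\bn\times\nabla\bh\rvert^2_{\el^2}$, which is why the term $\int_s^t\lvert\bn^\eps\times\nabla\bh\rvert^2_{\el^2}\,dr$ appears in the energy identity \eqref{Eq:NRJ-IDENT}; it is harmless (controlled by $\lvert\bh\rvert_{\sh^2}$) but must be carried. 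Second, the maximum principle argument via $((|\bn|^2-1)_+)^2$ cannot be run at the Galerkin level, since the finite-dimensional projection destroys the pointwise orthogonality structure you rely on; it is available only for the limit solution. Your plan survives this because your higher-order estimates invoke $\sh^2\embed\el^\infty$ rather than $|\bn^{m,\eps}|\le1$, and the bound $|\bn_i|\le1$ is used only in the uniqueness step where both objects are genuine solutions — but you should state this ordering explicitly. (Also note that on $\{|\bn|>1\}$ the potential term in that computation enters with a dissipative sign, which makes the conclusion immediate without needing a Gronwall coefficient depending on $\lvert\bn\rvert_{\el^\infty}$.)
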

A natural questions which now arises is  to know whether the solutions $(\bue, \bne )$ converge to a solution to the stochastic Ericksen-Leslie equations as $\eps \to 0$.  This is the subject of the present paper and it seems that this note is the first analysis presenting a result on the convergence of the solutions   $(\bue, \bne )$  to the SGL. In particular, we obtained the following result. 

\begin{thm}\label{thm-main} Assume that $\rK$ is a separable Hilbert space such that the embedding $\rK \embed \ve$ is Hilbert-Schmidt.  Assume that $\mathbf{h}=h(1,1,1)$  where $h\in \rH^2(\CO,\mathbb{R})$.
	
	There exist a filtered complete probability space $(\Omega, \mathcal{F}, (\mathscr{F}_t)_{t\ge 0}, \mathbb{P})$,  a finite stopping $\tau>0$,  a $\mathrm{K}\times \mathbb{R}$-cylindrical Wiener process $(\tilde{W}, \tilde{\eta})$,  $(\bu,\bn), (\bu^\eps, \bn^\eps): [0,\tau] \to \ve\times \mathsf{H}^2$,
	such that
	$$ (\bue, \bne) \to (\bu,\bn) \text{ a.s. in } C([0,\tau]; D(\stok^{\frac{\alpha-1}2 })\times \mathsf{H}^{\alpha })\cap L^2(0,\tau; D(\stok^\frac{\alpha}{2})\times \bH^{1+\alpha}), \quad \alpha \in [1, 2),$$
	for all $t\in [0,T]$, a.s. $	\bn(t) \in \mathcal{M}$ 
	\begin{align*}
	\bu(t\wedge \tau)-\bu_0= \tilde{W}(t\wedge \tau)-\int_0^{t\wedge \tau}\left(A \bu  +\Pi_L[\bu\cdot \nabla \bu+ \Div(\nabla \bn \odot \bn)]  \right) ds\\
	\bn(t\wedge \tau)-\bn_0=  \int_0^{t\wedge \tau}\left(\Delta \bn  +\lvert \nabla \bn \rvert^2 \bn-\bu\cdot \nabla \bn   \right) ds+\int_0^{t\wedge \tau }(\bn \times \bh)d\tilde{\eta} .
	\end{align*}
\end{thm}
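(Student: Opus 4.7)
The plan is to execute the standard stochastic compactness programme on the solutions $(\bue,\bne)$ supplied by Theorem~\ref{thm-BHR}: derive $\eps$-uniform a priori estimates, prove tightness of their laws on a suitable path space, pass to a new probability space via a Jakubowski-type Skorokhod representation to obtain almost sure convergence, and finally identify the limit as a martingale local strong solution of \eqref{SELEs}. Because of the supercritical cubic coupling, all higher-order bounds will be valid only up to a family of localising stopping times $\tau_R\nearrow\tau$.

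For the uniform estimates I would first apply It\^o's formula to the Ginzburg--Landau energy
\[
E^\eps(t)=\tfrac12|\bue|_{L^2}^2+\tfrac12|\nabla \bne|_{L^2}^2+\tfrac{1}{4\eps^2}|1-|\bne|^2|_{L^2}^2,
\]
obtaining, uniformly in $\eps$, moment bounds on $(\bue,\bne)$ in $L^\infty(0,T;\rH\times\sh^1)\cap L^2(0,T;\ve\times\sh^2)$ and, crucially, a uniform bound on $\eps^{-1}(1-|\bne|^2)$ in $L^\infty_t L^2_x$; this last quantity is what will ultimately enforce the sphere constraint in the limit. Higher-order estimates obtained by testing the velocity equation against $\stok\bue$ and the director equation against $-\Delta \bne$, combined with a careful It\^o analysis of the Stratonovich term $(\bne\times\bh)\circ d\eta$ (whose It\^o correction is bounded uniformly in $\eps$), then yield $\eps$-uniform bounds in $L^\infty(0,\tau_R;\ve\times\sh^2)\cap L^2(0,\tau_R;\mathrm{D}(\stok)\times\sh^3)$. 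The pointwise bound $|\bne|\leq 1$ from Theorem~\ref{thm-BHR} is essential to close these estimates by absorbing the otherwise singular cubic terms.

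Tightness of the laws of $(\bue,\bne,W,\eta)$ on
\[
\bigl(C([0,\tau];\rH\times\sh^1)\cap C_w([0,\tau];\ve\times\sh^2)\cap L^2_w(0,\tau;\mathrm{D}(\stok)\times\sh^3)\bigr)\times C([0,\tau];\rK\times\mathbb{R})
\]
follows by combining these uniform estimates with a Flandoli--G\k{a}tarek / Aldous equicontinuity argument for the time increments of the drift and martingale parts. Since the path space is not metrisable, I would invoke Jakubowski's generalisation of the Skorokhod theorem to obtain a new filtered probability space carrying $(\bu,\bn,\tilde{W},\tilde{\eta})$ together with an a.s.\ convergent subsequence; interpolation between the strong $C([0,\tau];\rH\times\sh^1)$ convergence and the weak $C_w([0,\tau];\ve\times\sh^2)$ convergence, together with the compact embeddings $\ve\embed\mathrm{D}(\stok^{(\alpha-1)/2})$ and $\sh^2\embed\sh^\alpha$ for $\alpha\in[1,2)$, then upgrades the mode of convergence to that stated in the theorem. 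The constraint $\bn(t)\in\mathcal{M}$ is an immediate consequence of this a.s.\ strong convergence combined with the uniform $L^\infty_t L^2_x$ bound on $\eps^{-1}(1-|\bne|^2)$.

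The identification of the limit equation is the real obstacle, since the Ginzburg--Landau penalty $\eps^{-2}(1-|\bne|^2)\bne$ is not expected to converge in any function space. The key algebraic observation is that this term is parallel to $\bne$ and therefore is annihilated by taking the cross product of the director equation with $\bne$. Testing the weak formulation against vectors of the form $\bne\times\phi$, where $\phi$ is an admissible test function, eliminates the singular penalty, and the remaining terms pass to the limit thanks to the strong convergence established above, yielding in the limit
\[
\bn\times d\bn+\bn\times(\bu\cdot\nabla \bn)\,dt=(\bn\times\Delta \bn)\,dt+\bn\times(\bn\times\bh)\circ d\tilde{\eta}.
\]
Applying $-\bn\times$ once more and invoking the sphere identities $\bn\cdot d\bn=0$, $\bn\cdot(\bu\cdot\nabla \bn)=\tfrac12 \bu\cdot\nabla|\bn|^2=0$, $\bn\cdot\Delta\bn=-|\nabla \bn|^2$, together with $-\bn\times\bigl(\bn\times(\bn\times\bh)\bigr)=\bn\times\bh$, recovers the full Ericksen--Leslie director equation with $|\nabla\bn|^2\bn$ replacing the singular potential. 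Passage to the limit in the Stratonovich integral is handled by converting first to It\^o form, using strong convergence of the integrands, and then converting back. By contrast the velocity equation passes to the limit by standard continuity arguments, since the established mode of convergence controls both $(\bue\cdot\nabla)\bue$ and $\Div(\nabla\bne\odot\nabla\bne)$.
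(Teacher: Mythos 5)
Your overall scheme (Ginzburg--Landau energy estimate, higher-order bounds up to stopping times, tightness, Jakubowski--Skorokhod, and identification of the director equation by taking the cross product with $\bn$ to annihilate the penalty) coincides with the paper's, and the identification step in particular is exactly the argument used there (via the equivalence of \eqref{Eq-UsualFrom of Eq Dir} and \eqref{Eq-AltFrom of Eq Dir}, following Alouges--de Bouard--Hocquet). However, there is a genuine gap in your derivation of the $\eps$-uniform $\ve\times\sh^2$ bounds. You assert that the pointwise bound $\lvert\bne\rvert\le 1$ is what allows you to ``absorb the otherwise singular cubic terms.'' It is not. The problematic term in the It\^o expansion of $\tfrac12\lvert\Delta\bne\rvert^2_{\el^2}$ is $\langle\Delta\bne,\Delta f_\eps(\bne)\rangle=\eps^{-2}\langle\Delta^2\bne,(1-\lvert\bne\rvert^2)\bne\rangle$, and after integration by parts this produces, among others, $\eps^{-2}\int(1-\lvert\bne\rvert^2)\,\nabla\bne\cdot\nabla\Delta\bne$. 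To control this one must convert the factor $\eps^{-2}(1-\lvert\bne\rvert^2)$ into $f_\eps(\bne)=\eps^{-2}(1-\lvert\bne\rvert^2)\bne$, which requires the \emph{lower} bound $\lvert\bne\rvert\ge\tfrac12$ (see \eqref{Eq:DerFeps} in Lemma \ref{Lem:Alpha3}); the upper bound is useless here. The paper is explicit (Remark \ref{rem-Hong}) that Hong's deterministic lower bound is not available in the stochastic setting, and this is precisely why the stopping time $\sigma_2^\eps$ (first time $\sup_x\lvert\bne\rvert$ drops to $\tfrac12$) is introduced. Your vague family $\tau_R\nearrow\tau$ does not encode this constraint, so your higher-order estimate does not close as written.

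A second, related omission: the quartic term $\lvert\nabla\bne\rvert^4_{\el^4}$ that multiplies $\lvert\Delta\bne\rvert^2_{\el^2}$ in the higher-order estimates cannot be handled by a naive Gronwall argument; the paper absorbs it through the Ladyzhenskaya--Struwe inequality, which requires the \emph{local} energy $\sup_x\int_{B(x,R)}(\lvert\bue\rvert^2+\lvert\nabla\bne\rvert^2+F_\eps(\bne))$ to stay below a small threshold $\delta<1/(8c_0)$ --- this is the second stopping time $\sigma_1^\eps(\delta,R)$ --- and then a stochastic Gronwall with the exponential weight $\Phi_1$ together with exponential moment bounds (Lemma \ref{Exponential Estimates}). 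Neither ingredient appears in your proposal. Two smaller points: to obtain $L^\infty_t\sh^2$ control of the director you must work with $\tfrac12\lvert\Delta\bne\rvert^2_{\el^2}$ (i.e.\ test against $\Delta^2\bne$), not ``against $-\Delta\bne$'' as you write, which only yields $\sh^1$; and for the tightness/limit step the paper does not pass to the limit in the stopped processes directly but first extends them to global-in-time solutions of an auxiliary linear problem with truncated coefficients and includes $\sigma^\eps$ itself as a component of the tight family --- a device you would need (or an equivalent one) to make sense of convergence of random intervals $[0,\sigma^\eps]$ to $[0,\tau]$.
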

\begin{rem}\label{rem-Hong}
	In the deterministic case, Hong \cite{Hong} proved that
	\begin{equation}
	\frac12 \leq \lvert \bn^\eps(t,x) \rvert_{\mathbb{R}^3}\leq  1, \mbox{ $\forall t\in [0,T)$ a.e. $x$.}
	\end{equation}	
	This is important to handle $\frac{1}{\eps^2}(1-\lvert \bn^\eps\rvert^2)\bn^\eps$ when one wants to study the convergence of the deterministic Ginzburg-Landau approximation. Unfortunately, we do not know how to prove it in the stochastic case. Therefore, we will need to devise an unusual technique.
\end{rem}

The proof of this theorem follows the standard scheme of deriving uniform a priori estimates, establishing tightness in a appropriate spaces, using the famous Jakubowski-Skorokhod representation theorem to pass to the limit. However, the steps of this scheme are quite difficult due to the non-parabolicity of the limiting equations. Moreover, these steps involve long and tedious calculations. Therefore, in order to save space we only give a sketch of the main ideas of the proof of the above theorem.

\begin{rem}\label{rem-improve}
	Note that the previous results obtained in \cite{BHP19} only give the existence of a  local solution $(\bu,\bn): [0,\tau] \to D(A^\frac\alpha2) \times \sh^{1+\alpha}$ whenever $(\bu_0,\bn_0)\in D(A^\frac\alpha2)\times \sh^{1+\alpha}$, $\alpha>\frac d2$, $d=2,3$. Hence, the present note improves the  results  from that paper.
\end{rem}

Throughout, we put
$$ f_\eps(\bn)= \frac1{\eps^2}(1-\lvert \bn \rvert^2)\bn \text{ and } F_\eps(\bn)=\frac{1}{4\eps^2}(1-\lvert \bn \rvert^2)^2.$$

\section{Ideas of the proof of Theorem \ref{thm-main}: Uniform estimates}\label{Sec:3}
\label{sec-proof-part 1}
As mentioned the proof of Theorem consists in deriving uniform estimates, proving tightness results and passage to the limit. In this section we concentrate on the first part, i.e. uniform estimates. In the following
section we will deal with the second and third parts.

In what follows we choose and fix   a separable Hilbert space  $\rK$   such that the embedding $\rK \embed \ve $ is Hilbert-Schmidt. We also assume that Assumptions of Theorem  \ref{thm-BHR}
are satisfied, i.e.  we assume   Assumption \ref{Assum:Usual hypotheses} and that
 $W=(W(t))_{t\geq 0}$ and  $\eta=(\eta(t))_{t\geq 0}$ are respectively $\ve$ and $\mathbb{R}$ valued Wiener processes defined on the filtered  probability space $(\Omega,
		\mathcal{F}, \mathbb{F}, \mathbb{P})$, and   $(\bu_0,\bn_0)\in \ve\times \sh^2$.
We denote by $Q\in \mathscr{L}(\ve)$ the covariance operator of $W$. Here $\mathscr{L}(\ve)$ is the space of all bounded linear maps from $\ve$ into itself.

In this section
we also  fix   	  $\eps\in (0,1]$  denote  by   $(\bue,\bne): [0,\infty) \to \ve\times \sh^2$
the  unique strong solution to the SGL \eqref{GL-LC} guaranteed by Theorem  \ref{thm-BHR}.  Since we want to prove the existence of a local solution, we fix for the remainder of this section a finite time horizon $T>0$.
In all our results below we will find estimates independent of $\eps$. The constants will depend on both the initial data as well as on $T$ but we will not make this dependence explicit.

%
The first estimates we get are given in the following lemma.
\begin{lem}\label{Energy estimates I}
	For any $p\in \mathbb{N}$ there exists a constant $K_0(p)>0$, independent of  $\eps\in (0,1)$, such that
	\begin{align}
	\mathbb{E} \sup_{t\in [0,T)}\left(\lvert \bue(t) \rvert^2_{L^2} + \lvert \nabla \bne(t)\rvert^2_{L^2}+ \lvert F_\eps(\bne(t)) \rvert_{L^1}  \right)^p \leq K_0(p),\\
	\mathbb{E} \left(\int_0^T [\lvert \nabla \bue(t) \rvert^2 + \lvert \Delta \bne(t) + f_\eps(\bne(t)\rvert^2_{L^2} ] dt \right)^p\leq K_0(p).
	\end{align}

\end{lem}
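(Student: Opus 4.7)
The natural Lyapunov functional for this Ginzburg--Landau approximation is
$$\Phi(\bu,\bn):=\tfrac{1}{2}\lvert\bu\rvert_{L^2}^2+\tfrac{1}{2}\lvert\nabla\bn\rvert_{L^2}^2+\int_\CO F_\eps(\bn)\,dx,$$
and my plan is to apply It\^o's formula first to $\Phi(\bue,\bne)$ and then to $(1+\Phi(\bue,\bne))^p$, invoking the $C([0,T];\ve\times\sh^2)\cap L^2(0,T;D(\rA)\times\sh^3)$ regularity granted by Theorem \ref{thm-BHR} to legitimise the computation. The final estimate is to come out of the Burkholder--Davis--Gundy inequality on the two martingale parts, Young's inequality to absorb top-order terms, and Gronwall's lemma. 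The hardest aspect, already emphasised in Remark \ref{rem-Hong}, is that no deterministic-type pointwise bound on $\bne$ is available, so every correction produced by the two noises must be absorbed by $\Phi$ itself, with constants independent of $\eps\in(0,1]$.

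\textbf{Drift part.} Two cancellations drive the cleanness of the drift identity. Since $\bue$ is divergence free, integration by parts gives
$$\int_\CO(\nabla\bn\odot\nabla\bn):\nabla\bu\,dx+\int_\CO\Delta\bn\cdot(\bu\cdot\nabla\bn)\,dx=0,$$
so the coupling $\divv(\nabla\bne\odot\nabla\bne)$ in the Stokes equation is killed by the convective term in the director equation when the latter is tested against $-\Delta\bne$. Because $\nabla_\bn F_\eps=-f_\eps$, one also has $\int_\CO f_\eps(\bn)\cdot(\bu\cdot\nabla\bn)\,dx=-\int_\CO\bu\cdot\nabla F_\eps(\bn)\,dx=0$. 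Completing the perfect square $\lvert\Delta\bn\rvert^2+2\Delta\bn\cdot f_\eps+\lvert f_\eps\rvert^2=\lvert\Delta\bn+f_\eps\rvert^2$, I obtain the formal identity
$$d\Phi(\bue,\bne)+\bigl(\lvert\nabla\bue\rvert_{L^2}^2+\lvert\Delta\bne+f_\eps(\bne)\rvert_{L^2}^2\bigr)\,dt=\text{(noise corrections)},$$
whose two dissipative terms match exactly the two norms announced in the statement.

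\textbf{Noise part and closure.} For the additive cylindrical Wiener process $W$ the It\^o correction is $\tfrac{1}{2}\tr_\rH Q$, which is finite because $\rK\embed\ve\embed\rH$ is Hilbert--Schmidt. For the Stratonovich term $(\bn\times\bh)\circ d\eta$ I would convert to It\^o form using
$$(\bn\times\bh)\times\bh=(\bn\cdot\bh)\bh-\lvert\bh\rvert^2\bn,$$
and exploit the crucial pointwise orthogonality $(\bn\times\bh)\perp\bn$: the noise flow then preserves $\lvert\bn\rvert$ pointwise and hence leaves $\int F_\eps(\bn)\,dx$ invariant, so only the $\tfrac{1}{2}\lvert\nabla\bne\rvert_{L^2}^2$ component contributes to the noise correction. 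A direct computation shows that this correction is dominated by $\int_\CO(\lvert\bh\rvert^2\lvert\nabla\bn\rvert^2+\lvert\nabla\bh\rvert^2\lvert\bn\rvert^2)\,dx$, which under the hypothesis $\bh=h(1,1,1)$, $h\in\rH^2(\CO,\mathbb{R})$, is bounded by $C(1+\lvert\nabla\bn\rvert_{L^2}^2+\lvert\bn\rvert_{L^2}^2)$ via the 2D Sobolev embedding and the Gagliardo--Nirenberg inequality. The last summand is already controlled by $\Phi$ uniformly in $\eps$: the identity $\int_\CO(1-\lvert\bn\rvert^2)^2\,dx=4\eps^2\lvert F_\eps(\bn)\rvert_{L^1}$ combined with Cauchy--Schwarz gives
$$\Bigl|\lvert\bn\rvert_{L^2}^2-\lvert\CO\rvert\Bigr|\leq 2\eps\lvert\CO\rvert^{1/2}\lvert F_\eps(\bn)\rvert_{L^1}^{1/2},$$
whence $\lvert\bn\rvert_{L^2}^2\leq C(1+\lvert F_\eps(\bn)\rvert_{L^1}^{1/2})\leq C(1+\Phi)$ on $\eps\in(0,1]$, and every noise correction fits the pattern $C(1+\Phi)$. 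Inserting these bounds into It\^o's formula for $(1+\Phi(\bue,\bne))^p$, using BDG on the two stochastic integrals, Young's inequality to absorb the resulting top-order quadratic variation and Gronwall's lemma delivers the first estimate; the second follows by keeping the dissipation on the left-hand side of the differential inequality before raising to the $p$-th power.
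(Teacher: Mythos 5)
Your proposal is correct and follows essentially the same route as the paper: It\^o's formula applied to the energy functional $\tfrac12\lvert\bu\rvert_{L^2}^2+\tfrac12\lvert\nabla\bn\rvert_{L^2}^2+\lvert F_\eps(\bn)\rvert_{L^1}$, the same two convective cancellations, the orthogonality $(\bn\times\bh)\perp\bn$ (hence $\perp f_\eps(\bn)$) to kill the $F_\eps$ contribution of the noise, and then $p$-th moments via BDG, Young and Gronwall. The only cosmetic difference is that the paper uses the exact identity $\lvert\nabla(\bn\times\bh)\rvert_{\el^2}^2+\langle\nabla\bn,\nabla([\bn\times\bh]\times\bh)\rangle=\lvert\bn\times\nabla\bh\rvert_{\el^2}^2$ where you use a slightly looser upper bound, which still closes the Gronwall argument.
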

\begin{proof}[Sketch of the proof of Lemma \ref{Energy estimates I}]

The application of the It\^o Lemma \cite{Pardoux_1979} to the functional $ \Gamma_1(\bu)+\Gamma_2(\bn)$, where

		\begin{equation*}
		\Gamma_1(\bu)=\frac12 \lvert \bu \rvert^2_{\el^2}\text{ and }
		\Gamma_2(\bn)= \frac12 \lvert \nabla \bn \rvert^2_{\el^2}+ \frac{1}{4\eps^2}\int_\MO \left[ 1- \lvert \bn \rvert^2\right]^2 dx,
		\end{equation*}
		the use of the fact $f_\eps (\bne) \perp \bne\times \bh$ and
		\begin{equation}
		\langle B(\bue,\bue) + M(\bne), \bue\rangle +\langle \bue\cdot \nabla \bne, f_\eps(\bne) -\Delta \bne\rangle =0,
		\end{equation}
	and the use of the elementary equality	
			$$ \lvert \nabla (\bn^\eps\times \bh ) \rvert^2_{\el^2} +\langle \nabla \bn^\eps, \nabla\left([\bn^\eps \times \bh] \times \bh \right)\rangle=\lvert \bne\times \nabla \bh\rvert^2_{\el^2},$$
		yield the following energy equality which is the basis if the proof of the lemma:
			\begin{equation}\label{Eq:NRJ-IDENT}
			\begin{split}
			& \NRJ{\bu^\eps,\bn^\eps}{t}+2 \int_s^{t }\ENST{\bu^\eps,\bn^\eps}{r} dr=\NRJ{\bu^\eps,\bn^\eps}{s} + \lvert Q \rvert^2_{\mathscr{L}(\ve)}(t-s)\\
			& \qquad + 2   \int_s^{t} \langle   \nabla  \bn^\eps , (\bn^\eps \times \nabla \bh) d\eta  \rangle + 2 \int_s^{t} \langle \bu^\eps, dW \rangle\\
			& \qquad +  \int_s^{t} \lvert (\bn^\eps\times \nabla  \bh ) \rvert^2_{\el^2} dr .
			\end{split}
			\end{equation}
			where
			\begin{align*}
			\NRJ{\bu,\bn}{t}=\int_{\CO} \left[\lvert \bu (t,y)\rvert^2 + \lvert \nabla \bn(t,y) \rvert^2\right] dy+\int_\CO F_\eps(\bn(t,y)) dy,\\
			\ENST{\bu^\eps,\bn^\eps}{t}= \lvert \nabla \bue(t) \rvert^2_{L^2} + \lvert \Delta \bne(t) + f_\eps(\bne(t) ) \rvert^2_{L^2}.
			\end{align*}
			Once we have this energy estimate we can refine the approach in \cite{BHP18} to obtain the estimates in Lemma \ref{Energy estimates I}.
\end{proof}
The above lemma give two natural and important uniform estimates, but they are not sufficient for our purpose. We need to derive uniform estimates in the space
 $C([0,T]; \ve\times \sh^2 ) \cap L^2([0,T]; D(A) \times \sh^3)$.  In order to derive such estimates let us define an important stopping time.

 Let $\delta, R>0$,
$$  \NRJR{R}{\bu,\bn}{t,x}=\int_{B(x,R)} \left[\lvert \bu (t,y)\rvert^2 + \lvert \nabla \bn(t,y) \rvert^2+F_\eps(\bn(t,y)) \right] dy,$$
and define the following three $\mathbb{F}$-stopping times
\begin{align}
\sigma^\eps_1(R)&:=\sigma_1^\eps(\delta,R) = \inf\left\{t\in [0,\infty) : \sup_{x\in \MO} \NRJR{R}{\bue,\bne}{t,x}\ge \delta   \right\} \wedge T,\\
 \sigma_2^\eps &= \inf\left\{ t\in [0,\infty) : \sup_{x\in \MO}\lvert \bne (t,x) \rvert \leq \frac12    \right\}\wedge T,\\
\sigma^\eps(R)&= \sigma_1^\eps(R) \wedge \sigma_2^\eps.
\end{align}

We will now use this stopping time to derive uniform estimates in $C([0,T]; \ve\times \sh^2 ) \cap L^2([0,T]; D(A) \times \sh^3)$ for the stopped processes $(\bu^\eps(\cdot\wedge \sigma^\eps(R)), \bn^\eps(\cdot \wedge \sigma^\eps(R)) )$ fo appropriate choice of $R$. This is motivated by  the theory from the deterministic case which shows that uniform estimates in $C([0,T]; \ve\times \sh^2 ) \cap L^2([0,T]; D(A) \times \sh^3)$  hold if  the energy remains small and $\lvert \bn(t) \rvert_{L^\infty}$ does not enter the ball $B(0,\frac12)$, see \cite[Eq. (3.3)]{Hong}.

Hereafter, we set $\MO_t=[0,t]\times\MO$, $t>0$ and recall the following important lemma, see \cite[Lemma 3.1]{Struwe_1985}. Note that Struwe proved his result on a general compact riemannian manifold and hence his result is valid in our case of a compact 2D torus.

\begin{lem}[The Ladyzhenskaya-Struwe inequality]
	There exist constant $c_0>0$ and $r_1>0$, independent of $\eps\in (0,1]$, such that for every $R\in(0, r_1]$ the following inequality holds
	\begin{equation}
	\begin{split}
	 \lvert\nabla  \bne(t,x) \rvert^4_{L^4(\MO_t)} \leq c_0\left(\sup_{(s,x)\in [0,t] \times \MO } \int_{B(x,R)} \lvert \nabla \bne(s,y)\rvert^2 dy\right) \\
	\times 	\left( \lvert \Delta \bne \rvert^2_{L^2(\MO_t)} + R^{-2} \lvert \nabla \bne \rvert^2_{L^2(\MO_t)} \right).
	\end{split}
	\end{equation}
\end{lem}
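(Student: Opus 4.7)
The plan is to follow the standard localized Ladyzhenskaya (Gagliardo--Nirenberg) argument, as in Struwe's Lemma 3.1 \cite{Struwe_1985}, adapted to the compact flat 2D torus $\CO$. Three ingredients are needed: (i) an $L^4$ estimate on a single ball of radius $R$, (ii) a finite-overlap cover of $\CO$ to globalize, and (iii) the periodic identity $\|\nabla^2 f\|_{L^2(\CO)}=\|\Delta f\|_{L^2(\CO)}$ to pass from the full Hessian on the right-hand side to the Laplacian as in the statement.

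For step (i), fix $r_1>0$ small enough that every geodesic ball $B(x_0,R)$ with $R\in(0,r_1]$ sits inside a normal chart on which the Riemannian and Euclidean metrics are uniformly comparable. For $w\in H^1(B(x_0,R))$, pick a cutoff $\psi\in C_c^\infty(B(x_0,R))$ with $\psi\equiv 1$ on $B(x_0,R/2)$, $0\le\psi\le 1$ and $\lvert \nabla\psi\rvert\le C/R$; extend $\psi w$ by zero to $\mathbb{R}^2$ and apply the standard 2D Ladyzhenskaya inequality $\|u\|_{L^4(\mathbb{R}^2)}^4\le C\|u\|_{L^2}^2\|\nabla u\|_{L^2}^2$ with $u=\psi w$. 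Combined with the Leibniz bound $\lvert \nabla(\psi w)\rvert \le \lvert \nabla w\rvert+(C/R)\lvert w\rvert$, this yields
\[
\int_{B(x_0,R/2)}\!\lvert w\rvert^4\,dy \;\le\; C \Bigl(\int_{B(x_0,R)}\!\lvert w\rvert^2\,dy\Bigr)\Bigl(\int_{B(x_0,R)}\!(\lvert \nabla w\rvert^2+R^{-2}\lvert w\rvert^2)\,dy\Bigr),
\]
with $C$ depending only on the cutoff and the chart-comparison constants.

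For step (ii), cover $\CO$ by finitely many balls $\{B(x_i,R/2)\}_{i\in I_R}$ whose doubled balls $\{B(x_i,R)\}_{i\in I_R}$ have bounded overlap uniformly in $R\in(0,r_1]$ (a standard Vitali-type construction on a compact manifold). Summing the local estimate over $i$, bounding each factor $\int_{B(x_i,R)}\lvert w\rvert^2$ by the spatial supremum, and using the finite-overlap property to re-collect the remaining sum into a single integral over $\CO$, we obtain the instantaneous global estimate applied to $w(s,\cdot)=\nabla\bne(s,\cdot)$. Integrating $ds$ over $[0,t]$ and pulling the spatial supremum outside the time integral then produces the spatio-temporal $\sup_{(s,x)\in[0,t]\times\CO}$ on the right-hand side. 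Finally, the identity
\[
\int_{\CO}\lvert \nabla^2 \bne\rvert^2\,dy \;=\; \int_{\CO}\lvert \Delta \bne\rvert^2\,dy,
\]
proved component-wise by integrating $(\partial_{ij} n^\eps_k)^2$ by parts twice with no boundary terms on $\CO$, replaces the Hessian by the Laplacian and completes the inequality with a constant $c_0$ as claimed.

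The main obstacle is geometric/combinatorial rather than analytic: to obtain a constant $c_0$ independent of both $\eps\in(0,1]$ and $R\in(0,r_1]$, one must fix the smallness threshold $r_1$ purely in terms of the injectivity radius and metric-comparison constants of the flat torus, and verify that the overlap multiplicity of the cover $\{B(x_i,R)\}$ does not degenerate as $R\downarrow 0$. Once these geometric parameters are pinned down, $\eps$-independence is automatic, since the inequality involves $\eps$ only through $\bne$ itself and nowhere in the constants.
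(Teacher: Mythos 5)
Your argument is correct and is essentially the proof the paper relies on: the paper does not prove this lemma itself but simply recalls it as \cite[Lemma 3.1]{Struwe_1985}, remarking that Struwe's result on compact Riemannian surfaces applies to the torus, and Struwe's own proof is precisely your combination of a cutoff-localized 2D Ladyzhenskaya inequality, a finite-overlap cover with $R$-independent multiplicity, and (on the flat torus) the integration-by-parts identity $\lvert \nabla^2 \bne\rvert^2_{L^2(\CO)}=\lvert \Delta \bne\rvert^2_{L^2(\CO)}$ replacing the Hessian by the Laplacian. The only simplification you correctly exploit is that the torus is flat, so the chart-comparison constants are trivial and no curvature correction enters the Hessian--Laplacian identity; the $\eps$-independence is, as you note, automatic since $\eps$ enters only through $\bne$.
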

	\begin{rem}\label{Rem:Positivity-Approx}
Since $\mathbb{P}$ almost surely the  $(\bue, \bne):[0,T] \to \ve\times \sh^2$ is continuous,   for any $\delta\in (0, 1/8c_0)$  one can find  $r_0>0$ such that for any $R\leq r_0$
		$$\sigma^\eps(R)>0 \text{ a.s. }. $$
	\end{rem}
	
Hereafter, we set
\begin{align}
& R_0=r_1\wedge r_0,\\
\sigma^\eps=\sigma^\eps(R), \;\; \text{ for a fixed } R\in (0, R_0].
\end{align}


	\begin{lem}\label{Energy Estimates IB}
		Let $\delta\in (0, 1/8c_0)$, $p \in \mathbb{N}$ $r_1$ and $r_0$ as  in Remark \ref{Rem:Positivity-Approx}. Let $R_0=r_1\wedge r_0$ and for $R\in (0, R_0]$ we set $\sigma^\eps=\sigma^\eps(R)$.
				 Then, there exists a constant $K_1(p)>0$  independent of $\eps\in (0,1]$, such that
		\begin{equation}\label{Eq-Import-maxreg}
		\mathbb{E}\left(	\int_0^{\sigma^\eps}\lvert \Delta \bne \rvert^2_{\el^2} ds + \frac1{8\eps^4} \int_0^{\sigma^\eps} \lvert 1- \lvert \bne\rvert^2 \rvert^2_{\el^2} ds + \int_0^{\sigma^\eps} \lvert \nabla \lvert \bne\rvert^2 \rvert^2_{\el^2} ds\right)^p \leq K_1(p).
		\end{equation}
	\end{lem}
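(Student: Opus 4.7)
The strategy is to decompose the dissipation $\lvert \Delta \bne + f_\eps(\bne)\rvert^2_{L^2}$, whose $L^p_\omega$ norm is already controlled uniformly in $\eps$ by Lemma \ref{Energy estimates I}, into the three target quantities $\lvert \Delta \bne\rvert^2_{L^2}$, $\frac1{\eps^2}\lvert \nabla \lvert \bne \rvert^2 \rvert^2_{L^2}$ and $\frac1{\eps^4}\lvert 1-\lvert \bne\rvert^2\rvert^2_{L^2}$. Expanding the square, integrating by parts on the torus (no boundary terms), and using the identity $\nabla \lvert \bne\rvert^2 = 2\bne\cdot \nabla \bne$ to compute $\langle \Delta \bne, f_\eps(\bne)\rangle_{L^2}$, one obtains the pointwise-in-time algebraic identity
\begin{equation*}
\lvert \Delta \bne\rvert^2_{L^2} + \frac{1}{\eps^2}\lvert \nabla \lvert \bne\rvert^2\rvert^2_{L^2} + \frac{1}{\eps^4}\int_\CO (1-\lvert \bne\rvert^2)^2 \lvert \bne\rvert^2\, dx = \lvert \Delta \bne + f_\eps(\bne)\rvert^2_{L^2} + \frac{2}{\eps^2}\int_\CO (1-\lvert \bne\rvert^2)\lvert \nabla \bne\rvert^2\, dx.
\end{equation*}

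Next, the two stopping times play complementary roles. Because $\sigma^\eps\leq \sigma_2^\eps$, one has $\lvert \bne(s,x)\rvert \geq \tfrac{1}{2}$ on $[0,\sigma^\eps]\times \CO$, so the third term on the left dominates $\frac{1}{4\eps^4}\lvert 1-\lvert \bne\rvert^2\rvert^2_{L^2}$; this is the quantitative pointwise lower bound which, as flagged in Remark \ref{rem-Hong}, substitutes in the stochastic setting for Hong's deterministic inequality. The unwanted cross term is split by Young's inequality,
\begin{equation*}
\frac{2}{\eps^2}(1-\lvert \bne\rvert^2)\lvert \nabla \bne\rvert^2 \leq \frac{1}{8\eps^4}(1-\lvert \bne\rvert^2)^2 + 8\lvert \nabla \bne\rvert^4,
\end{equation*}
and the first piece is absorbed into the newly-created $\frac{1}{4\eps^4}$ contribution on the left, leaving exactly the constant $\frac{1}{8\eps^4}$ appearing in \eqref{Eq-Import-maxreg}. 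The surviving remainder on the right is then $\lvert \Delta \bne + f_\eps(\bne)\rvert^2_{L^2} + 8 \lvert \nabla \bne\rvert^4_{L^4}$.

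The $L^4$-remainder is killed using the Ladyzhenskaya--Struwe inequality together with the other stopping-time constraint $\sigma^\eps\leq \sigma_1^\eps$, which forces $\sup_{(s,x)\in[0,\sigma^\eps]\times \CO}\int_{B(x,R)}\lvert \nabla \bne\rvert^2\, dy \leq \delta$. Integrating in time up to $\sigma^\eps$ yields
\begin{equation*}
8\lvert \nabla \bne\rvert^4_{L^4(\MO_{\sigma^\eps})} \leq 8 c_0 \delta \bigl(\lvert \Delta \bne\rvert^2_{L^2(\MO_{\sigma^\eps})} + R^{-2}\lvert \nabla \bne\rvert^2_{L^2(\MO_{\sigma^\eps})}\bigr),
\end{equation*}
and the assumption $\delta< 1/(8c_0)$ makes the coefficient $8c_0\delta$ strictly less than $1$, so the $\lvert \Delta \bne\rvert^2$ contribution on the right is strictly smaller than the matching term on the left and absorbs into it (after a harmless rebalancing of constants). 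What remains is controlled by $\lvert \Delta \bne + f_\eps(\bne)\rvert^2_{L^2(\MO_{\sigma^\eps})} + R^{-2}\lvert \nabla \bne\rvert^2_{L^2(\MO_{\sigma^\eps})}$, and both terms have $L^p(\Omega)$ bounds uniform in $\eps$ by Lemma \ref{Energy estimates I}. Raising to the $p$-th power and taking expectation produces the desired $K_1(p)$.

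The main obstacle in this argument is the tight calibration $\delta < 1/(8c_0)$, which is exactly what is needed to make the Ladyzhenskaya--Struwe absorption of $\lvert \Delta \bne\rvert^2$ go through; combined with the crucial but non-obvious use of $\sigma_2^\eps$ to manufacture a quantitative lower bound on $\lvert \bne\rvert$, this is what forces the joint definition $\sigma^\eps = \sigma_1^\eps \wedge \sigma_2^\eps$. Any attempt to drop either stopping rule would either lose the small-energy mechanism of Struwe's inequality or break the conversion of $\lvert f_\eps(\bne)\rvert^2_{L^2}$ into control of $(1-\lvert \bne\rvert^2)^2/\eps^4$.
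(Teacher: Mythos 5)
Your proposal is correct and follows essentially the same route as the paper: expand $\lvert \Delta \bne + f_\eps(\bne)\rvert^2_{\el^2}$ and integrate by parts, use Young's inequality on the cross term, exploit $\lvert \bne\rvert^2\ge \tfrac14$ up to $\sigma_2^\eps$ to convert $\lvert f_\eps(\bne)\rvert^2_{\el^2}$ into $\tfrac{1}{\eps^4}\lvert 1-\lvert\bne\rvert^2\rvert^2_{\el^2}$, and absorb the $\lvert\nabla\bne\rvert^4_{\el^4}$ remainder via the Ladyzhenskaya--Struwe inequality with the small-energy constraint $\delta<1/(8c_0)$ from $\sigma_1^\eps$, closing with Lemma \ref{Energy estimates I}. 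Your write-up is in fact more explicit than the paper's three-step sketch, and the constants and absorption arguments all check out.
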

	\begin{proof}[Sketch of the proof of Lemma \ref{Energy Estimates IB}] 		The idea of the proof consists in the following three steps.
		\begin{itemize}
			\item We expand $\lvert \Delta \bne + f_\eps(\bne)\rvert^2_{\el^2}$ , use integration by parts and the Young inequality to obtain
			\begin{align}
		 \lvert \Delta \bne \rvert^2_{\el^2(\MO_{\sigma^\eps})} + \frac1{\eps^4} \lvert (1-\lvert \bne \rvert^2) \bne \rvert^2_{\el^2(\MO_{\sigma^\eps})} +\frac1{\eps^2} \lvert \nabla \lvert \bne\rvert^2 \rvert^2_{\el^2(\MO_{\sigma^\eps}) }     \nonumber \\
			\le
			\lvert \Delta \bne + f_\eps(\bne) \rvert^2_{\el^2(\MO_{\sigma^\eps})} + 4 \lvert \nabla \bne \rvert^4_{\el^4(\MO_{\sigma^\eps})} + \frac1{8\eps^4} \lvert (1-\lvert \bne\rvert^2) \lvert^2_{L^2(\MO_{\sigma^\eps})}.
			\end{align}
			\item We Use the fact $\lvert \bne(t) \rvert^2 \ge \frac14$ for $t\in [0,\sigma^\eps]$ to control the term containing $\lvert (1-\lvert \bne \rvert^2) \bne \rvert^2$ (this yields the term $\lvert 1- \lvert \bne\rvert^2 \rvert^2$ in the estimates \eqref{Eq-Import-maxreg}! )
			\item We finally use the Ladyzhenskaya-Struwe lemma and Lemma \ref{Energy estimates I} to conclude.
		\end{itemize}
	\end{proof}
Before proceeding further, we recall the following lemma which
was proved in \cite{Hocquet2018} in the case of a general domain. For the case of the torus, it is enough to observe that the $d=2$-dimensional result follows from the $d=1$-dimensional one. In the latter case,
it follows by a simple scaling argument applied to a (large) interval $[0,L]$ with radius $R=1$  with the centers chosen by $x_i=i$, $i=0, \ldots, [L]$. Here $[L]$ denotes the integer part of $L$.   
\begin{lem}\label{Lem:Covering}
	There exists a positive number $C>0$ such that the following holds.
	
	For every $R>0$ there exists a natural number $N_R \in \mathbb{N}$
	such that $N_R  \leq CR^{-2}$ and
	a finite set $\{ x_i : i=1, \cdots, N_R\} \subset \CO$ such that
	\begin{equation}\label{Eq:EqforCover}
	\text{for every $x \in \CO$ there exists $i \in \{1,\cdots,N_R\}$ such that $
		B(x,R) \subset  B(x_i,2R)$ }
	\end{equation}
	Note that in particular $ \CO =\bigcup_{i=1}^{N_R} B(x_i, 2R)$.
\end{lem}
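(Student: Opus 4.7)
The plan is to construct the centers as an equispaced square grid on $\CO$, exploiting the product structure $\CO = (\mathbb{R}/2\pi\mathbb{Z})^2$. The cardinality bound $N_R \le C R^{-2}$ will then reduce to the square of a one-dimensional count, while the required metric estimate will reduce to the Pythagorean theorem. The inclusion $B(x, R) \subset B(x_i, 2R)$ is equivalent, by the triangle inequality, to the demand that every $x \in \CO$ lie within distance $R$ of some center $x_i$, which for a grid of spacing $h$ follows whenever $h / \sqrt{2} \le R$.

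Concretely, for $R \in (0, 2\pi]$ I would set $M_R := \lceil 2\pi / R \rceil$, so that $M_R \le 1 + 2\pi / R$, and put $h := 2\pi / M_R \le R$. On the circle $\mathbb{R}/2\pi\mathbb{Z}$ place the equispaced points $y_k := k h$ for $k = 0, \ldots, M_R - 1$; every point of the circle then lies within geodesic distance $h / 2$ of some $y_k$. Define the two-dimensional centers $x_{i,j} := (y_i, y_j) \in \CO$ for $i, j \in \{0, \ldots, M_R - 1\}$, and relabel them as $x_1, \ldots, x_{N_R}$, so that
\[
N_R := M_R^2 \le (1 + 2\pi / R)^2 \le C R^{-2}
\]
for a universal constant $C$ (enlarged if necessary to handle $R \ge 2\pi$, where a single center trivially covers $\CO$ since its diameter is $\pi \sqrt{2}$). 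For any $x = (x^{(1)}, x^{(2)}) \in \CO$, choosing $i, j$ to minimize the distance in each coordinate and using the product form of the torus metric recalled in Section \ref{Sec:2} gives
\[
d(x, x_{i,j}) \le \sqrt{(h/2)^2 + (h/2)^2} \;=\; h / \sqrt{2} \;\le\; R,
\]
and the triangle inequality yields $B(x, R) \subset B(x_{i,j}, 2R)$, as required.

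There is no substantive analytical obstacle; the only bookkeeping point is the identification of the intrinsic torus metric with the coordinatewise wraparound expression from Section \ref{Sec:2}, which is immediate for $R \le \pi$ (the regime of interest in the subsequent stopping-time construction) and makes the case $R \ge \pi$ trivial. Thus, in line with the authors' remark, the proof reduces to a simple scaling argument combined with a product construction from the one-dimensional case.
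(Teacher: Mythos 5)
Your grid construction is correct and is essentially the argument the paper itself has in mind: the authors only sketch the proof, reducing the two-dimensional case on $\CO$ to a product of one-dimensional equispaced coverings obtained by a scaling argument (citing Hocquet for general domains), which is precisely your $M_R^2$ lattice with spacing $h\le R$ and the Pythagorean estimate $d(x,x_{i,j})\le h/\sqrt2\le R$. The one caveat --- inherent in the lemma's statement rather than introduced by your proof --- is that $N_R\le CR^{-2}$ cannot hold for arbitrarily large $R$ with a universal $C$ (since $N_R\ge 1$); this is immaterial here because the lemma is only invoked for small radii $R\le R_0$.
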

By staying in  $[0,\sigma^\eps]$ and using the above covering lemma we obtain the following lemma.
	\begin{lem}\label{Exponential Estimates}
		Assume that  $\delta\in (0, 1/8c_0)$, $R_0$ as before.  Then, for  every $p \in \mathbb{N}$ there exists a constant $K_2(p)>0$ independent of   $\eps\in(0,1]$
	such that
	\begin{align}
		&	\mathbb{E} \exp{ \left(p \sup_{t\in [0,\sigma^\eps]} \NRJ{\bu^\eps, \bn^\eps}{t}\right)  } \leq K_2(p) \label{Eq:ExpoOfNRJ-0}\\
		&	\mathbb{E} \exp\Bigl(   p \int_0^{\sigma^\eps}\Bigl[\lvert \Delta \bn^\eps \rvert^2_{\el^2} + \lvert \nabla \bu^\eps \rvert^2_{\el^2}\nonumber \\
		& \qquad \qquad \qquad + \frac{1}{8 \eps^4}\lvert [1-\lvert \bn^\eps\rvert^2] \rvert^2_{\el^2} + \frac{1}{\eps^2} \lvert \nabla \lvert \bn^\eps \rvert^2\rvert^2_{\el^2} \Bigr] ds   \Bigr)\leq K_2(p), \label{Eq:ExpoOfEnstrophy-0}\\
		&	\mathbb{E} \exp{\left(p \int_0^{\sigma^\eps}\int_\MO  (\lvert \nabla \bn^\eps \rvert^4+ \lvert \bu^\eps \rvert^4   ) ds  \right)   } \leq K_2(p).\label{Eq:ExpoL4-0}
		\end{align}
	
		
	\end{lem}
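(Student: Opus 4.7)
The plan is to upgrade the polynomial moment bounds from Lemmas \ref{Energy estimates I} and \ref{Energy Estimates IB} into exponential moments by marrying the energy identity \eqref{Eq:NRJ-IDENT} (truncated at $\sigma^\eps$) with the exponential supermartingale inequality. Writing $E(t) := \NRJ{\bu^\eps,\bn^\eps}{t}$ and $D(t) := \ENST{\bu^\eps,\bn^\eps}{t}$, the identity takes the form
$$E(t\wedge\sigma^\eps) + 2\int_0^{t\wedge\sigma^\eps} D(r)\,dr \leq E(0) + C_1 T + M_{t\wedge\sigma^\eps},$$
where $M$ is the continuous local martingale built from the two stochastic integrals of \eqref{Eq:NRJ-IDENT}; exploiting $\bh\in\rH^2$ and the Hilbert--Schmidt embedding $\rK\embed\ve$, one checks $\langle M\rangle_t \leq C_2\int_0^t E(s)\,ds$. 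This structural quadratic-variation bound is the ingredient that makes the exponential calculus close.

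To prove \eqref{Eq:ExpoOfNRJ-0}, I would use the fact that $\exp(\lambda M_t - \tfrac{\lambda^2}{2}\langle M\rangle_t)$ is a supermartingale of mean at most one, split $\lambda M_t = (\lambda M_t - \tfrac{\lambda^2}{2}\langle M\rangle_t) + \tfrac{\lambda^2}{2}\langle M\rangle_t$, and apply Cauchy--Schwarz to obtain an inequality of the shape
$$\mathbb{E}\exp\bigl(\lambda \sup_{t\leq\sigma^\eps} E(t)\bigr) \leq C(\lambda)\,\Bigl(\mathbb{E}\exp\bigl(\lambda^2 C_2 T\,\sup_{t\leq\sigma^\eps} E(t)\bigr)\Bigr)^{1/2}.$$
Choosing $\lambda$ small enough that $\lambda^2 C_2 T \leq \lambda/2$ lets the right-hand side be absorbed, yielding the estimate for one small $\lambda_0$; the extension to arbitrary $p$ is then standard, either by iterating on short subintervals $[kT/N,(k{+}1)T/N]\wedge\sigma^\eps$ using the Markov property, or by interpolating between consecutive integer moments from Lemma \ref{Energy estimates I}. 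Doob's maximal inequality applied to the exponentiated supermartingale takes care of the supremum inside the exponential.

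For \eqref{Eq:ExpoOfEnstrophy-0}, the pointwise inequality derived in the first bullet of the sketch of Lemma \ref{Energy Estimates IB}, combined with $|\bne|\geq 1/2$ on $[0,\sigma^\eps]$ and the Ladyzhenskaya--Struwe absorption (valid because $\delta<1/(8c_0)$), shows that the integrand under the exponential is controlled by $D(t)$ plus a harmless multiple of $|\nabla\bne|^2_{\el^2}$. Repeating the martingale argument of the previous paragraph while keeping the non-negative term $2\lambda\int_0^t D$ on the left-hand side produces the exponential bound of $\int_0^{\sigma^\eps} D(s)\,ds$, hence \eqref{Eq:ExpoOfEnstrophy-0}. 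Estimate \eqref{Eq:ExpoL4-0} then follows from the pointwise Ladyzhenskaya--Struwe bound $\int_0^{\sigma^\eps}\int_\MO |\nabla\bne|^4 \leq c_0\delta\bigl(|\Delta\bne|^2_{L^2(\MO_{\sigma^\eps})} + R^{-2}|\nabla\bne|^2_{L^2(\MO_{\sigma^\eps})}\bigr)$ together with the 2D Ladyzhenskaya inequality $|\bu^\eps|^4_{\el^4} \leq C|\bu^\eps|^2_{\el^2}|\nabla\bu^\eps|^2_{\el^2}$; after Cauchy--Schwarz, the right-hand sides are products of quantities already exponentially integrable by \eqref{Eq:ExpoOfNRJ-0} and \eqref{Eq:ExpoOfEnstrophy-0}.

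The main obstacle I anticipate is closing the exponential moment in the second paragraph: since $\langle M\rangle_t$ scales with $\int E\,ds$ rather than with $T$, the naive use of the exponential supermartingale produces a circular bound of the form ``exponential moment of $E$ $\leq$ exponential moment of $E$''. Overcoming this requires the smallness of $\lambda$ relative to $C_2 T$ (which itself depends on the covariance of $W$, the Hilbert--Schmidt norm of $\rK\embed\ve$, and $|\bh|_{\rH^2}$), together with the short-interval iteration described above; this is also the place where the Hilbert--Schmidt assumption on $\rK\embed\ve$ is genuinely used.
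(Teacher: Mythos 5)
There is a genuine gap, and it is located exactly where you flag your ``main obstacle'': the martingale/absorption route cannot deliver \eqref{Eq:ExpoOfNRJ-0} for \emph{every} $p\in\mathbb{N}$, and the paper's proof does not take that route at all. The key idea you are missing is that the stopping time $\sigma^\eps=\sigma_1^\eps(\delta,R)\wedge\sigma_2^\eps$ was built precisely so that the energy is bounded \emph{deterministically} before $\sigma^\eps$: combining the defining property of $\sigma_1^\eps(\delta,R)$ (the localized energy $\NRJR{R}{\bue,\bne}{t,x}$ stays below $\delta$ for all $x$) with the covering Lemma \ref{Lem:Covering} gives
\begin{equation*}
\sup_{t\in[0,\sigma^\eps]}\NRJ{\bu^\eps,\bn^\eps}{t}\leq\sum_{k=1}^{N_{R_0}}\sup_{(t,x)}\NRJR{R_0}{\bue,\bne}{t,x}\leq N_{R_0}\,\delta\quad\text{pointwise on }\Omega,
\end{equation*}
so \eqref{Eq:ExpoOfNRJ-0} is immediate with $K_2(p)=\exp(pN_{R_0}\delta)$, with no stochastic analysis whatsoever. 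Your approach instead treats $E(t)\leq E(0)+C_1T+M_t$ with $\langle M\rangle\lesssim\int E$ and tries to absorb; but since the noise in the velocity equation is additive, $\sup_t\lvert\bue(t)\rvert^2_{\el^2}$ has at best Gaussian tails (compare the Ornstein--Uhlenbeck process, for which $\mathbb{E}\exp(\lambda\sup_t\lvert X_t\rvert^2)=\infty$ for $\lambda$ large), so without the stopping-time truncation the quantity $\mathbb{E}\exp\bigl(p\sup_tE(t)\bigr)$ is genuinely infinite for large $p$. Neither of your two proposed fixes repairs this: interpolating the polynomial moments of Lemma \ref{Energy estimates I} cannot yield exponential moments without growth control of $K_0(p)$ in $p$, and subdividing $[0,T]$ does not remove the Gaussian tail of the stochastic convolution (the admissible exponent after patching the subintervals still degenerates). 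Moreover the self-improving inequality $X\leq C(\lambda)X^{1/4}$ requires an a priori finiteness of $X$ that you do not supply.

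The gap propagates to \eqref{Eq:ExpoOfEnstrophy-0}: there the paper does use the exponential-martingale/Cauchy--Schwarz argument you describe, starting from
\begin{equation*}
\lvert\Delta\bne\rvert^2_{\el^2(\MO_{\sigma^\eps})}+\tfrac1{8\eps^4}\lvert1-\lvert\bne\rvert^2\rvert^2_{\el^2(\MO_{\sigma^\eps})}+\tfrac1{\eps^2}\lvert\nabla\lvert\bne\rvert^2\rvert^2_{\el^2(\MO_{\sigma^\eps})}\leq K_3+\int_0^{\sigma^\eps}\langle\bue,dW\rangle+\int_0^{\sigma^\eps}\langle\nabla\bne,\bne\times\nabla\bh\rangle\,d\eta+\tfrac{K_3}{R_0^2}\int_0^{\sigma^\eps}\lvert\nabla\bne\rvert^2_{\el^2}\,ds,
\end{equation*}
but the reason the exponential moment of the right-hand side is finite for \emph{every} $p$ is again that both the quadratic variations of the stopped stochastic integrals and the term $\int_0^{\sigma^\eps}\lvert\nabla\bne\rvert^2_{\el^2}\,ds$ are controlled by $T\sup_{t\leq\sigma^\eps}\NRJ{\bu^\eps,\bn^\eps}{t}\leq TN_{R_0}\delta$, a deterministic constant. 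Your treatment of \eqref{Eq:ExpoL4-0} via the Ladyzhenskaya--Struwe inequality and the 2D Ladyzhenskaya inequality is correct and matches the paper, but it too ultimately rests on the deterministic energy bound (e.g.\ $\int\lvert\bue\rvert^4\leq C\sup\lvert\bue\rvert^2_{\el^2}\int\lvert\nabla\bue\rvert^2$ with the first factor bounded by $N_{R_0}\delta$). In short: the lemma is not an absorption result but a consequence of the pathwise smallness enforced by $\sigma^\eps$ combined with the covering lemma; without invoking that, the statement for all $p$ is false and the proof cannot close.
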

\begin{proof}[Sketch proof of Lemma \ref{Exponential Estimates} ]
The first estimate \eqref{Eq:ExpoOfNRJ-0} can be easily obtained. In fact, by covering  the torus $\MO$ by balls $B(x_k,R_0)$ we obtain
	\begin{align*}
	\sup_{t\in [0,\sigma^\eps]} \NRJ{\bu^\eps, \bn^\eps}{t} \leq   \sum_{k=1}^{N_{R_0}}  \sup_{(t,x)\in [0,\sigma^\eps]\times B(x_k, R_0)} \NRJR{R_0}{\bue,\bne}{t,x}
	\leq  N_{R_0}\delta,
	\end{align*}
from which we easily derive \eqref{Eq:ExpoOfNRJ-0}.

The proof of the  second estimate \eqref{Eq:ExpoOfEnstrophy-0} is quite long. We start using the previous estimates and the energy inequality \eqref{Eq:NRJ-IDENT} and derive that
		\begin{align}
		&\lvert \Delta \bne \rvert^2_{\el^2(\MO_{\sigma^\eps})} + \frac1{8\eps^4} \lvert (1-\lvert \bne \rvert^2)\rvert^2_{\el^2(\MO_{\sigma^\eps})} +\frac1{\eps^2} \lvert \nabla \lvert \bne\rvert^2 \rvert^2_{\el^2(\MO_{\sigma^\eps}) }     \nonumber \\
		&\le
		\lvert \Delta \bne + f_\eps(\bne) \rvert^2_{\el^2(\MO_{\sigma^\eps})}\nonumber \\
		& \leq  K_3 + \int_0^{\sigma^\eps} \langle \bue, dW\rangle + \int_0^{\sigma^\eps}\langle \nabla \bne, \bne \times \nabla \bh \rangle d\eta + \frac{K_3}{R_0^2}\int_0^{\sigma^\eps} \lvert \nabla \bne \rvert^2_{\el^2}  ds .\label{Eq:Expo-Entrophy-2}
		\end{align}
Next, by the It\^o formula and the previous  exponential inequality estimate  \eqref{Eq:ExpoOfNRJ-0} we obtain
		\begin{align*} \mathbb{E} \left(\exp\Big[p \int_0^{\sigma^\eps} \langle \bue, dW\rangle + \int_0^{\sigma^\eps}\langle \nabla \bne, \bne \times \nabla \bh \rangle d\eta + \frac{p K_3}{R_0^2}\int_0^{\sigma^\eps} \lvert \nabla \bne \rvert^2_{\el^2}  ds \Big]\right) \leq C,
		\end{align*}
		from which along with \eqref{Eq:Expo-Entrophy-2} we derive \eqref{Eq:ExpoOfEnstrophy-0}.
\end{proof}
We now can derive the following important sets of uniform estimates.

	\begin{lem}\label{Energy Estimates II}
		For every $p\in \mathbb{N}$ there exists a constant $K_3(p)>0$ such that

		\begin{align}
		\mathbb{E}\sup_{ t\in [0,\sigma^\eps]}  [\lvert \nabla \bue(t) \rvert^2_{\el^2}+ \lvert \Delta \bne(t) + f_\eps(\bne(t))\rvert^2_{\el^2}]^p\leq K_3(p), \\
		\mathbb{E} \left[\int_0^{\sigma^\eps} \left(\lvert A \bue(s)  \rvert^2_{\el^2} + \lvert \nabla [\Delta \bn^\eps(s)+ f_\eps(\bn^\eps(s)) ]\rvert^2_{\el^2 } \right)ds \right]^p \leq K_3(p), \\
		\mathbb{E} \left[\int_0^{\sigma^\eps }\left(\left \lvert \frac1\eps(\Delta \bne(s) + f_\eps(\bne(s) ) \cdot \bne(s)  \right\rvert^2_{\el^2}  \right) ds\right]^p\leq K_3(p).
		\end{align}
	\end{lem}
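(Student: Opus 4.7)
The plan is to apply It\^o's formula to the second-order energy functional
\[
\Phi^\eps(t) := \tfrac{1}{2}\|\nabla\bue(t)\|_{L^2}^2 + \tfrac{1}{2}\|\Delta\bne(t) + f_\eps(\bne(t))\|_{L^2}^2,
\]
at the stopped time $t\wedge\sigma^\eps$. Writing $G(\bn):=\Delta\bn + f_\eps(\bn)$, so that $\Phi^\eps = \tfrac12\|\nabla\bue\|^2+\tfrac12\|G(\bne)\|^2$, the chain rule gives $DG(\bn)\bv = \Delta\bv + Df_\eps(\bn)\bv$ with $Df_\eps(\bn)\bv = \tfrac{1}{\eps^2}[(1-|\bn|^2)\bv - 2(\bn\cdot\bv)\bn]$. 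Substituting the S(P)DEs \eqref{GL-LC-1}--\eqref{GL-LC-3} (after converting the Stratonovich integral to It\^o, which adds a $(\bne\times\bh)\times\bh$ correction controlled uniformly in $\eps$ by $\|\bh\|_{\sh^2}$) and integrating by parts on the torus, the deterministic part of $d\Phi^\eps$ produces on the left-hand side the dissipations $\|A\bue\|^2$ and $\|\nabla G(\bne)\|^2$, together with the positive quadratic piece $\tfrac{2}{\eps^2}\int_\CO(\bne\cdot G(\bne))^2\,dx$ coming from the projection part of $\langle G,Df_\eps G\rangle$. In particular estimate (iii) is extracted as a byproduct of the computation establishing (i) and (ii).

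Next I would dominate the cross terms. The Navier--Stokes transport $\langle A\bue,\Pi_L(\bue\cdot\nabla\bue)\rangle$ and Ericksen coupling $\langle A\bue,\Pi_L\Div(\nabla\bne\odot\nabla\bne)\rangle$ are estimated by H\"older, the 2D Gagliardo--Nirenberg inequality and Young, with the top-order parts absorbed into $\|A\bue\|^2+\|\nabla G(\bne)\|^2$. The essential gain is the Ladyzhenskaya--Struwe inequality combined with the definition of $\sigma^\eps$, which forces the local energy $\sup_x\mathcal{E}_R[\bue,\bne](t,x)\le\delta<1/(8c_0)$ and hence keeps the absorption constants small. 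The director-transport contribution $\langle G(\bne),\Delta(\bue\cdot\nabla\bne)+Df_\eps(\bne)(\bue\cdot\nabla\bne)\rangle$ is handled analogously after shifting one derivative onto $G$.

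The delicate step is the sign-indefinite remainder $\tfrac{1}{\eps^2}\int_\CO(1-|\bne|^2)\,|G(\bne)|^2\,dx$ produced by the scalar part of $Df_\eps$. I would split it via Young as
\[
\tfrac{1}{\eps^2}\int_\CO(1-|\bne|^2)|G(\bne)|^2\,dx \;\le\; \tfrac{1}{4\eps^4}\int_\CO(1-|\bne|^2)^2\,dx + \|G(\bne)\|_{L^4}^4,
\]
and then invoke the 2D Gagliardo--Nirenberg bound $\|G\|_{L^4}^4\lesssim\|G\|_{L^2}^2\|\nabla G\|_{L^2}^2$: the $\|\nabla G\|^2$ factor is absorbed into the left-hand-side dissipation (its coefficient is made small self-consistently by the Gr\"onwall step), while $\tfrac{1}{\eps^4}\int(1-|\bne|^2)^2$ has arbitrary time-integrated moments by Lemma \ref{Energy Estimates IB}.

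Finally, one applies Burkholder--Davis--Gundy to the $W$- and $\eta$-martingales, raises to the $p$-th power, and invokes a stochastic Gr\"onwall lemma that accommodates exponentially integrable multiplicative coefficients; the relevant exponential moments of $\|\nabla\bue\|^2$, $\|\Delta\bne\|^2$, $\tfrac{1}{\eps^4}(1-|\bne|^2)^2$ and $\|\nabla\bne\|_{L^4}^4$ are supplied by Lemma \ref{Exponential Estimates}. The main obstacle is precisely the indefinite $\eps^{-2}$ term described above, which naively forces the estimate to blow up as $\eps\to 0$. Its control depends on the simultaneous use of the stopping at $\sigma^\eps$ (which supplies both the local smallness $\delta$ and the lower bound $|\bne|\ge\tfrac12$), the maximal-regularity estimate of Lemma \ref{Energy Estimates IB}, and the exponential-moment bootstrap of Lemma \ref{Exponential Estimates}; without this confluence the Gr\"onwall closure would not be uniform in $\eps$.
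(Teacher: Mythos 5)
Your overall architecture is the same as the paper's: the paper does not write out a separate proof of this lemma but states that it is proved like Proposition \ref{Key uniform estimates}, i.e.\ by an It\^o expansion of a second--order energy functional, absorption of the cross terms using the Ladyzhenskaya--Struwe inequality together with the smallness $\delta$ built into $\sigma^\eps$ and the lower bound $\lvert\bne\rvert\ge\tfrac12$, followed by Burkholder--Davis--Gundy and a Gr\"onwall argument whose multiplicative coefficients are controlled by the exponential moments of Lemma \ref{Exponential Estimates} (the paper implements this via the weight $\Phi_1=\exp(-\int_0^\cdot\mathsf{N}_1(\bne)\,ds)$ and a Cauchy--Schwarz step to remove it). Your choice of functional $\tfrac12\lvert\nabla\bue\rvert^2_{\el^2}+\tfrac12\lvert\Delta\bne+f_\eps(\bne)\rvert^2_{\el^2}$ is the natural one for this statement, and your observation that estimate (iii) falls out of the good-signed term $-\tfrac{2}{\eps^2}\int_\CO(\bne\cdot(\Delta\bne+f_\eps(\bne)))^2\,dx$ produced by the projection part of $Df_\eps$ is correct.

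There is, however, a genuine gap in your treatment of the indefinite term $\tfrac{1}{\eps^2}\int_\CO(1-\lvert\bne\rvert^2)\lvert G\rvert^2\,dx$, where $G=\Delta\bne+f_\eps(\bne)$. After your Young splitting you are left with $\lvert G\rvert^4_{\el^4}\lesssim\lvert G\rvert^2_{\el^2}\lvert\nabla G\rvert^2_{\el^2}+\lvert G\rvert^4_{\el^2}$, and you propose to absorb the first product into the dissipation because "its coefficient is made small self-consistently by the Gr\"onwall step". This does not close: the coefficient multiplying $\lvert\nabla G\rvert^2_{\el^2}$ is $\lvert G\rvert^2_{\el^2}$, which is exactly the quantity whose supremum the lemma is asserting to be bounded by a (large) constant $K_3(p)$ --- there is no smallness available for it, since the stopping time $\sigma^\eps$ only forces smallness of the \emph{first-order} local energy $\NRJR{R}{\bue,\bne}{t,x}$, not of $\lvert\Delta\bne+f_\eps(\bne)\rvert^2_{\el^2}$. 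Reading the product the other way, as $\lvert\nabla G\rvert^2_{\el^2}$ times the energy, is no better, because exponential integrability of $\int_0^{\sigma^\eps}\lvert\nabla G\rvert^2_{\el^2}\,ds$ is not available a priori (it is part of what is being proved). So the term (energy)$\times$(dissipation) is supercritical for your Gr\"onwall closure.

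The repair is to avoid the quartic splitting altogether and use the pointwise bound that the paper isolates as \eqref{Eq:DerFeps}: on $[0,\sigma^\eps]$ one has $\lvert\bne\rvert\ge\tfrac12$ and $\lvert\bne\rvert\le1$, hence $\tfrac{1}{\eps^2}(1-\lvert\bne\rvert^2)\le2\lvert f_\eps(\bne)\rvert\le2\lvert G\rvert+2\lvert\Delta\bne\rvert$ pointwise. This turns the bad term into $\int_\CO(\lvert G\rvert+\lvert\Delta\bne\rvert)\lvert G\rvert^2\,dx$, which by the 2D Gagliardo--Nirenberg inequality is bounded by $C\lvert G\rvert^2_{\el^2}\lvert G\rvert_{\rH^1}+C\lvert\Delta\bne\rvert_{\el^2}\lvert G\rvert_{\el^2}\lvert G\rvert_{\rH^1}$, i.e.\ only \emph{one} power of $\lvert\nabla G\rvert_{\el^2}$ appears; Young's inequality then yields $\tfrac18\lvert\nabla G\rvert^2_{\el^2}$ plus a Gr\"onwall term whose coefficient $\lvert G\rvert^2_{\el^2}+\lvert\Delta\bne\rvert^2_{\el^2}\lesssim\lvert\Delta\bne\rvert^2_{\el^2}+\eps^{-4}\lvert1-\lvert\bne\rvert^2\rvert^2_{\el^2}$ has exponentially integrable time integral by \eqref{Eq:ExpoOfEnstrophy-0}, so the stochastic Gr\"onwall argument closes uniformly in $\eps$. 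This is precisely the mechanism behind the paper's Lemma \ref{Lem:Alpha3}, which additionally exploits the identity $d\cdot\Delta^2d=\tfrac12\Delta^2\lvert d\rvert^2-4\nabla d\cdot\nabla\Delta d-2\lvert\nabla^2d\rvert^2-\lvert\Delta d\rvert^2$ to extract the extra good-signed quantities $-\tfrac{1}{\eps^2}\lvert\Delta(1-\lvert d\rvert^2)\rvert^2_{\el^2}$ and $-\tfrac{1}{\eps^2}\int_\CO(1-\lvert d\rvert^2)[\lvert\nabla^2d\rvert^2+\lvert\Delta d\rvert^2]\,dx$. With this one modification the rest of your outline (cross terms, BDG, exponential-moment Gr\"onwall) goes through.
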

The proof of this lemma is very similar to the proof of the following  key uniform estimates.
\begin{prop}\label{Key uniform estimates}
		Let $\delta\in (0, 1/8c_0)$, $R_0=r_0\wedge r_1$. Then for every $p \in \mathbb{N}$ there exists a constant $K_4(p)>0$ such that
		\begin{align}
		&	\mathbb{E}\left( \sup_{t\in [0,\sigma^\eps]}[\lvert A^\frac12 \bue(t)\rvert^{2 }_{\el^2}+ \lvert \Delta \bne(t) \rvert^{2}_{\el^2}  ]^p\right )\leq K_4(p) ,
\label{Eq:EstRegularityH1H2-A}\\
		&	\mathbb{E}\left[\int_0^{\sigma^\eps} \left( \lvert A\bue \rvert^2_{\el^2}+ \lvert \nabla \Delta \bne \rvert^2_{\el^2}  \right) ds \right]^p\leq K_4(p).	
		\label{Eq:EstRegularityH2H3-A}
		\end{align}
		Moreover,  $\sigma^\eps <T$ is satisfied $\mathbb{P}$ almost surely.
	\end{prop}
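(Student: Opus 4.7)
The plan is to extend the strategy used in Lemma~\ref{Energy Estimates II} (whose proof the authors state is essentially the same) with one additional step: converting the control of the mixed quantities $|\Delta\bne + f_\eps(\bne)|^2_{\el^2}$ and $|\nabla[\Delta\bne + f_\eps(\bne)]|^2_{\el^2}$ into control of $|\Delta\bne|^2_{\el^2}$ and $|\nabla\Delta\bne|^2_{\el^2}$ separately. The separation is made possible by the two stopping-time mechanisms active on $[0,\sigma^\eps]$: the pointwise lower bound $|\bne|\ge 1/2$ from $\sigma_2^\eps$, and the smallness of the local energy $\sup_x \NRJR{R}{\bue,\bne}{t,x}\le \delta<1/(8c_0)$ from $\sigma_1^\eps(R)$. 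The $|A\bue|^2_{\el^2}$-pieces of \eqref{Eq:EstRegularityH1H2-A} and \eqref{Eq:EstRegularityH2H3-A} follow directly from Lemma~\ref{Energy Estimates II} (note $|A^{1/2}\bue|^2_{\el^2}=|\nabla\bue|^2_{\el^2}$ by \eqref{eqn-A^12}).

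For the sup estimate \eqref{Eq:EstRegularityH1H2-A}, I would expand
\[|\Delta\bne + f_\eps(\bne)|^2_{\el^2} = |\Delta\bne|^2_{\el^2} + |f_\eps(\bne)|^2_{\el^2} + 2\langle \Delta\bne, f_\eps(\bne)\rangle_{\el^2},\]
and integrate the cross term by parts on the torus to get
\[2\langle \Delta\bne, f_\eps(\bne)\rangle_{\el^2} = \frac{1}{\eps^2}|\nabla|\bne|^2|^2_{\el^2} - \frac{2}{\eps^2}\int_\CO(1-|\bne|^2)|\nabla\bne|^2\,dx.\]
Young's inequality dominates the second term by $\tfrac{1}{8\eps^4}|1-|\bne|^2|^2_{\el^2} + C|\nabla\bne|^4_{\el^4}$; the constraint $|\bne|\ge 1/2$ yields $|f_\eps(\bne)|^2_{\el^2}\ge \tfrac{1}{4\eps^4}|1-|\bne|^2|^2_{\el^2}$, absorbing the singular $\tfrac{1}{8\eps^4}$-term into $|f_\eps|^2_{\el^2}$; and the Ladyzhenskaya--Struwe inequality with $c_0\delta<1/8$ absorbs $|\nabla\bne|^4_{\el^4(\MO_t)}$ into $|\Delta\bne|^2_{\el^2(\MO_t)}$. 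Taking the supremum in $t\in[0,\sigma^\eps]$ and combining with Lemmas~\ref{Energy estimates I} and~\ref{Energy Estimates II} closes \eqref{Eq:EstRegularityH1H2-A}.

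The integral bound \eqref{Eq:EstRegularityH2H3-A} is obtained by the analogous It\^o-formula argument applied to the functional $\tfrac12|A^{1/2}\bue|^2_{\el^2} + \tfrac12|\Delta\bne + f_\eps(\bne)|^2_{\el^2}$, which produces $\int_0^{\sigma^\eps}(|A\bue|^2_{\el^2} + |\nabla[\Delta\bne + f_\eps(\bne)]|^2_{\el^2})\,ds$ as the coercive part of the drift. To pass from $|\nabla[\Delta\bne + f_\eps(\bne)]|^2$ to $|\nabla\Delta\bne|^2$, I would apply $\nabla$ to the previous decomposition: writing
\[\nabla f_\eps(\bne) = -\tfrac{2}{\eps^2}(\nabla\bne\cdot\bne)\bne + \tfrac{1}{\eps^2}(1-|\bne|^2)\nabla\bne,\]
the cross-term $2\langle \nabla\Delta\bne, \nabla f_\eps(\bne)\rangle$ is rewritten (after integration by parts) as a combination of $\eps^{-4}$-weighted quantities already controlled uniformly in $\eps$ by Lemmas~\ref{Energy Estimates IB} and~\ref{Exponential Estimates} (specifically the integrated bounds on $\eps^{-4}|1-|\bne|^2|^2$ and $\eps^{-2}|\nabla|\bne|^2|^2$), the sup-estimate just obtained, and the Sobolev embedding $\sh^2\hookrightarrow\el^\infty$ available in dimension two. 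Young's inequality and a Gronwall argument then close \eqref{Eq:EstRegularityH2H3-A}. The final claim $\sigma^\eps<T$ a.s.\ is obtained from a Chebyshev-type argument combining the exponential moments \eqref{Eq:ExpoOfNRJ-0}--\eqref{Eq:ExpoL4-0} with the definition of the stopping times $\sigma_1^\eps(R)$ and $\sigma_2^\eps$.

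The principal obstacle is the singular behaviour of $\tfrac{1}{\eps^2}(1-|\bne|^2)\bne$ as $\eps\to 0$. Separating $\Delta\bne$ from $f_\eps(\bne)$ in the squared $\el^2$-norm is only feasible once both stopping-time mechanisms act in tandem: the lower bound $|\bne|\ge 1/2$ converts the unfavourable $\eps^{-4}$-prefactor into a coercive lower bound on $|f_\eps|^2$, while the local-energy smallness, via Ladyzhenskaya--Struwe, absorbs the quartic nonlinearity $|\nabla\bne|^4_{\el^4}$ into the Laplacian. Without both ingredients acting together, the cross term $\langle\Delta\bne, f_\eps(\bne)\rangle$ cannot be closed uniformly in $\eps$.
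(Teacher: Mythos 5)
Your overall strategy---deduce the Proposition from Lemma \ref{Energy Estimates II} by algebraically separating $\Delta \bne$ from $f_\eps(\bne)$ inside the mixed norms---is not the paper's route (the paper applies It\^o's formula directly to $\Lambda(v,d)=\tfrac12\lvert\nabla v\rvert^2_{\el^2}+\tfrac12\lvert\Delta d\rvert^2_{\el^2}$ and closes a stochastic Gronwall argument with the weight $\Phi_1=\exp(-\int_0^\cdot \mathsf{N}_1(\bne)\,ds)$, removing the weight via the exponential moments of Lemma \ref{Exponential Estimates}), and as written it has a genuine gap at its hardest point. For the integral bound, passing from $\lvert\nabla[\Delta\bne+f_\eps(\bne)]\rvert^2_{\el^2}$ to $\lvert\nabla\Delta\bne\rvert^2_{\el^2}$ forces you to bound the cross term $-2\langle\nabla\Delta\bne,\nabla f_\eps(\bne)\rangle=2\langle\Delta\bne,\Delta f_\eps(\bne)\rangle$ from above by $\tfrac12\lvert\nabla\Delta\bne\rvert^2_{\el^2}$ plus time-integrable remainders. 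This is exactly the content of the paper's Lemma \ref{Lem:Alpha3}, which is the technical heart of the whole argument: it relies on the identity $d\cdot\Delta^2 d=\tfrac12\Delta^2\lvert d\rvert^2-4\nabla d\,\nabla\Delta d-2\lvert\nabla^2 d\rvert^2-\lvert\Delta d\rvert^2$ to expose sign-definite pieces, and on the pointwise trade $\tfrac1{\eps^2}(1-\lvert d\rvert^2)\le 2\lvert f_\eps(d)\rvert\le 2\lvert\Delta d+f_\eps(d)\rvert+2\lvert\Delta d\rvert$ followed by Gagliardo--Nirenberg. Your claim that the cross term reduces to ``$\eps^{-4}$-weighted quantities already controlled'' does not hold with the quantities you cite: a term such as $\eps^{-2}\int_\CO(1-\lvert\bne\rvert^2)\,\nabla\bne\cdot\nabla\Delta\bne\,dx$ cannot be closed using only the time-integrated bounds on $\eps^{-4}\lvert 1-\lvert\bne\rvert^2\rvert^2_{\el^2}$ and $\eps^{-2}\lvert\nabla\lvert\bne\rvert^2\rvert^2_{\el^2}$ (after H\"older and Young one is left with an uncompensated factor of $\eps^{-1}$ or with $\lvert\nabla\bne\rvert_{\el^\infty}$, which $\sh^2$ does not control in 2D). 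Without the identity and the $\lvert\bne\rvert\ge\tfrac12$ trade carried out explicitly, this step does not close uniformly in $\eps$.

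Two further points. First, for the sup-in-time estimate your absorption of $\lvert\nabla\bne(t)\rvert^4_{\el^4}$ invokes the Ladyzhenskaya--Struwe inequality, but that is a space-time estimate on $\MO_t$; at fixed $t$ you need its pointwise-in-time, covering-based analogue $\lvert\nabla\bne(t)\rvert^4_{\el^4}\le C\,\sup_x\int_{B(x,R)}\lvert\nabla\bne(t)\rvert^2\,dy\,\bigl(\lvert\Delta\bne(t)\rvert^2_{\el^2}+R^{-2}\lvert\nabla\bne(t)\rvert^2_{\el^2}\bigr)$, since the global Gagliardo--Nirenberg route would require global (not merely local) smallness of $\lvert\nabla\bne(t)\rvert^2_{\el^2}$, which is unavailable; this is fixable but must be said, and the threshold on $\delta$ must be recalibrated to the covering constant. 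Second, a Chebyshev argument based on the moment bounds can only estimate $\mathbb{P}(\sigma^\eps<T)$ from above, i.e.\ it shows the stopping criteria are unlikely to be triggered; it cannot yield the asserted $\sigma^\eps<T$ almost surely, which requires showing the criteria \emph{are} triggered before $T$. The velocity parts of \eqref{Eq:EstRegularityH1H2-A}--\eqref{Eq:EstRegularityH2H3-A} and the general architecture of your argument are sound, but the treatment of $\langle\Delta\bne,\Delta f_\eps(\bne)\rangle$ must be supplied in full for the proof to stand.
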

To derive the above crucial uniform estimates
 we will need to apply the It\^o formula to the functional $\Lambda:\ve \times \bH^2 \to [0,\infty)$ defined by
\begin{equation}\label{Eq:LambdaDef}
\Lambda(u,d)=\Lambda_1(u) + \Lambda_2(d ),
\end{equation}
where  $\Lambda_1: \bH^2 \to [0,\infty)$ and $\Lambda_2:\ve \to [0,\infty)$ are the energy functionals defined  by
\begin{equation}\label{Eq:Lambda1Lambda2Def}
\Lambda_1(d)=\frac12 \lvert \Delta d  \rvert^2_{\el^2} \text{ and } \Lambda_2(v)= \frac12 \lvert \nabla v \rvert^2_{\el^2}, \;\; v\in \ve, d \in \bH^2.
\end{equation}
We need to establish several lemma involving the first and second Fr\'echet derivatives of $\Lambda_1$ and $\Lambda_2$. Before stating and proving these lemma we recall the formulae for the derivative of $\Lambda_1$
\begin{align}
\Lambda_1^\prime(d)[\mathbf{g}]= \langle \Delta d, \Delta \mathbf{g}\rangle \text{ and }  \Lambda^{\prime \prime}(d)[\mathbf{g}, \mathbf{p}] = \langle \Delta \mathbf{g}, \Delta \mathbf{p} \rangle, \;\; d,\mathbf{g}, \mathbf{p} \in \bH^2.
\end{align}
We state the  following lemma which can be proved using elementary inequalities.
\begin{lem}\label{Lem:Alpha1-Theta1}
	There exists a constant $\alpha_0$ such that for all $v\in D(\stok)$ and $d\in \bH^3$
	\begin{equation}\label{Eq:Alpha1-Theta1}
	\begin{split}
	\Lambda_2^\prime (v )[-v\cdot \nabla v -\Pi_L [\divv(\nabla  d\odot \nabla d)  ]]    = &-  \langle \stok v , \Pi_L [\divv(\nabla d \odot \nabla d)  ] \rangle
	\\  & \leq \frac18 \left(\lvert \nabla \Delta d \rvert^2_{\el^2} + \lvert \stok  v \rvert^2_{\el^2} \right)+ \alpha_0  \lvert \nabla d \rvert^4_{\el^4} \lvert \Delta d \rvert^2_{\el2}.
	\end{split}
	\end{equation}
\end{lem}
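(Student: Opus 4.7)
The plan is to split the argument in two: first establish the equality, then derive the estimate.

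\textbf{Step 1 (the equality).} Since $\Lambda_2(v) = \frac12 \lvert \nabla v\rvert^2_{\el^2}$, the Fr\'echet derivative acts as $\Lambda_2^\prime(v)[\mathbf{g}] = \langle \nabla v, \nabla \mathbf{g}\rangle_{\el^2}$. For $v \in D(\stok)$ on the boundary-free torus, integration by parts reduces this to $\langle \stok v, \mathbf{g}\rangle_{\el^2}$, using that for divergence-free $v$ on $\CO$ one has $\stok v = -\Pi_L \Delta v = -\Delta v$. Applied to $\mathbf{g} = -v\cdot \nabla v - \Pi_L[\divv(\nabla d \odot \nabla d)]$, this splits into two terms; the first, $-\langle \stok v, v\cdot \nabla v\rangle$, will vanish by the classical 2D torus cancellation. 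I would justify it by a double integration by parts which, after invoking $\divv v = 0$, turns the integrand pointwise into $\tr((Dv)^T (Dv)^2)$; since the $2\times 2$ matrix $Dv$ is trace-free, Cayley--Hamilton yields $(Dv)^2 = -\det(Dv)\,I$, so this trace is identically zero.

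\textbf{Step 2 (the inequality).} A direct computation yields the identity
\begin{equation*}
\divv(\nabla d \odot \nabla d) = (\nabla d)^T \Delta d + \frac12 \nabla \lvert \nabla d\rvert^2,
\end{equation*}
with $[(\nabla d)^T \Delta d]_j = \sum_k \partial_j d_k \, \Delta d_k$. The pure-gradient term is killed either by $\Pi_L$ or by pairing with the divergence-free $\stok v$, so self-adjointness of $\Pi_L$ together with $\stok v \in \rH$ gives
\begin{equation*}
-\langle \stok v, \Pi_L[\divv(\nabla d \odot \nabla d)]\rangle = -\int_\CO \sum_{j,k} \stok v_j \, \partial_j d_k \, \Delta d_k \, dx.
\end{equation*}
Pointwise Cauchy--Schwarz in the inner $k$-sum produces the majorant $\lvert \stok v\rvert\,\lvert \nabla d\rvert\,\lvert \Delta d\rvert$, and H\"older with exponents $(2,4,4)$ promotes this to $\lvert \stok v\rvert_{\el^2}\lvert \nabla d\rvert_{\el^4}\lvert \Delta d\rvert_{\el^4}$. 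Since $\int_\CO \Delta d\,dx = 0$, the 2D Ladyzhenskaya inequality applies and yields $\lvert \Delta d\rvert_{\el^4} \leq C \lvert \Delta d\rvert^{1/2}_{\el^2}\lvert \nabla \Delta d\rvert^{1/2}_{\el^2}$. I would then apply Young's inequality with weights $(2,4,4)$ to the product $\lvert \stok v\rvert_{\el^2} \cdot \lvert \nabla \Delta d\rvert^{1/2}_{\el^2} \cdot \bigl(\lvert \nabla d\rvert_{\el^4} \lvert \Delta d\rvert^{1/2}_{\el^2}\bigr)$, tuning the free parameters so the coefficients of $\lvert \stok v\rvert^2_{\el^2}$ and $\lvert \nabla \Delta d\rvert^2_{\el^2}$ are each $\frac18$, and the remainder is absorbed into a constant multiple $\alpha_0 \lvert \nabla d\rvert^4_{\el^4}\lvert \Delta d\rvert^2_{\el^2}$.

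\textbf{Main obstacle.} The only genuinely subtle point is the Cayley--Hamilton cancellation $\langle \stok v, (v\cdot \nabla)v\rangle = 0$; this is honestly 2D and fails in higher dimensions, so the lemma (and the entire strategy based on $\Lambda$) is intrinsically tied to the torus being two-dimensional. Once that is available, everything else is a routine combination of 2D Ladyzhenskaya, H\"older, and Young, and $\alpha_0$ emerges as an explicit absolute constant depending only on the Ladyzhenskaya constant and the chosen Young weights.
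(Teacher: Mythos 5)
Your argument is correct. The paper gives no proof of this lemma (it only remarks that it "can be proved using elementary inequalities"), and your route is exactly the intended one: the equality rests on the classical 2D periodic cancellation $\langle \stok v, (v\cdot\nabla)v\rangle=0$, which your Cayley--Hamilton computation establishes correctly for trace-free $2\times 2$ Jacobians, and the estimate then follows from the decomposition $\divv(\nabla d\odot\nabla d)=(\nabla d)^T\Delta d+\tfrac12\nabla\lvert\nabla d\rvert^2$ together with H\"older, the mean-zero Ladyzhenskaya inequality for $\Delta d$ on the torus, and Young's inequality, just as you describe.
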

\begin{lem}\label{Lem:Alpha2-Theta2}
	There exists a constant $\alpha_1>0$ such that for all $v\in D(\stok)$ and $d\in \bH^2$
	\begin{equation}\label{Eq:Alpha2-Theta2}
	\Lambda_1^\prime (d)[ - v \cdot \nabla d ]=-\langle \Delta d , \Delta(v \cdot \nabla d)      \rangle \leq \frac18 \left(\lvert \nabla \Delta d \rvert^2_{\el^2} + \lvert \stok v \rvert^2_{\el^2} \right)+ \alpha_1 [\lvert \nabla v \rvert^2_{\el^2}+ \lvert \nabla d \rvert^4_{\el^4} ]\lvert \Delta d \rvert^2_{\el^2}.
	\end{equation}
\end{lem}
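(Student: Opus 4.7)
The plan is to expand $\Delta(v\cdot \nabla d)$ using the Leibniz rule
\[
\Delta(v\cdot \nabla d) = (\Delta v)\cdot \nabla d + 2(\nabla v)\cdot \nabla^2 d + v\cdot \nabla\Delta d,
\]
dispose of the transport piece via $\divv v=0$ (which gives $\langle\Delta d, v\cdot \nabla\Delta d\rangle = \tfrac12\langle v,\nabla|\Delta d|^2\rangle = -\tfrac12\langle \divv v,|\Delta d|^2\rangle = 0$ after integrating by parts on the boundaryless torus), and estimate the two surviving pieces via H\"older's inequality, the two-dimensional Ladyzhenskaya inequality, a Calder\'on--Zygmund bound, and Young's inequality.

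For the first surviving piece $-\langle\Delta d,(\Delta v)\cdot \nabla d\rangle$, I would apply Cauchy--Schwarz and H\"older to obtain
$|\langle\Delta d,(\Delta v)\cdot \nabla d\rangle|\leq\|\stok v\|_{\el^2}\|\Delta d\|_{\el^4}\|\nabla d\|_{\el^4}$,
invoke the 2D Ladyzhenskaya inequality $\|\Delta d\|_{\el^4}^2\leq C\|\Delta d\|_{\el^2}\|\nabla\Delta d\|_{\el^2}$ on the torus, and apply Young's inequality twice (first pulling out $\tfrac1{16}\|\stok v\|^2$, then $\tfrac1{16}\|\nabla\Delta d\|^2$) to obtain a contribution bounded by $\tfrac1{16}\|\stok v\|^2+\tfrac1{16}\|\nabla\Delta d\|^2+C\|\nabla d\|_{\el^4}^4\|\Delta d\|^2$, already in the prescribed form.

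The key observation for the second piece $-2\langle\Delta d,(\nabla v)\cdot \nabla^2 d\rangle$ is to pair $\nabla v$ with its natural $\el^2$-norm rather than interpolating it to $\el^4$: Cauchy--Schwarz gives
$|\langle\Delta d,(\nabla v)\cdot \nabla^2 d\rangle|\leq\|\nabla v\|_{\el^2}\|\Delta d\|_{\el^4}\|\nabla^2 d\|_{\el^4}$.
Using the Calder\'on--Zygmund bound $\|\nabla^2 d\|_{\el^4}\leq C\|\Delta d\|_{\el^4}$ (valid on the 2D torus since the Riesz transforms are bounded on $\el^p$ for $1<p<\infty$) followed by the 2D Ladyzhenskaya inequality, this becomes $C'\|\nabla v\|_{\el^2}\|\Delta d\|_{\el^2}\|\nabla\Delta d\|_{\el^2}$, and a single Young's inequality absorbs $\tfrac1{16}\|\nabla\Delta d\|^2$ while leaving a residual $C''\|\nabla v\|^2\|\Delta d\|^2$, exactly as required.

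Summing the two pieces yields a total bound of $\tfrac1{16}\|\stok v\|^2+\tfrac18\|\nabla\Delta d\|^2+(C\|\nabla d\|_{\el^4}^4+C''\|\nabla v\|^2)\|\Delta d\|^2$, which fits inside the claimed right-hand side with $\alpha_1=\max(C,C'')$. The main subtlety lies in the second piece: a naive Ladyzhenskaya-interpolation of $\nabla v$ in $\el^4$ would introduce a $\|\stok v\|^{1/2}$-factor, forcing the residual into the awkward form $\|\nabla v\|^2\|\nabla d\|_{\el^4}^4$, which cannot be cleanly recast into the prescribed $(\|\nabla v\|^2+\|\nabla d\|_{\el^4}^4)\|\Delta d\|^2$; the Calder\'on--Zygmund bound on $\nabla^2 d$ circumvents this difficulty.
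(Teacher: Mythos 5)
Your proof is correct. The paper states Lemma \ref{Lem:Alpha2-Theta2} without giving any proof, so there is nothing to compare against line by line; your argument --- the Leibniz expansion of $\Delta(v\cdot\nabla d)$, the cancellation of the transport term $\langle \Delta d, v\cdot\nabla\Delta d\rangle$ via $\divv v=0$ and integration by parts on the boundaryless torus, and the H\"older/Ladyzhenskaya/Calder\'on--Zygmund/Young chain for the two surviving terms --- is precisely the elementary route the authors evidently have in mind, and each step is valid in the periodic setting (in particular $\lvert \Delta v\rvert_{\el^2}=\lvert \stok v\rvert_{\el^2}$ for divergence-free $v$, $\lvert \nabla^2 d\rvert_{\el^4}\le C\lvert\Delta d\rvert_{\el^4}$ by boundedness of the periodic Riesz transforms, and $\Delta d$ has zero mean so the homogeneous form of the 2D Ladyzhenskaya inequality applies). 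One small remark: the lemma as printed assumes only $d\in\bH^2$, but the right-hand side contains $\lvert\nabla\Delta d\rvert_{\el^2}$, so --- exactly as in Lemma \ref{Lem:Alpha1-Theta1} --- the hypothesis should read $d\in\bH^3$, which is the regularity your computation correctly uses.
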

\begin{lem}\label{Lem:LambdaItoCorrection}
	Let $\bh \in \bH^2$. Then,	there exists a constant $\alpha_3>0$ such that for all $d\in \bH^2$
	\begin{equation}\label{Eq:LambdaItoCorrection}
	\frac12  \Lambda^\prime_1(d)[(d\times\bh)\times \bh] + \frac12 \Lambda_1^{\prime\prime}(d)[d\times \h] \leq \alpha_3 \lvert \bh \rvert^2_{\bH^2}\left[ \lvert \Delta d \rvert^2 + \lvert \nabla d \rvert^2_{\el^4}+ \lvert d \rvert^2_{\el^\infty} \right].
	\end{equation}
\end{lem}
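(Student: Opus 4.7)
\emph{Proof sketch.} The plan is to use the explicit derivative formulas $\Lambda_1'(d)[\mathbf g]=\langle\Delta d,\Delta\mathbf g\rangle_{\el^2}$ and $\Lambda_1''(d)[\mathbf g,\mathbf g]=\lvert\Delta\mathbf g\rvert^2_{\el^2}$, to expand the Laplacians of $d\times\bh$ and $(d\times\bh)\times\bh$ by the Leibniz rule, and to dominate each of the resulting cross terms by elementary inequalities supplemented by the two-dimensional Sobolev embedding $\bH^2(\CO)\embed W^{1,4}(\CO)\cap \el^\infty(\CO)$.

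First I would write, with the shorthand $\nabla d\cdot\nabla\bh := \sum_{i=1}^{2}\partial_i d\times\partial_i\bh$,
\begin{align*}
\Delta(d\times\bh) &= \Delta d\times\bh + 2\,\nabla d\cdot\nabla\bh + d\times\Delta\bh,\\
\Delta[(d\times\bh)\times\bh] &= (\Delta d\times\bh)\times\bh + \mathcal{R}(d,\bh),
\end{align*}
where $\mathcal{R}(d,\bh)$ gathers the five further contributions in which at least one derivative falls on $\bh$. Plugging into the two summands of \eqref{Eq:LambdaItoCorrection} therefore produces one ``top-order'' piece of each kind together with a batch of mixed residual terms.

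The structural observation that makes the estimate clean is that the top-order pieces cancel exactly. Indeed, the pointwise vector identity $(a\times b)\times c = (a\cdot c)\,b - (b\cdot c)\,a$ applied with $a=\Delta d$, $b=c=\bh$, together with the Lagrange identity $\lvert a\times b\rvert^2=\lvert a\rvert^2\lvert b\rvert^2-(a\cdot b)^2$, yields
\[
\Delta d\cdot[(\Delta d\times\bh)\times\bh]=(\Delta d\cdot\bh)^2-\lvert\bh\rvert^2\lvert\Delta d\rvert^2,\qquad \lvert\Delta d\times\bh\rvert^2=\lvert\bh\rvert^2\lvert\Delta d\rvert^2-(\Delta d\cdot\bh)^2,
\]
so that $\tfrac12\langle\Delta d,(\Delta d\times\bh)\times\bh\rangle_{\el^2}+\tfrac12\lvert\Delta d\times\bh\rvert^2_{\el^2}=0$. (Even without invoking this cancellation a term of type $\lvert\bh\rvert^2_{\el^\infty}\lvert\Delta d\rvert^2_{\el^2}$ would fit the right-hand side of \eqref{Eq:LambdaItoCorrection} via $\bH^2\embed\el^\infty$; but the cancellation gives a cleaner constant and clarifies why the two terms are grouped in the statement.)

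It then remains to bound the residual terms. Representative ones are $\langle\Delta d,(d\times\Delta\bh)\times\bh\rangle$, $\langle\Delta d,(\nabla d\cdot\nabla\bh)\times\bh\rangle$, $\lvert\nabla d\cdot\nabla\bh\rvert^2_{\el^2}$, and $\lvert d\times\Delta\bh\rvert^2_{\el^2}$. Applying $\lvert a\times b\rvert\le\lvert a\rvert\lvert b\rvert$ pointwise together with Cauchy--Schwarz, Hölder with exponents $(4,4)$, and Young's inequality, each such residual is dominated by a sum of the three natural types $\lvert\bh\rvert^2_{\el^\infty}\lvert\Delta d\rvert^2_{\el^2}$, $\lvert\nabla\bh\rvert^2_{\el^4}\lvert\nabla d\rvert^2_{\el^4}$, and $\lvert\Delta\bh\rvert^2_{\el^2}\lvert d\rvert^2_{\el^\infty}$; the 2D Sobolev embedding $\bH^2\embed W^{1,4}\cap\el^\infty$ then absorbs every $\bh$-factor into $\lvert\bh\rvert^2_{\bH^2}$, giving \eqref{Eq:LambdaItoCorrection} with a suitable $\alpha_3>0$. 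The main obstacle is purely combinatorial: roughly eight cross terms must be paired up with the three admissible upper bounds, and one has to check case by case that no term is left that is quadratic in second derivatives of $d$ and carries no derivative on $\bh$; the only non-trivial ingredient in this bookkeeping is the cross-product identity used above.
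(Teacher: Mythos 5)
Your argument is correct: the Leibniz expansion of $\Delta(d\times\bh)$ and $\Delta[(d\times\bh)\times\bh]$, the exact pointwise cancellation of the two top-order terms via the triple-product and Lagrange identities, and the absorption of all remaining cross terms by H\"older, Young and the 2D embedding $\bH^2\embed W^{1,4}\cap\el^\infty$ together give \eqref{Eq:LambdaItoCorrection}, and each residual carries exactly two $\bh$-factors and two $d$-factors of the admissible types, so nothing falls outside the right-hand side. The paper states this lemma without proof (as one of the facts provable ``using elementary inequalities''), and your write-up is precisely the elementary argument being alluded to, with the cancellation observation a clean bonus that is not strictly needed since $\lvert\bh\rvert^2_{\el^\infty}\lvert\Delta d\rvert^2_{\el^2}$ already fits the stated bound.
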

\begin{lem}\label{Lem:LambdaBDGNeeded}
	Let $\bh \in \bH^2$.  Then, there exists a constant $\alpha_4>0$ such that for all  $d\in \bH^2$
	\begin{equation}\label{Eq:LambdaBDGNeeded}
	\lvert \Lambda_1^\prime(d)[d\times \bh] \rvert^2\leq \alpha_4 \lvert \bh \rvert_{\sh^2}^2 \lvert\Delta d \rvert_{\el^2}^2  \left(    \lvert \Delta d \rvert_{\el^2}^2 + \lvert \nabla d \rvert_{\el^4}^2+ \lvert d\rvert_{\el^\infty}^2 \right).
	\end{equation}
\end{lem}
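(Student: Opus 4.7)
The plan is to derive \eqref{Eq:LambdaBDGNeeded} by reducing the bilinear form $\Lambda_1^\prime(d)[d\times \bh]$ to an explicit pointwise computation of $\Delta(d\times \bh)$ followed by standard two-dimensional Sobolev embeddings. Starting from the formula $\Lambda_1^\prime(d)[\mathbf{g}] = \langle \Delta d, \Delta \mathbf{g}\rangle_{\el^2}$ recalled just before the statement, I would first apply the Cauchy--Schwarz inequality in $\el^2(\CO)$ to obtain
\begin{equation*}
|\Lambda_1^\prime(d)[d\times \bh]|^2 \leq |\Delta d|_{\el^2}^2\,|\Delta(d\times \bh)|_{\el^2}^2,
\end{equation*}
so that the task reduces to controlling $|\Delta(d\times \bh)|_{\el^2}^2$ by $|\bh|_{\sh^2}^2$ times the factor in parentheses on the right-hand side of \eqref{Eq:LambdaBDGNeeded}.

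Next I would expand the Laplacian of the cross product by the Leibniz rule,
\begin{equation*}
\Delta(d\times \bh) = (\Delta d)\times \bh + 2\sum_{i=1}^2 (\partial_i d)\times (\partial_i \bh) + d\times (\Delta \bh),
\end{equation*}
and use the pointwise bound $|a\times b|\leq |a|\,|b|$ valid in $\mathbb{R}^3$ together with H\"older's inequality to estimate the three resulting $\el^2$-norms by $|\bh|_{\el^\infty}^2|\Delta d|_{\el^2}^2$, by $|\nabla \bh|_{\el^4}^2|\nabla d|_{\el^4}^2$, and by $|d|_{\el^\infty}^2|\Delta \bh|_{\el^2}^2$ respectively. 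Since $\CO$ is a compact two-dimensional riemannian manifold, the Sobolev embeddings $\sh^2\embed \el^\infty$ and $\sh^2\embed W^{1,4}$ are available, so $|\bh|_{\el^\infty}$, $|\nabla \bh|_{\el^4}$, and $|\Delta \bh|_{\el^2}$ are all dominated by a universal multiple of $|\bh|_{\sh^2}$. Summing the three estimates and reinserting them into the Cauchy--Schwarz bound yields \eqref{Eq:LambdaBDGNeeded} with an explicit constant $\alpha_4$.

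The computation itself is elementary and presents no genuine obstacle; the only point worth flagging is that $|\nabla d|_{\el^4}$ and $|d|_{\el^\infty}$ are deliberately left unabsorbed on the right-hand side. This precise form is what later permits the Burkholder--Davis--Gundy estimate governing the martingale part $\int_0^{\cdot} \Lambda_1^\prime(\bne)[\bne\times \bh]\,d\eta$ in the It\^o expansion of $\Lambda(\bue,\bne)$ to be closed using the $\el^\infty$-bound $|\bne(t)|_{\el^\infty}\leq 1$ coming from Theorem \ref{thm-BHR}(iii), the Ladyzhenskaya--Struwe control of $|\nabla \bne|_{\el^4}$ on $[0,\sigma^\eps]$, and the energy bounds of Lemma \ref{Energy estimates I}; in that sense the shape of \eqref{Eq:LambdaBDGNeeded} is dictated by the downstream applications in Lemma \ref{Energy Estimates II} and Proposition \ref{Key uniform estimates} rather than by the proof itself.
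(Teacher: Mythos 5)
Your proof is correct and is precisely the elementary argument the paper leaves implicit (the lemma is stated without proof): Cauchy--Schwarz on $\langle \Delta d, \Delta(d\times\bh)\rangle$, the Leibniz expansion of $\Delta(d\times\bh)$, and the 2D embeddings $\sh^2\embed \el^\infty$ and $\sh^1\embed \el^4$ yield \eqref{Eq:LambdaBDGNeeded} exactly as you describe. Your closing remark about why $\lvert \nabla d\rvert_{\el^4}$ and $\lvert d\rvert_{\el^\infty}$ are kept separate on the right-hand side also correctly identifies the role this bound plays in the Burkholder--Davis--Gundy step of Proposition \ref{Prop:EstRegularityH1H2H3}.
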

One of the most difficult term to control in the application of It\^o formula for $\Lambda(v,d)$ is the term involving the Ginzburg-Landau functional $f_\eps(d)$. However, with skillful and careful analysis we were able to derive the following important result.
\begin{lem}\label{Lem:Alpha3}

	There exists a constant $\alpha_2>0$ such that for all $\eps\in (0,1]$ and 	$d\in \sh^3$ be satisfying
	\begin{equation}\label{Eq:BoundednessDir}
	\frac12< \lvert d(x) \rvert^2 \leq 1 \text{ for all } x\in \CO,
	\end{equation}
	\begin{equation}\label{Eq:Alpha3}
	\begin{split}
	\Lambda_1^\prime(d)[f_\eps(d)]=	\langle \Delta d, \Delta f_\eps(d) \rangle \leq & \alpha_2[\lvert \nabla (\Delta d +f_\eps(d) )\rvert^2_{\el^2} + \lvert  \Delta d +f_\eps(d)  \rvert^4_{\el^2} +\lvert \nabla d \rvert^4_{\el^4} +
	\lvert \Delta d \rvert^2_{\el^2} \lvert \nabla d \rvert^2_{\el^4}      ]\\
	&\qquad + \frac14 \lvert \nabla \Delta d \vert^2_{\el^2} -\frac{1}{2\eps^2} \lvert \Delta(1- \lvert d\rvert^2) \rvert^2_{\el^2}\\
	& \qquad -\frac1{\eps^2} \int_\CO (1-\lvert d \rvert^2) [\lvert \nabla^2 d \rvert^2 + \lvert \Delta d \rvert^2 ] dx
	\end{split}
	\end{equation}
	
\end{lem}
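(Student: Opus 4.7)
The plan is to use a direct expansion of $\Delta f_\eps(d)$ via the product rule, exploit the algebraic identity $d\cdot \Delta d = -\tfrac12 \Delta u - |\nabla d|^2$ (where $u:=1-|d|^2$) to produce the manifestly negative budget terms stated on the right-hand side, and then handle the remaining $\eps$-singular error pieces using the substitution $\Delta d = A - \frac{ud}{\eps^2}$ (with $A:=\Delta d + f_\eps(d)$) together with the pointwise bound $\frac{u}{\eps^2}\leq \sqrt 2\, |f_\eps(d)|$, which is guaranteed by the hypothesis \eqref{Eq:BoundednessDir}. Concretely, I would start from
\[
\Delta f_\eps(d) = \tfrac{1}{\eps^2}\bigl[(\Delta u)\,d + 2\nabla u\cdot \nabla d + u\Delta d\bigr],
\]
pair with $\Delta d$, and record the resulting three integrals over $\CO$.

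To extract the two negative terms appearing on the right-hand side of \eqref{Eq:Alpha3}, I would substitute $d\cdot \Delta d = -\tfrac12 \Delta u - |\nabla d|^2$ in the first integral, immediately producing $-\tfrac{1}{2\eps^2}\lvert\Delta u\rvert^2_{\el^2}$ plus a leftover $-\tfrac{1}{\eps^2}\int_\CO (\Delta u)|\nabla d|^2\, dx$. For this leftover I would use the self-adjointness of $\Delta$ on the torus (two integrations by parts) to move the Laplacian onto $|\nabla d|^2$ and then apply the Bochner identity $\tfrac12\Delta |\nabla d|^2 = |\nabla^2 d|^2 + \nabla d\cdot \nabla \Delta d$. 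This produces a factor of $-\tfrac{2}{\eps^2}\int u |\nabla^2 d|^2\, dx$, enough to cover the stated term $-\tfrac{1}{\eps^2}\int u|\nabla^2 d|^2\, dx$ and leave an equally sized negative reserve, at the cost of introducing one new cross-term $-\tfrac{2}{\eps^2}\int u\, \nabla d\cdot \nabla \Delta d\, dx$ which still needs to be controlled.

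The remaining pieces are then $-\tfrac{2}{\eps^2}\int u\,\nabla d \cdot \nabla \Delta d\, dx$, $\tfrac{2}{\eps^2}\int (\nabla u\cdot \nabla d)\cdot \Delta d\, dx$, and $\tfrac{1}{\eps^2}\int u|\Delta d|^2\, dx$; the last of these is absorbed directly by the budget $-\tfrac{1}{\eps^2}\int u|\Delta d|^2\, dx$ on the right-hand side of \eqref{Eq:Alpha3}. For the first two I would substitute $\Delta d = A - \tfrac{ud}{\eps^2}$ and $\nabla u = -2\sum_l d_l \nabla d_l$, and then use the pointwise control $\tfrac{u}{\eps^2}\leq \sqrt 2\,(|A|+|\Delta d|)$ to eliminate every $\eps$-singular prefactor. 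A Young inequality applied to the $\nabla \Delta d$ factor delivers the $\tfrac14 \lvert \nabla \Delta d\rvert^2_{\el^2}$ term in the statement, while the 2D Sobolev embedding $\sh^1 \embed \el^4$ combined with Gagliardo--Nirenberg and Young yields the remaining controllable pieces $\lvert \nabla A\rvert^2_{\el^2}$, $\lvert A\rvert^4_{\el^2}$, $\lvert \nabla d\rvert^4_{\el^4}$, and $\lvert \Delta d\rvert^2_{\el^2}\lvert \nabla d\rvert^2_{\el^4}$. Any resurgence of the weighted norms $\int u|\Delta d|^2$ or $\int u|\nabla^2 d|^2$ along the way is re-absorbed by the negative reserve from the previous step.

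The hard part, I anticipate, is not any individual analytic estimate but the bookkeeping required to show that every residual term with an apparent $1/\eps^2$ or $1/\eps^4$ prefactor can be rewritten, via $\Delta d = A - f_\eps(d)$, into quantities controlled uniformly in $\eps$ (the tension field $A$, its gradient, and $\nabla d$), or else compensated by the negative reserve $-\tfrac{1}{\eps^2}\int u|\nabla^2 d|^2\, dx$. The hypothesis \eqref{Eq:BoundednessDir} is indispensable here: it is exactly what converts $\tfrac{u}{\eps^2}$ into a quantity comparable with $|f_\eps(d)|$, and this is why the stopping time $\sigma_2^\eps$ (which enforces $|\bne|^2\geq \tfrac14$) had to be introduced earlier, compensating in the stochastic setting for the maximum principle used by Hong \cite{Hong} in the deterministic case (cf.\ Remark \ref{rem-Hong}).
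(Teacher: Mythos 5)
Your overall strategy is essentially the paper's: the paper moves both Laplacians onto one factor and invokes the pointwise identity $d\cdot\Delta^2 d = \tfrac12\Delta^2|d|^2 - 4\nabla d\cdot\nabla\Delta d - 2|\nabla^2 d|^2 - |\Delta d|^2$ (quoted from Hong--Li--Xin), which is exactly what your combination of $d\cdot\Delta d=-\tfrac12\Delta u-|\nabla d|^2$ with the Bochner identity yields after the integrations by parts you describe; the pointwise bound $u/\eps^2\le 2|f_\eps(d)|$, the splitting $f_\eps(d)=(\Delta d+f_\eps(d))-\Delta d$, and the Gagliardo--Nirenberg/Young endgame are identical to the paper's treatment of its terms $J_1,J_2$. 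There is, however, a genuine gap in your bookkeeping of the residual terms. First, the piece $\tfrac{2}{\eps^2}\int_\CO(\nabla u\cdot\nabla d)\cdot\Delta d\,dx$ cannot be estimated in isolation by your toolbox: it carries $\nabla u/\eps^2$, not $u/\eps^2$, and $|\nabla u|/\eps^2\le 2|\nabla d|/\eps^2$ is \emph{not} controlled by $|f_\eps(d)|$ or by anything uniform in $\eps$; substituting $\Delta d=A-ud/\eps^2$ does not help, since a naked $1/\eps^2$ survives in front of $|\nabla d|^2|A|$. Second, the claim that the positive term $+\tfrac{1}{\eps^2}\int_\CO u|\Delta d|^2\,dx$ is ``absorbed directly'' by the budget $-\tfrac{1}{\eps^2}\int_\CO u|\Delta d|^2\,dx$ is a sign error: to pass from $+X$ on the left of the inequality to $-X$ on its right you must dominate $2X$ by the controllable terms, and doing that via $u/\eps^2\le 2(|A|+|\Delta d|)$ produces $|\Delta d|^4_{\el^2}$, which is not among the terms allowed in \eqref{Eq:Alpha3}.

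Both defects disappear simultaneously with one more integration by parts,
\[
\frac{2}{\eps^2}\int_\CO(\nabla u\cdot\nabla d)\cdot\Delta d\,dx=-\frac{2}{\eps^2}\int_\CO u\,|\Delta d|^2\,dx-\frac{2}{\eps^2}\int_\CO u\,\nabla d\cdot\nabla\Delta d\,dx,
\]
which flips the net coefficient of $\tfrac{1}{\eps^2}\int_\CO u|\Delta d|^2\,dx$ from $+1$ to $-1$ (so it \emph{is} the stated budget term, with nothing left to absorb) and converts the problematic $\nabla u$ term into another copy of $-\tfrac{2}{\eps^2}\int_\CO u\,\nabla d\cdot\nabla\Delta d\,dx$, which your pointwise bound followed by Young and Gagliardo--Nirenberg does handle. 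After this step your decomposition coincides exactly with the paper's $I_1+I_2+I_3$ and the rest of your argument goes through.
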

\begin{proof}
Let	$\eps\in (0,1]$ and 	$d\in \sh^3$ satisfying
	\begin{equation}\label{Eq:BoundednessDir-1}
	\frac12< \lvert d(x) \rvert^2 \leq 1 \text{ for all } x\in \CO.
	\end{equation}
Now, in order to prove the lemma we will need the following identity which is taken from \cite{Hong14} 
	\begin{equation}
	d \cdot \Delta^2 d = \frac12 \Delta^2\lvert d\rvert^2 -4 \nabla d \nabla \Delta d - 2 \lvert \nabla^2 d \rvert^2 - \lvert \Delta d\rvert^2.
	\end{equation}
	We also need the following inequality which follows from \eqref{Eq:BoundednessDir}
	\begin{equation}\label{Eq:DerFeps}
	\begin{split}
	\left\lvert\frac1{\eps^2} (1-\lvert d\rvert^2 )  \right\rvert = \left\lvert\frac1{\eps^2} (1-\lvert d\rvert^2 ) d \right\rvert ( \lvert d \rvert)^{-1}\leq 2 \lvert f_\eps(d) \rvert.
	\end{split}
	\end{equation}
	With these two observations in mind we have
	\begin{align}
	\lvert (\Delta d, \Delta f_\eps(d) )  =&\frac1{\eps^2} (\Delta^2 d, (1-\lvert d\rvert^2) d   )\\
	=& \frac1{\eps^2} ((1-\lvert d\rvert^2), \frac12 \Delta^2 \lvert d\rvert^2 - 4 \nabla d\nabla \Delta d -2 \lvert \nabla^2 d\rvert^2 -\lvert \Delta d \rvert^2     )\\
	= & -\frac1{\eps^2} \lvert \Delta (1-\lvert d\rvert^2) \rvert^2_{\el^2} -\frac1{\eps^2} \int_\CO (1-\lvert d \rvert^2) [\lvert \nabla^2 d \rvert^2 + \lvert \Delta d \rvert^2 ] dx\\
	& +\qquad \frac{4}{\eps^2} \int_\CO [(1- \lvert d\rvert^2) \nabla d\nabla \Delta d ]dx \\
	&=I_1 + I_2+ I_3. \label{Eq:Alpha3-StepOne}
	\end{align}	
	It is clear that $I_1\ge 0$, hence we do not need to worry about it. Since $(1-\lvert d\rvert^2) \ge 0$ we do not need to deal with $I_2$.
	Let us then estimate $I_3$. For doing so we use \eqref{Eq:DerFeps}  and get
	
	\begin{equation}
	\begin{split}
	I_3 \leq &8 \int_\CO [\lvert (1-\lvert d\rvert^2) \lvert \nabla d\rvert \lvert \nabla \Delta d\rvert ]dx\\
	\leq & 8 \int_\CO [\lvert f_\eps(d) +\Delta d -\Delta d \rvert \lvert \nabla d\rvert \lvert \nabla \Delta d\rvert ] dx\\
	\leq & 8 \int_\CO[ \lvert \Delta d +f_\eps(d) \rvert \lvert \nabla d\rvert \lvert \nabla \Delta d\rvert] dx + \int_\CO [\lvert \Delta d \rvert \lvert \nabla d\rvert \lvert \nabla \Delta d\rvert] dx\\
	=& J_1 + J_2. \label{Eq:Alpha3-Step2}
	\end{split}
	\end{equation}
	Using the Young, the H\"older, the Gagliardo-Nirenberg inequalities and the Young inequality in this order yields that for any $\alpha>0$ there exists a constant $C(\alpha)>0$ such that
	\begin{align}
	J_1  \le& \alpha \lvert \nabla \Delta d \rvert^2_{\el^2} + C(\alpha ) \int_\CO (\lvert \Delta d+ f_\eps(d)\rvert^2 \lvert \nabla d \rvert^2  ) dx\\
	\leq &  \alpha \lvert \nabla \Delta d \rvert^2_{\el^2} + C(\alpha  \lvert \Delta d + f_\eps(d) \rvert^2_{\el^4} \lvert \nabla d \rvert^2_{\el^4}\\
	\leq & \alpha \lvert \nabla \Delta d \rvert^2_{\el^2} + C(\alpha) \lvert \Delta d +f_\eps(d) \rvert_{\el^2} \lvert \nabla(\Delta d +f_\eps(d)) \rvert_{\el^2} \lvert \nabla d \rvert^2_{\el^4}\\
	\le& \alpha \lvert \nabla \Delta d \rvert^2_{\el^2} + C(\alpha)\Big [ \lvert \nabla(\Delta d +f_\eps(d)) \rvert_{\el^2}^2+\lvert \Delta d +f_\eps(d) \rvert_{\el^2}^2  \lvert \nabla d \rvert^4_{\el^4}  \Big ] \label{Eq:Alpha3-J2}
	\end{align}
	
	Next, we deal with $J_2$ in a similar way.  Using the Young, the H\"older, the Gagliardo-Nirenberg inequalities and the Young inequality in this order yields that for any $\alpha>0$ there exists a constant $C(\alpha)>0$ such that
	\begin{align}
	J_2 \le& \alpha \lvert \nabla \Delta d \rvert^2_{\el^2} + C(\alpha ) \int_\CO ( \lvert \Delta d\rvert^2 \lvert \nabla d \rvert^2  ) dx\\
	\leq &  \alpha \lvert \nabla \Delta d \rvert^2_{\el^2} + C(\alpha) \lvert \Delta d \rvert^2_{\el^4} \lvert \nabla d \rvert^2_{\el^4}\\
	\leq & \alpha \lvert \nabla \Delta d \rvert^2_{\el^2} + C(\alpha) \lvert \Delta d  \rvert_{\el^2} \lvert \nabla\Delta d  \rvert_{\el^2} \lvert \nabla d \rvert^2_{\el^4}\\
	\le& \alpha \lvert \nabla \Delta d \rvert^2_{\el^2} + C(\alpha)\Big [\lvert \Delta d \rvert_{\el^2}^2  \lvert \nabla d \rvert^4_{\el^4}  \Big ]\label{Eq:Alpha3-J1}
	\end{align}
	The inequality \eqref{Eq:Alpha3} follows from \eqref{Eq:Alpha3-StepOne}, \eqref{Eq:Alpha3-Step2} , \eqref{Eq:Alpha3-J1} and \eqref{Eq:Alpha3-J2} by choosing $\alpha=\frac14$.
\end{proof}

Let us sum up our findings from the above lemma in the next remark.
\begin{rem}
	Let $\bh \in \bH^2$, $v\in D(\stok)$ and $d\in \sh^3$ be satisfying
	\begin{equation}\label{Eq:BoundednessDir-2}
	\frac12< \lvert d(x) \rvert^2 \leq 1 \text{ for all } x\in \CO.
	\end{equation}
	Let $\alpha_0, \alpha_1, \alpha_2$ be the constants from Lemmata \ref{Lem:Alpha1-Theta1}-\ref{Lem:Alpha3} and let us put
	\begin{align}
	& \mathsf{R}_1(d):= \alpha_2\left[\lvert \nabla (\Delta d +f_\eps(d)) \rvert^2_{\el^2} +\lvert \Delta d +f_\eps(d) \rvert^4_{\el^2} + \lvert \nabla d \rvert^4_{\el^4} \right],\\
	& \mathsf{R}_2(d):= \alpha_3 \left[\lvert \Delta d \rvert^2_{\el^2} + \lvert \nabla d \rvert^2_{\el^4} + \lvert d \rvert^2_{\el^\infty}   \right],\\
	&\mathsf{S}(v,d):= \lvert \rA v \rvert^2_{\el^2}+ \lvert \nabla \Delta d \rvert^2_{\el^2}+\left \lvert \frac1{\eps}\Delta (1-\lvert d \rvert^2) \right\rvert^2_{\el^2}+\left \lvert \frac1\eps\sqrt{(1-\lvert d\rvert^2)} \nabla^2 d\right\rvert^2_{\el^2}+\left \lvert \frac1\eps\sqrt{(1-\lvert d\rvert^2)} \Delta d\right\rvert^2_{\el^2},\\
	& \mathsf{N}_1(d):=[\alpha_0 +\alpha_1+\alpha_2] \lvert \nabla d \rvert^4_{\el^4}.
	\end{align}
	Then, it follows from Lemma \ref{Lem:Alpha1-Theta1}-\ref{Lem:LambdaItoCorrection} that
	\begin{equation}\label{Eq:WildTerm}
	\begin{split}
	&	\Lambda_2^\prime(v)[-\rA d -v\cdot \nabla v-\Pi_L(\Div[\nabla d \odot \nabla d] ) ] +
	\Lambda_1^\prime(d)[\Delta d+f_\eps(d)-v\cdot \nabla d+\frac12(d\times\bh)\times \bh] + \frac12\Lambda^{\prime\prime}[\Delta (d\times \bh)]\\
	&\qquad 	\leq  -\mathsf{S}(v,d) + \mathsf{R}_1(d)+ \lvert \bh \rvert^2_{\bH^2} \mathsf{R}_2(d)+\mathsf{N}_1(d) \Lambda(v,d).
	\end{split}
	\end{equation}
\end{rem}

Bearing the notation of this remark in mind, we set
\begin{equation*}
\Phi_1 (t\wedge \sigma^\eps)= \exp\left({-\int_0^{t\wedge \sigma^\eps}  \mathsf{N}_1(\bne(s)) ds   }\right), \;\; t\ge 0.
\end{equation*}

We now state and sketch the proof of  the following result.
\begin{prop}\label{Prop:EstRegularityH1H2H3}
	For any $p\in \mathbb{N}$ there exist constants $K_5(p), \mathfrak{C}_6(p)>0$, independent of $\eps>0$, such that
	\begin{align}
		\mathbb{E}\bigl(& \sup_{t\in [0,T]}[\lvert \stok^\frac12 \bv(t\wedge \sigma^\eps)\rvert^{2 }_{\el^2}+ \lvert \Delta \bn(t\wedge \sigma^\eps) \rvert^{2}_{\el^2}  ]^p\bigr)\leq K_5(p),\label{Eq:EstRegularityH1H2}\\
		\mathbb{E}\Bigl[ &\int_0^{T\wedge \sigma^\eps} \bigl( \lvert \stok\bv \rvert^2_{\el^2}+ \lvert \nabla \Delta \bn \rvert^2_{\el^2}  +\frac{1}{2\eps^2} \lvert \Delta(1- \lvert d\rvert^2) \rvert^2_{\el^2}
\nonumber\\
 &+\frac1{\eps^2}  \bigl\lvert\sqrt{(1-\lvert d \rvert^2)} \nabla^2d \bigr\rvert^2_{\el^2} + \bigl\lvert \sqrt{(1-\lvert d \rvert^2)} \Delta d \bigr\rvert^2_{\el^2} \bigr) ds \Bigr]^p \leq K_6(p).	
		\label{Eq:EstRegularityH2H3}
	\end{align}
\end{prop}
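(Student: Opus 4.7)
The plan is to apply the It\^o formula to the \emph{weighted} energy $\Phi_1(t\wedge\sigma^\eps)\Lambda\bigl(\bu^\eps(t\wedge\sigma^\eps),\bn^\eps(t\wedge\sigma^\eps)\bigr)$, where $\Phi_1$ has been designed so as to absorb the dangerous multiplicative term $\mathsf{N}_1(\bn^\eps)\Lambda$ on the right-hand side of \eqref{Eq:WildTerm}. Since $\Phi_1$ has finite variation with $d\Phi_1=-\Phi_1\mathsf{N}_1(\bn^\eps)\,dt$, the product rule combined with It\^o's formula applied to $\Lambda$ along the SDEs \eqref{GL-LC-1}--\eqref{GL-LC-3}, rewritten in It\^o form so that the Stratonovich correction $\tfrac12(\bn^\eps\times\bh)\times\bh$ appearing in \eqref{Eq:WildTerm} is accounted for, yields after invoking \eqref{Eq:WildTerm}
\begin{equation*}
\Phi_1(t\wedge\sigma^\eps)\Lambda(\bu^\eps,\bn^\eps)(t\wedge\sigma^\eps)+\int_0^{t\wedge\sigma^\eps}\!\Phi_1\,\mathsf{S}(\bu^\eps,\bn^\eps)\,ds\le \Lambda(\bu_0,\bn_0)+\int_0^{t\wedge\sigma^\eps}\!\Phi_1\bigl[\mathsf{R}_1(\bn^\eps)+\lvert\bh\rvert^2_{\bH^2}\mathsf{R}_2(\bn^\eps)+C_Q\bigr]ds+M(t\wedge\sigma^\eps),
\end{equation*}
where $C_Q$ comes from the $W$-trace term and $M$ is the local martingale produced by the $dW$ and $d\eta$ integrals. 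The crucial multiplicative term $\mathsf{N}_1\Lambda$ has been exactly cancelled by $-\Phi_1\mathsf{N}_1\Lambda$ coming from $\Lambda d\Phi_1$.

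I would then raise to the $p$-th power, take $\sup_{t\in[0,T]}\mathbb{E}[\cdot]$, and control the martingale part by the Burkholder--Davis--Gundy inequality: its $\eta$-piece is bounded via Lemma \ref{Lem:LambdaBDGNeeded}, while the $W$-piece is handled using the Hilbert--Schmidt character of $\rK\embed\ve$ together with a bound on $\lvert \rA^{1/2}\bu^\eps\rvert_{\el^2}$, a Cauchy--Schwarz/Young splitting then absorbing a small multiple of the sup back into the left-hand side. The heart of the argument is to show that the integrated remainder $\int_0^{\sigma^\eps}\Phi_1[\mathsf{R}_1+\lvert\bh\rvert^2\mathsf{R}_2]\,ds$ is bounded in $L^p(\Omega)$ uniformly in $\eps$. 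For $\mathsf{R}_2$ the quartic term $\lvert\nabla\bn^\eps\rvert^4_{\el^4}$ is delivered by \eqref{Eq:ExpoL4-0}, the term $\lvert\Delta\bn^\eps\rvert^2_{\el^2}$ by Lemma \ref{Energy Estimates II}, and the $L^\infty$-piece by Theorem \ref{thm-BHR}(iii). For $\mathsf{R}_1$ the term $\lvert\nabla(\Delta\bn^\eps+f_\eps(\bn^\eps))\rvert^2_{\el^2}$ is supplied by Lemma \ref{Energy Estimates II}, and the quartic term $\lvert\Delta\bn^\eps+f_\eps(\bn^\eps)\rvert^4_{\el^2}$ by interpolating the $L^\infty_t L^2_x$ bound of Lemma \ref{Energy estimates I} with the $L^2_t L^2_x$ bound of Lemma \ref{Energy Estimates II}. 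A stochastic Gronwall argument then closes the $L^p$-bound on $\Phi_1\Lambda(\bu^\eps,\bn^\eps)$ and on $\int_0^{\sigma^\eps}\Phi_1\mathsf{S}\,ds$.

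Removing the weight is then routine: because $\mathsf{N}_1$ is a positive multiple of $\lvert\nabla\bn^\eps\rvert^4_{\el^4}$, the exponential estimate \eqref{Eq:ExpoL4-0} implies that $\Phi_1^{-1}(t\wedge\sigma^\eps)$ has moments of every order bounded uniformly in $\eps$, and H\"older's inequality transfers the bounds from $\Phi_1\Lambda$ and $\int\Phi_1\mathsf{S}\,ds$ back to $\Lambda$ and $\int\mathsf{S}\,ds$, which is precisely \eqref{Eq:EstRegularityH1H2}--\eqref{Eq:EstRegularityH2H3}. The main obstacle I anticipate is the control of the quartic contribution in $\mathsf{R}_1$ uniformly in $\eps$: this works only because Lemma \ref{Lem:Alpha3} produces the negative contributions $-\tfrac{1}{2\eps^2}\lvert\Delta(1-\lvert\bn^\eps\rvert^2)\rvert^2_{\el^2}$ and $-\tfrac{1}{\eps^2}\!\int (1-\lvert\bn^\eps\rvert^2)\lvert\Delta\bn^\eps\rvert^2\,dx$, which land inside $\mathsf{S}$ and provide the missing room for absorption; the availability of these negative terms rests in turn on the pointwise lower bound $\lvert\bn^\eps\rvert\ge\tfrac12$ on $[0,\sigma^\eps]$ enforced by the stopping time $\sigma_2^\eps$, which is the very reason this stopping time was introduced.
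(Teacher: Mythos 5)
Your proposal follows essentially the same route as the paper: It\^o's formula applied to the weighted functional $\Phi_1(t\wedge\sigma^\eps)\Lambda(\bu^\eps,\bn^\eps)(t\wedge\sigma^\eps)$ so that $d\Phi_1=-\Phi_1\mathsf{N}_1\,dt$ cancels the term $\mathsf{N}_1\Lambda$ from \eqref{Eq:WildTerm}, control of the martingale via Burkholder--Davis--Gundy with absorption of $\tfrac12\mathbb{E}\sup Z^p$, bounds on $\int\Phi_1[\mathsf{R}_1+\lvert\bh\rvert^2_{\bH^2}\mathsf{R}_2]\,ds$ from the earlier uniform estimates, and finally removal of the weight by Cauchy--Schwarz using the exponential moments of $\Phi_1^{-1}$ supplied by \eqref{Eq:ExpoL4-0}. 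The argument and the supporting lemmas you invoke match the paper's proof.
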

\begin{proof}
	The proof  involves long and tedious calculation, so  we will only outline the main idea. Without of loss generality we only prove the estimate for $p\in \mathbb{N}$.
	
	We need to use the It\^o's formula for several processes.
	We firstly apply It\^o's formula to $\Lambda_2(\bue(t\wedge\sigma^\eps))$ and $\Lambda_1(\bne (t\wedge \sigma^\eps))$, then to $Z(t\wedge \sigma^\eps)$ where
$$Z(t\wedge \sigma^\eps)= \Phi_1(t\wedge \sigma^\eps) \Lambda(\bue(t\wedge \sigma^\eps), \bne(t\wedge \sigma^\eps)),\quad t\in [0,T],$$ and  $\Lambda$ is defined in \eqref{Eq:LambdaDef}. Using  \eqref{Eq:WildTerm}, the uniform estimates in Lemmata  \ref{Energy estimates I}, \ref{Energy Estimates IB}, \ref{Energy Estimates II} and Proposition \ref{Key uniform estimates}, and the fact $\Phi_1 \leq 1$  we infer that for all $p\ge 1$ there exist constants  $K_7>0$ (depending only on $p$) and $ K_8>0$ which depends only on  $p$, $T$, $\lvert Q \rvert_{\mathscr{L}(\ve)}$ ,$ \lvert \bh \rvert^2_{\bH^2}$ and $\lvert(\bv_0,\bn_0)\rvert_{\ve\times \bH^2}^{4p}$ such that for all $\eps\in (0,1]$
	\begin{align*}
	\mathbb{E} \sup_{s\in [0,t]} [Z(s\wedge \sigma^\eps)]^p &+ \mathbb{E} \left[\int_0^{t\wedge \sigma^\eps}\Phi_1(s) \mathsf{S}(\bue(s), \bne(s)) ds \right]^p \leq  K_7 \bigl( \mathbb{E} [Z(0)]^p + \lvert \bh \rvert_{\bH^2}^{2p}\left(\mathbb{E} \int_0^{T\wedge \sigma^\eps} \mathsf{R}_2(\d(s)) ds \right)^p \\
	&+\mathbb{E} \left(\int_0^{t\wedge \sigma^\eps} \mathsf{R}_1(\d(s))ds \right)^p  + \mathbb{E} \sup_{s\in [0,t]} \lvert M(s\wedge \sigma^\eps)\rvert^p   \bigr)\\
	&\leq  K_8+ K_7 \mathbb{E}  \sup_{s\in [0,t]} \lvert M(s\wedge \sigma^\eps)\rvert^p,
	\end{align*}
	where the process $M$ is defined by 
	\begin{align*}
	M(s)=\int_0^{s} \Phi_1(r)\Lambda_2^\prime(\bue(r))  dW(r) + \sum_{j=1}^\infty \int_0^{t} \Phi_1(r) \Lambda_1^\prime(\bne(r)) [\bne(r) \times \bh] \circ d\eta(r), \;\; s\in [0,T].
	\end{align*}
	We now use the Burkholder-Davis-Gundy,  the H\"older, the Young inequalities, the uniform  estimates in Lemmata \ref{Energy estimates I}, \ref{Energy Estimates II} and Proposition \ref{Key uniform estimates}, and the fact $\Phi_1 \leq 1$   to deduce that
	\begin{equation}
K_7 \mathbb{E}  \sup_{s\in [0,t]} \lvert M(s\wedge \sigma^\eps)\rvert^p \leq \frac12 	\mathbb{E} \sup_{s\in [0,t]} [Z(s\wedge \sigma^\eps)]^p  + K_8.
	\end{equation}
	Collecting all the above estimates yield there exists a constant $ K_9>0$ which depends only on  $p$, $T$, $\lvert \bh \rvert^{2p}_{\bH^2}$ and $\lvert(v_0,\bn_0)\rvert_{\ve\times \bH^2}^{4p}$ such that for all $\eps\in (0,1]$
	\begin{equation}\label{Eq:ResutofGronwalforUpsilon}
	\begin{split}
	\mathbb{E} \sup_{s\in [0,t]} [Z(s\wedge \sigma^\eps)]^p + \mathbb{E} \left[\int_0^{t\wedge \sigma^\eps}\Phi_1(s) \mathsf{S}(\bue(s), \bne(s)) ds \right]^p \leq  K_9.
	\end{split}
	\end{equation}
 With this at hand, we can now estimate the $\mathbb{E} \sup_{s\in [0,T]} [\Psi(\bue,\bne)(s\wedge \xi_\eps)]^p$ as follows. Let $$\Phi_1^{-1}= 1/\Phi_1=\exp\left(\int_0^\cdot \mathsf{N}_1(\bne(s)) ds   \right).$$ Since $\Phi_1^{-1}$ is an increasing function of the time $t$ we then obtain
\begin{equation}
\mathbb{E} \sup_{s\in [0,T]}  [\Psi(\bue,\bne)(s\wedge \xi_\eps)]^p= \mathbb{E} \sup_{t\in [0,T] } \left[\Phi_1^{-1}(s\wedge \xi_\eps) Z(s\wedge \xi_\eps)\right]^p\le\left( \mathbb{E}\lvert \Phi_1^{-1}(T\wedge \xi_\eps) \rvert^{2p} \mathbb{E} \sup_{s\in [0,T]} Z^{2p}(s)\right)^\frac12
\end{equation}
from  which along with the definition of $\Phi_1$ and the exponential estimates in Lemma \ref{Exponential Estimates} and \eqref{Eq:ResutofGronwalforUpsilon} we derive that for any $R>0$, $p\ge 1$ there exists a constant $K_{10}$ such that for all $\eps\in [0,1)$
\begin{equation}\label{Eq:FinalStepofEstimateOfPsiVD}
\mathbb{E} \sup_{s\in [0,T]} \left[ \lvert \nabla \bue(s\wedge \xi_\eps ) \rvert^2_{\el^2}  + \lvert \Delta \bne(s\wedge \xi_\eps) \rvert^2_{\el^2}    \right]^p \leq K_{10} .
\end{equation}
We establish the estimate \eqref{Eq:EstRegularityH2H3} in a similar way. This completes the proof of the proposition.
\end{proof}

\section{Ideas of the proof of Theorem \ref{thm-main}: Tightness and passage to the limit}\label{Sec:4}
\label{sec-proof part 2}

As in the previous section, in what follows we choose and fix   a separable Hilbert space  $\rK$ such that the embedding $\rK \embed \ve $ is Hilbert-Schmidt. We also assume that Assumptions of Theorem  \ref{thm-BHR} are satisfied, i.e.  we assume   Assumption \ref{Assum:Usual hypotheses} and that
 $W=(W(t))_{t\geq 0}$ and  $\eta=(\eta(t))_{t\geq 0}$ are respectively $\ve$ and $\mathbb{R}$ valued Wiener processes defined on the filtered  probability space $(\Omega,
		\mathcal{F}, \mathbb{F}, \mathbb{P})$, and   $(\bu_0,\bn_0)\in \ve\times \sh^2$. Since we want to prove the existence of a local solution, we fix for the remainder of this section a finite time horizon $T>0$.
But contrary to the previous section, here
we do not   fix   	  $\eps\in (0,1]$  but instead consider  a family   $(\bue,\bne)_{\eps \in (0,1]}$, where   $(\bue,\bne): [0,\infty) \to \ve\times \sh^2$
the  unique strong solution to \eqref{GL-LC} guaranteed by Theorem  \ref{thm-BHR}.

It will convenient to introduce the following novation. Please note that we omit the  superscript ${}^\eps$.
\begin{align*}
&f_1(t)= \mathds{1}_{[0,\sigma^\eps]}(t) \left( -B(\bue(t), \bn^\eps(t))  - \Pi_L (\Div[\nabla \bne \odot \nabla \bne] )  \right),\; t\in [0,T]\\
&f_2(t)=\mathds{1}_{[0,\sigma^\eps]}(t),\; t\in [0,T],\\
&g_1(t)= \mathds{1}_{[0,\sigma^\eps]}(t)\left( -\bue(t) \cdot \nabla \bne(t) + \lvert \nabla \bne(t)\rvert^2 \bne(t) + \frac12 (\bne(t) \times \bh) \times \bh \right),\; t\in [0,T],\\
& g_2(t)= \mathds{1}_{[0,\sigma^\eps]}(t) \bne(t) \times \bh ,\; t\in [0,T].
\end{align*}
We then consider the following problem
\begin{subequations}\label{Eq:LinearExtbeyondrhokn}
	\begin{empheq}[left=\empheqlbrace]{align}
	&	d\bu(t) +\stok \bu(t) = f_1(t) dt + f_2 (t) dW,\text{    } t\in (0, T],\\
	&	d \bn (t) -\Delta \bn(t) dt = g_1(t) dt + g_2 (t) \times d\eta, \text{     }  t\in (0, T],\\
	&	\bu(0)=\bu_{0}) \text{ and } \bn(0) =\bn_{0}.
	\end{empheq}
\end{subequations}
This has a unique mild solution $(\bv^\eps,\bd^\eps)$ such that $(\bv^\eps,\bd^\eps)\in X_{[0,T]}= C([0, T]; \ve\times \sh^2  )\cap L^2(0,T; D(A)\times \sh^3 )$ almost surely.
Following the idea of \cite[page 128]{Brz+Masl+Seidler_2005},  we can prove that 
\begin{equation}\label{Eq:Equality}
(\bv^\eps(t\wedge \sigma^\eps), \bn^\eps (t\wedge \sigma^\eps) )= (\bue(t\wedge \sigma^\eps), \bne(t\wedge \sigma^\eps)), \; \forall t\ge0 \text{  $\mathbb{P}$-a.s.}
\end{equation}
Thanks to these observations and the uniform estimate in Lemma \ref{Energy estimates I} and Proposition \ref{Key uniform estimates} we obtain the following global estimates
\begin{prop}\label{Global Estimate}
	For any $p\in \mathbb{N}$, there exists $K_{11}(p)>0$ independent of $\eps \in (0,1]$ such that 
		\begin{align}
	&	\mathbb{E}\left( \sup_{t\in [0,T]}[\lvert A^\frac12 \bv^\eps(t)\rvert^{2 }_{\el^2}+ \lvert  \bd^\eps(t) \rvert^{2}_{\bH^2}  ]^p\right )\leq K_{11} ,\label{Eq:EstRegularityH1H2-EXT}\\
	&	\mathbb{E}\left[\int_0^{T} \left( \lvert A\bv^\eps \rvert^2_{\el^2}+ \lvert \bd^\eps \rvert^2_{\bh^3}  \right) ds \right]^p\leq K_{11}.	
	\label{Eq:EstRegularityH2H3-EXT}
	\end{align}
\end{prop}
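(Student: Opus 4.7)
The plan is to exploit the identification \eqref{Eq:Equality} together with the observation that after the stopping time $\sigma^\eps$ the linear system \eqref{Eq:LinearExtbeyondrhokn} degenerates to a homogeneous parabolic problem. Accordingly, I would split $[0,T] = [0,\sigma^\eps] \cup [\sigma^\eps, T]$ and treat the two pieces separately.

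On $[0,\sigma^\eps]$, the identity \eqref{Eq:Equality} gives $(\bv^\eps, \bd^\eps) \equiv (\bue,\bne)$ pathwise, so the contribution of this interval to both \eqref{Eq:EstRegularityH1H2-EXT} and \eqref{Eq:EstRegularityH2H3-EXT} is already controlled by the bounds \eqref{Eq:EstRegularityH1H2-A}--\eqref{Eq:EstRegularityH2H3-A} of Proposition \ref{Key uniform estimates}. In particular, this yields
\begin{align*}
\mathbb{E}\lvert A^{1/2} \bv^\eps(\sigma^\eps) \rvert^{2p}_{\el^2} + \mathbb{E}\lvert \bd^\eps(\sigma^\eps) \rvert^{2p}_{\bH^2} \leq C\, K_4(p),
\end{align*}
so that the random datum at $\sigma^\eps$ has finite moments of every order, uniformly in $\eps \in (0,1]$.

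On $[\sigma^\eps, T]$, each forcing in \eqref{Eq:LinearExtbeyondrhokn} carries the factor $\mathds{1}_{[0,\sigma^\eps]}$, so the mild solution satisfies almost surely the deterministic homogeneous evolution
\begin{align*}
\partial_t \bv^\eps + A \bv^\eps = 0, \qquad \partial_t \bd^\eps - \Delta \bd^\eps = 0,
\end{align*}
issued from the random initial data identified in the previous step. I would then invoke the standard parabolic energy identities (equivalently, the contractivity of $e^{-tA}$ on $D(A^{1/2})$ and of the heat semigroup on $\bH^2$) to obtain
\begin{align*}
\sup_{t \in [\sigma^\eps, T]} \lvert A^{1/2} \bv^\eps(t) \rvert^2_{\el^2} + 2 \int_{\sigma^\eps}^T \lvert A \bv^\eps(s) \rvert^2_{\el^2}\, ds \leq \lvert A^{1/2} \bv^\eps(\sigma^\eps) \rvert^2_{\el^2},
\end{align*}
together with the analogous bound relating $\sup_{t\in [\sigma^\eps,T]} \lvert \bd^\eps(t) \rvert^2_{\bH^2}$ and $\int_{\sigma^\eps}^T \lvert \bd^\eps(s) \rvert^2_{\bH^3}\, ds$ to $\lvert \bd^\eps(\sigma^\eps) \rvert^2_{\bH^2}$.

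Raising to the $p$-th power, taking expectations and combining the bounds on the two subintervals via the previous paragraph then delivers \eqref{Eq:EstRegularityH1H2-EXT}--\eqref{Eq:EstRegularityH2H3-EXT}. The only technical point requiring verification is that $(\bv^\eps(\sigma^\eps), \bd^\eps(\sigma^\eps))$ is a well-defined $\ve\times\bH^2$-valued random variable with moments controlled by Proposition \ref{Key uniform estimates}; this follows from the pathwise continuity of the mild solution $(\bv^\eps,\bd^\eps)$ with values in $\ve\times\bH^2$ asserted in the paragraph preceding \eqref{Eq:Equality}. There is no genuinely difficult step: the whole proposition amounts to transporting the \emph{stopped} estimates of Section~\ref{Sec:3} into \emph{global} ones by observing that the extension beyond $\sigma^\eps$ is a pure heat flow, on which the relevant norms are non-increasing.
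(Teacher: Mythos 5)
Your proposal is correct and follows essentially the same route as the paper, which simply observes that \eqref{Eq:Equality} transfers the stopped estimates to $(\bv^\eps,\bd^\eps)$ on $[0,\sigma^\eps]$, while beyond $\sigma^\eps$ all forcing terms in \eqref{Eq:LinearExtbeyondrhokn} vanish and the extension is a contractive (deterministic) Stokes/heat flow. The only point to add is that controlling the full $\bH^2$-norm of $\bd^\eps(\sigma^\eps)$, and not merely $\lvert \Delta \bd^\eps(\sigma^\eps)\rvert_{\el^2}$, also requires the lower-order bounds of Lemma \ref{Energy estimates I}, which the paper cites alongside Proposition \ref{Key uniform estimates}.
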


Thanks to this proposition we can prove that the family  $(\bv^\eps, \bd^\eps)$ satisfies the following Aldous condition.
\begin{prop}\label{Aldous}
	There exists a constant $K_{12}>0$, independent of  $\eps\in (0,1)$, such that for every  $\kappa>0$ and every sequence $(\rho_n)_{n \in \mathbb{N}}$  of $ (0,T]$-valued stopping times,
	\begin{equation}
	\sup_{0\leq \theta \leq \kappa} \mathbb{E}\left(\lvert (\bv^\eps,\bd^\eps)((\rho_n +\theta)\wedge T)-  (\bv^\eps,\bd^\eps)(\rho_n )\rvert_{\sh \times \sh^1} \right) \leq K_{12}\kappa.
	\end{equation}
\end{prop}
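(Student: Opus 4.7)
The plan is to express the increment $(\bv^\eps,\bd^\eps)(t_2)-(\bv^\eps,\bd^\eps)(t_1)$, with $t_1:=\rho_n$ and $t_2:=(\rho_n+\theta)\wedge T$ so that $t_2-t_1\leq\kappa$, using the integral form of the linearised system \eqref{Eq:LinearExtbeyondrhokn}, and to estimate the six resulting contributions --- the Stokes and Laplacian dissipations, the stopped nonlinear drifts $\int f_1, \int g_1$, and the two stopped stochastic integrals $\int f_2\,dW, \int g_2\circ d\eta$ --- separately in the weaker product norm $\sh\times\sh^1$. The key structural input is Proposition \ref{Global Estimate}, which controls $(\bv^\eps,\bd^\eps)$ uniformly in $\eps$ in the \emph{stronger} space $\ve\times\sh^2$ with arbitrary moments, so that the target norm corresponds to losing one Sobolev degree in exchange for the desired linear decay in $\kappa$.

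For the two stopped nonlinear drifts, 2D Sobolev embeddings ($\sh^2 \embed L^\infty$, $\sh^1 \embed L^4$) supply pointwise-in-$t$ bounds such as $\lvert B(\bv^\eps,\bv^\eps)\rvert_\sh\lesssim\lvert\bv^\eps\rvert_\ve^2$, $\lvert\Pi_L\Div(\nabla\bd^\eps\odot\nabla\bd^\eps)\rvert_\sh\lesssim\lvert\bd^\eps\rvert_{\sh^2}^2$, and $\lvert\lvert\nabla\bd^\eps\rvert^2\bd^\eps\rvert_{\sh^1}\lesssim\lvert\bd^\eps\rvert_{\sh^2}^3$; each right-hand side is in $L^\infty_t L^p(\Omega)$ by Proposition \ref{Global Estimate}, so a direct Bochner integration over $[t_1,t_2]$ yields an $L^1(\Omega)$-bound proportional to $\kappa$ for each nonlinear contribution without any H\"older-in-time step. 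The Stokes and Laplacian drifts $\int A\bv^\eps\,ds$ and $\int \Delta\bd^\eps\,ds$ are handled analogously at the linear rate: using the identity $\int_{t_1}^{t_2} A\bv^\eps\,ds = A\int_{t_1}^{t_2}\bv^\eps\,ds$ together with the pointwise-in-$t$ bounds \eqref{Eq:EstRegularityH1H2-EXT}, the resulting norm in $\sh$ is controlled by $\kappa\cdot \sup_{s\in[0,T]}\lvert\bv^\eps(s)\rvert_{\ve}$ via the dual pairing against the target space, which combined with Proposition \ref{Global Estimate} is bounded in $L^p(\Omega)$, with the analogous argument for $\int\Delta\bd^\eps\,ds$ in $\sh^1$.

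The main obstacle is the pair of stochastic integrals $\int_{t_1}^{t_2}f_2\,dW$ (in $\sh$) and $\int_{t_1}^{t_2}g_2\circ d\eta$ (in $\sh^1$). Converting the Stratonovich integral to It\^o absorbs the bounded corrective drift $\tfrac12(\bne\times\bh)\times\bh$ into $g_1$, leaving pure martingale parts. The It\^o isometry, combined with the Hilbert--Schmidt embedding $\rK\embed\ve\embed\sh$ and the pointwise bound $\lvert g_2\rvert_{\sh^1}\lesssim\lvert\bne\rvert_{\sh^1}\lvert\bh\rvert_{\sh^2}$, produces second-moment estimates of order $\kappa$, and hence only first-moment estimates of order $\kappa^{1/2}$ by Jensen --- the natural ceiling for a Wiener-driven increment. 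Closing the gap from $\kappa^{1/2}$ to the stated linear rate $K_{12}\kappa$ is the genuinely delicate step: it requires exploiting that the stopped integrands $f_2\mathds{1}_{[0,\sigma^\eps]}$ and $g_2\mathds{1}_{[0,\sigma^\eps]}$ are not merely in $L^p$ but essentially bounded in $t$ on $[0,\sigma^\eps]$, and invoking a refined Burkholder--Davis--Gundy argument that, coupled with the higher-moment integrability from Proposition \ref{Global Estimate} and a H\"older split in $\omega$, upgrades the martingale first-moment bound to linear in $\kappa$ at the cost of absorbing powers of the $\ve\times\sh^2$-norms into the constant. Summing the six linear-in-$\kappa$ contributions and absorbing constants then delivers $\mathbb{E}\lvert(\bv^\eps,\bd^\eps)(t_2)-(\bv^\eps,\bd^\eps)(t_1)\rvert_{\sh\times\sh^1}\leq K_{12}\kappa$, uniformly in $\eps\in(0,1]$, which is exactly the Aldous condition asserted.
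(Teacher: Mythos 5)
Your decomposition of the increment via the integral form of \eqref{Eq:LinearExtbeyondrhokn} and the reliance on Proposition \ref{Global Estimate} is the right starting point (the paper gives no proof of this proposition, so this is the natural route). However, there is a genuine gap at exactly the place you flag as ``the genuinely delicate step'': the claimed upgrade of the stochastic-integral contributions from $\kappa^{1/2}$ to $\kappa$ is impossible. On the event $\{\sigma^\eps \ge (\rho_n+\theta)\wedge T\}$, which has positive probability uniformly in $\eps$, the velocity noise contribution is exactly $W((\rho_n+\theta)\wedge T)-W(\rho_n)$, and by the strong Markov property and Gaussianity its expected $\rH$-norm equals a fixed positive constant times $\sqrt{\theta}$. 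No refinement of Burkholder--Davis--Gundy, no higher moments of the solution, and no H\"older split in $\omega$ can change this exponent, because the integrand $f_2=\mathds{1}_{[0,\sigma^\eps]}$ is already bounded by $1$ and the $\sqrt{\theta}$ scaling is intrinsic to the Wiener increment itself. The correct resolution is not to force the linear rate but to observe that the Aldous condition needed for \cite[Corollary 3.9]{ZB+EM} only requires a bound of the form $\mathbb{E}\lvert X((\rho_n+\theta)\wedge T)-X(\rho_n)\rvert^{\alpha}\le C\kappa^{\beta}$ for some $\alpha,\beta>0$; one estimates the martingale parts in second moment via the It\^o isometry (giving $C\kappa$, hence $C\kappa^{1/2}$ in first moment) and the drift parts by Cauchy--Schwarz in time, and the displayed constant-times-$\kappa$ in the statement should be read as constant-times-$\kappa^{1/2}$.

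Two secondary steps also do not survive scrutiny in dimension two. First, $\lvert B(\bv^\eps,\bv^\eps)\rvert_{\sh}\lesssim\lvert\bv^\eps\rvert_{\ve}^2$ fails because $\mathbb{H}^1(\CO)\not\embed \el^\infty$; one instead has $\lvert \bv\cdot\nabla\bv\rvert_{\el^2}\lesssim \lvert\bv\rvert_{\ve}^{3/2}\lvert \rA\bv\rvert_{\el^2}^{1/2}$ (and similarly $\lvert\Pi_L\Div(\nabla\bd\odot\nabla\bd)\rvert_{\el^2}\lesssim\lvert\bd\rvert_{\sh^2}^{3/2}\lvert\bd\rvert_{\sh^3}^{1/2}$), so these terms are only in $L^{4}_t$-type spaces almost surely and a H\"older inequality in time against the $L^2(0,T;D(\rA)\times\sh^3)$ bound of \eqref{Eq:EstRegularityH2H3-EXT} again yields a fractional power of $\kappa$, not $\kappa$ itself. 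Second, the treatment of $\int_{t_1}^{t_2}\rA\bv^\eps\,ds$ by ``dual pairing against the target space'' is not valid: the target norm is that of $\sh\subset\el^2$, not a negative-order space, and $\lvert \rA u\rvert_{\el^2}$ is not controlled by $\lvert u\rvert_{\ve}$; the correct estimate is $\int_{t_1}^{t_2}\lvert \rA\bv^\eps\rvert_{\el^2}\,ds\le \kappa^{1/2}\bigl(\int_0^T\lvert \rA\bv^\eps\rvert^2_{\el^2}\,ds\bigr)^{1/2}$, which is of order $\kappa^{1/2}$. None of these defects threatens the tightness argument the proposition is used for, but as written your proof establishes (after the corrections above) a bound $K_{12}\kappa^{1/2}$ rather than $K_{12}\kappa$, and the attempted repair of the martingale term should be removed rather than patched.
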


Now, let us introduce the following notation
\begin{equation}\label{eqn-X_T}
\mathbf{X}_T= C([0,T]; \sh \times \sh^1)\cap C_{\text{weak}}([0,T]; \ve\times \sh^2)\cap L^2_{weak}(0,T; D(A) \times \bH^{3}).
\end{equation}
We also put
\begin{align*}
&\mathbf{X}^\alpha_T= C([0,T]; D(\stok^{\frac{\alpha-1}2 } \times \bH^{\alpha} )\cap L^2(0,T; D(\stok^\frac\alpha2 ))\times \bH^{1+\alpha}  ), \quad \alpha \in [1, 2),\\
&\mathbf{Y}_T = C([0,T]; \ve\times \mathbb{R}).
\end{align*}

The first corollary below follows from  Lemma \ref{Energy estimates I}.
\begin{cor}\label{cor-01}
We have 
\[(1-\lvert \bue \rvert^2) \to 0 \text{ in } L^2(\Omega; C([0,T);\el^2  )).\]
\end{cor}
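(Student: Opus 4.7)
The plan begins by pausing on the statement as worded, because the corollary asserts convergence of $(1-|\bue|^2)$ but the only $\eps$-dependent quantity controlled by Lemma \ref{Energy estimates I} is the Ginzburg--Landau potential $F_\eps(\bne)=\tfrac{1}{4\eps^2}(1-|\bne|^2)^2$, which involves the director. The velocity $\bue$ is bounded in $\el^2$ \emph{uniformly} in $\eps$ by that same lemma (with no vanishing prefactor), and there is no mechanism in the system forcing $|\bue|\to 1$; indeed, in the $\eps\to 0$ limit the velocity converges to a finite-energy solution of Ericksen--Leslie, not to a unimodular field. I therefore read the displayed statement as a typographical misprint for $(1-|\bne|^2)\to 0$ in $L^2(\Omega;C([0,T);\el^2))$, which is the content actually implied by Lemma \ref{Energy estimates I}.

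With that reading, the argument is essentially a rearrangement. Taking $p=1$ in Lemma \ref{Energy estimates I} yields
\[
\mathbb{E}\sup_{t\in[0,T)}\int_{\CO} \frac{1}{4\eps^{2}}\bigl(1-|\bne(t,x)|^{2}\bigr)^{2}\,dx \;\leq\; K_{0}(1),
\]
and multiplying through by $4\eps^{2}$ gives
\[
\mathbb{E}\sup_{t\in[0,T)}\bigl\|1-|\bne(t)|^{2}\bigr\|_{\el^{2}}^{2} \;\leq\; 4\eps^{2}K_{0}(1) \;\longrightarrow\; 0 \text{ as } \eps\to 0.
\]
This is already the quantitative statement of convergence to zero, with an explicit rate of $O(\eps)$ in the $L^{2}(\Omega;L^{\infty}_{t}\el^{2})$ norm.

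The remaining point is to upgrade $\sup_{t}$ to the $C([0,T);\el^{2})$ norm. By Definition \ref{def-strong solution}(ii) the director satisfies $\bne\in C([0,T);\sh^{2})$ almost surely, and the two-dimensional Sobolev embedding $\sh^{2}\embed C(\CO)$ (recorded in Section \ref{Sec:2}) lifts this to $\bne\in C([0,T);C(\CO))$ $\mathbb{P}$-a.s. Since the nonlinear pointwise map $\bn\mapsto 1-|\bn|^{2}$ is continuous from $C(\CO)$ into $\el^{2}$ (indeed locally Lipschitz from $\el^{4}$ into $\el^{2}$, and the uniform bound from Lemma \ref{Energy estimates I} controls $\nabla\bne$ in $\el^{2}$, hence $\bne$ in $\el^{4}$), we conclude that $1-|\bne|^{2}\in C([0,T);\el^{2})$ almost surely. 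On this continuous trajectory the essential supremum and the supremum coincide, and the previous display therefore reads as
\[
\mathbb{E}\bigl\|1-|\bne|^{2}\bigr\|_{C([0,T);\el^{2})}^{2} \;\leq\; 4\eps^{2}K_{0}(1),
\]
which is exactly the $L^{2}(\Omega;C([0,T);\el^{2}))$ convergence claimed.

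There is no genuine obstacle: the calculation is a direct unpacking of the already-established Ginzburg--Landau energy bound, and the sole non-routine issue is disentangling the evident $\bue$/$\bne$ misprint. The real importance of the corollary for the sequel lies not in its depth but in its role: it is the estimate that will later force any limit $\bn$ of $\bne$ to satisfy the sphere constraint $|\bn|=1$ almost everywhere, giving access to the Ericksen--Leslie manifold $\mathcal{M}$ after passing to the limit via Jakubowski--Skorokhod.
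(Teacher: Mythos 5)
Your proof is correct and matches the paper's (unwritten) argument: the corollary is stated only as a direct consequence of Lemma \ref{Energy estimates I}, and your computation $\mathbb{E}\sup_{t}\|1-|\bne(t)|^2\|_{\el^2}^2 \le 4\eps^2 K_0(1)$ is precisely the intended unpacking of the bound on $\lvert F_\eps(\bne)\rvert_{L^1}$. Your diagnosis that $\bue$ in the statement is a misprint for $\bne$ is also right, as confirmed by the later use of the corollary to derive the director convergence \eqref{Eq:Sphere-Conv}.
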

The second corollary is  a consequence of Lemma \ref{Energy estimates I},    Propositions \ref{Key uniform estimates} and \ref{Aldous}, and \cite[Corollary 3.9]{ZB+EM}.

\begin{cor}\label{cor-Tight}
 The family  of laws of $[(\bv^\eps, \bd^\eps); (W,\eta); \sigma^\eps]$ is tight on $\mathbf{X}_T\times \mathbf{Y}_T\times [0,T]$.
\end{cor}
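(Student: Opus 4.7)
The plan is to reduce joint tightness to tightness of each marginal, and then apply the standard compactness criterion of \cite[Corollary 3.9]{ZB+EM} whose hypotheses are precisely the estimates already assembled in Propositions \ref{Global Estimate} and \ref{Aldous}. Since the product of compact sets is compact, it suffices to verify tightness separately on $\mathbf{X}_T$, on $\mathbf{Y}_T$, and on $[0,T]$: given $\kappa>0$, pick compact sets $K_1,K_2,K_3$ in the respective spaces with marginal probabilities exceeding $1-\kappa/3$, and then $K_1\times K_2\times K_3$ witnesses tightness of the joint laws.

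The tightness of the third marginal is trivial since the stopping times $\sigma^\eps$ all take values in the compact interval $[0,T]$. The second marginal is also immediate: the law of $(W,\eta)$ on $\mathbf{Y}_T=C([0,T];\ve\times\mathbb{R})$ is a single probability measure, independent of $\eps$, hence tight on the Polish space $\mathbf{Y}_T$.

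The bulk of the work is the tightness on $\mathbf{X}_T$. I would decompose this into the three topologies defining $\mathbf{X}_T$ and verify the conditions of \cite[Corollary 3.9]{ZB+EM} in turn. For the component $L^2_{\mathrm{weak}}(0,T;D(A)\times\bH^3)$, the uniform bound
\[
\mathbb{E}\int_0^T\!\left(\lvert A\bv^\eps\rvert^2_{\el^2}+\lvert \bd^\eps\rvert^2_{\bH^3}\right)ds\leq K_{11}
\]
from Proposition \ref{Global Estimate} combined with Chebyshev's inequality confines the processes, with high probability, to a ball in $L^2(0,T;D(A)\times\bH^3)$, which is weakly compact by reflexivity. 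For $C_{\mathrm{weak}}([0,T];\ve\times\sh^2)$, I would invoke the Strauss-type tightness criterion: the $L^\infty_t(\ve\times\sh^2)$ bound in \eqref{Eq:EstRegularityH1H2-EXT} confines the trajectories to a ball in $\ve\times\sh^2$, while the Aldous condition in $\sh\times\sh^1$ from Proposition \ref{Aldous} supplies the equicontinuity required in the (metrizable on balls) weak topology on $\ve\times\sh^2$. Finally, for the strong space $C([0,T];\sh\times\sh^1)$, the compact embedding $\ve\times\sh^2\hookrightarrow\sh\times\sh^1$ together with the Aldous condition gives strong compactness via a Simon/Arzelà--Ascoli argument: the uniform modulus in $\sh\times\sh^1$ plus pointwise relative compactness in that space produce a compact set in $C([0,T];\sh\times\sh^1)$ carrying most of the mass.

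The delicate step is the passage to the weak-continuous topology on $[0,T]$, where one must check that the Aldous condition, stated in $\sh\times\sh^1$, is compatible with the $L^\infty_t(\ve\times\sh^2)$ bound so as to rule out jumps in the weaker dual pairing; this is exactly the content of \cite[Corollary 3.9]{ZB+EM}, and once its hypotheses are verified as above, the tightness statement follows. Combining the three marginal tightness conclusions yields tightness of the joint law on $\mathbf{X}_T\times\mathbf{Y}_T\times[0,T]$.
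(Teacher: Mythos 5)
\begin{rem}
Your proposal is correct and follows essentially the same route as the paper, which simply derives the corollary from the uniform estimates (Lemma \ref{Energy estimates I} and Proposition \ref{Key uniform estimates}, repackaged as Proposition \ref{Global Estimate} for the extended processes $(\bv^\eps,\bd^\eps)$), the Aldous condition of Proposition \ref{Aldous}, and the compactness criterion of \cite[Corollary 3.9]{ZB+EM}; your reduction of joint tightness to the three marginals and your unpacking of how each topology in $\mathbf{X}_T$ is handled is exactly the content of that cited criterion.
\end{rem}
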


From  Corollaries  \ref{cor-01} and \ref{cor-Tight},  applying  the Jakubowski-Skorokhod representation theorem, \cite{Jakubowski} (see also  \cite[Theorem 3.11 ]{ZB+EM}),   we have the following result.

\begin{prop}
	There exist a new probability space $(\Omega, \mathcal{F}, \mathbb{P})$, not relabeled,   $\mathbf{X}_T\times \mathbf{Y}_T \times [0,T]$-valued sequence $(Z^\eps):=([(\tilde{\bv}^\eps, \tilde{\bd}^\eps); (W^\eps, \eta^\eps);  \tau^\eps])$ and  $\mathbf{X}_T\times \mathbf{Y}_T \times [0,T]$-valued random variable $Z:= [(\bv,\bd);(\tilde{W}, \tilde{\eta});\tau]$ such that
	\begin{align}
	& \text{law}_{\mathbf{X}_T\times \mathbf{Y}_T\times [0,T]} (Z^\eps)= \text{law}_{\mathbf{X}_T\times [0,T]} ([(\bv^\eps, \bd^\eps);(W,\eta);\sigma^\eps]),\\
	& Z^\eps \to Z \text{ in } (\mathbf{X}_T\cap \mathbf{X}^\alpha_T) \times \mathbf{Y}_T \times [0,T] \;\; \mathbb{P}\text{-a.s.},\label{Eq:Strong-Conv}\\
	& \mathbb{P}\text{-a.s.}  \mathds{1}_{[0,\tau^\eps]} \lvert \tilde{\bd}^\eps\rvert^2-1 \mathds{1}_{[0,\tau]}\to 0 \text{ in } L^q([0,T];\el^2  ) \forall q\in [2,\infty).		\label{Eq:Sphere-Conv}
	\end{align}
\end{prop}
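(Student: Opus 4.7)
The plan is to feed the tightness from Corollary \ref{cor-Tight} into the Jakubowski-Skorokhod theorem.  Since the space $\mathbf{X}_T \times \mathbf{Y}_T \times [0,T]$ is not metrisable (the factors $C_{\mathrm{weak}}([0,T];\ve\times\sh^2)$ and $L^2_{\mathrm{weak}}(0,T;D(A)\times\bH^3)$ carry weak topologies), the classical Skorokhod theorem does not apply; however, Jakubowski's version does, provided the space admits a countable family of continuous real-valued functions separating points.  This property is verified by taking pairings with a countable dense subset of the duals of $\ve\times\sh^2$ and $L^2(0,T;D(A)\times\bH^3)$ evaluated at rational times, together with the fact that $C([0,T];\sh\times\sh^1)\times\mathbf{Y}_T\times[0,T]$ is Polish.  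Applied to $[(\bv^\eps,\bd^\eps);(W,\eta);\sigma^\eps]$, Jakubowski's theorem supplies a new probability space, a subsequence $Z^\eps=[(\tilde{\bv}^\eps,\tilde{\bd}^\eps);(W^\eps,\eta^\eps);\tau^\eps]$ with the asserted matching of laws, and a limit $Z$ such that $Z^\eps\to Z$ almost surely in $\mathbf{X}_T\times\mathbf{Y}_T\times[0,T]$.

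To strengthen this to convergence in the scale $\mathbf{X}^\alpha_T$ for $\alpha\in[1,2)$, I would transport the uniform estimates of Proposition \ref{Global Estimate} onto the Skorokhod sequence via the equality of laws, obtaining uniform bounds for $\tilde{\bv}^\eps$ in $L^p(\Omega;L^\infty(0,T;\ve)\cap L^2(0,T;D(\stok)))$ and for $\tilde{\bd}^\eps$ in $L^p(\Omega;L^\infty(0,T;\bH^2)\cap L^2(0,T;\bH^3))$ for every $p\in\mathbb{N}$.  Combined with the strong convergence in $C([0,T];\sh\times\sh^1)$ already built into $\mathbf{X}_T$, Lions-Magenes interpolation in the scales $D(\stok^\theta)$ and $\bH^\theta$ upgrades the pointwise-in-time convergence from $\sh\times\sh^1$ to $D(\stok^{(\alpha-1)/2})\times\bH^\alpha$, and, using the weak $L^2$ convergence in $D(\stok)\times\bH^3$, to strong $L^2$ convergence in $D(\stok^{\alpha/2})\times\bH^{1+\alpha}$.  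Uniform integrability via the higher moment bounds then produces the almost sure convergence \eqref{Eq:Strong-Conv}, possibly along a further subsequence.

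For the sphere-defect convergence \eqref{Eq:Sphere-Conv}, I would transport Corollary \ref{cor-01}, read as applying to the director, to obtain $1-\lvert\tilde{\bd}^\eps\rvert^2\to 0$ in $L^2(\Omega;C([0,T];\el^2))$, and similarly transfer the pointwise bound $\lvert\tilde{\bd}^\eps(t,x)\rvert\le 1$ from Theorem \ref{thm-BHR}(iii).  Since $\tau^\eps\to\tau$ almost surely in $[0,T]$, dominated convergence on $[0,T]\times\CO$, with the uniform $\el^\infty$ bound providing the majorant, then delivers $\mathds{1}_{[0,\tau^\eps]}\lvert\tilde{\bd}^\eps\rvert^2-\mathds{1}_{[0,\tau]}\to 0$ in $L^q([0,T];\el^2)$ for every $q\in[2,\infty)$.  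The main obstacle, as I see it, is not the analytic estimate but the measurability/transport step: to transfer an almost-sure property holding \emph{pointwise} in $(t,x,\omega)$ through the Skorokhod construction, one must recognise the relevant subsets (such as $\{d\in C([0,T];\sh^1):\lvert d(t,x)\rvert\le 1 \text{ for $\Leb$-a.e.\ }(t,x)\}$) as Borel subsets of $\mathbf{X}_T$, so that matching of laws forces the new sequence to inherit the property almost surely and not merely in distribution; this is the point at which the whole argument turns, since without it both the $\el^\infty$ bound and the defect decay would only hold in law and \eqref{Eq:Sphere-Conv} could not be established in its pointwise form.
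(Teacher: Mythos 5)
Your proposal is correct and follows essentially the same route as the paper, which simply invokes Corollaries \ref{cor-01} and \ref{cor-Tight} together with the Jakubowski--Skorokhod representation theorem (citing \cite{Jakubowski} and \cite[Theorem 3.11]{ZB+EM}) and gives no further detail. The additional points you supply --- the countable separating family justifying Jakubowski's theorem on the non-metrisable space $\mathbf{X}_T\times\mathbf{Y}_T\times[0,T]$, the interpolation upgrade to $\mathbf{X}^\alpha_T$, and the Borel-measurability issue in transporting the pointwise bound $\lvert\tilde{\bd}^\eps\rvert\le 1$ and the defect decay of Corollary \ref{cor-01} (correctly read as a statement about $\bn^\eps$, not $\bu^\eps$) --- are exactly the steps the paper leaves implicit, and they are handled correctly.
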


In order to conclude the proof of Theorem \ref{thm-main} we need to pass to the limit. Thanks to the strong convergence \eqref{Eq:Strong-Conv} the passage to the equation for the velocity $\bv$ can be done as in \cite{ZB+EM}. The passage to the limit in the director equation needs special care. In particular, we need  the convergence \eqref{Eq:Sphere-Conv} and the following equivalence  result which can be established as in \cite{FA+AdeB+AH}.
	\begin{prop}
Let $\bu \in C([0,T]; \mathrm{H})\cap L^2(0,T; \ve)$ and consider the problems 
		\begin{equation}\label{Eq-UsualFrom of Eq Dir}
		d\bn+ \bu\cdot \nabla \bn= \Delta \bn + \lvert \nabla \bn \rvert^2 \bn + \frac12 G^2_\bh(\bn) + (\bn \times \bh)d\eta,  \;\; \lvert \bn \rvert=1
		\end{equation}
		and
		\begin{equation}\label{Eq-AltFrom of Eq Dir}
		\bn \times d\bn+ \bn \times (\bu\cdot \nabla \bn)= -\Div(\bn \times \nabla \bn  ) +\frac12 \bn \times G^2_\bh(\bn) + \bn \times (\bn \times \bh)d\eta,  \;\; \lvert \bn \rvert=1. 
		\end{equation}
		
If $\bn \in C([0,T]; \bH^1)\cap L^2(0,T; \bH^2)$ satisfies \eqref{Eq-UsualFrom of Eq Dir} then it satisfies \eqref{Eq-AltFrom of Eq Dir}, and vice versa. 
	\end{prop}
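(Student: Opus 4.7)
My plan is to prove both implications by applying the pointwise vector operation $\bn(s,x)\times\cdot$ to the equations and exploiting the triple-cross-product identity
\[
\bn\times(\bn\times X)=(\bn\cdot X)\bn-X,
\]
which holds pointwise under the constraint $|\bn|=1$. The regularity $\bn\in C([0,T];\bH^1)\cap L^2(0,T;\bH^2)$ together with the 2D Sobolev embedding $\bH^2\embed L^\infty$ and the constraint $|\bn|\le 1$ ensures that all nonlinear terms ($\bu\cdot\nabla\bn$, $|\nabla\bn|^2\bn$, $\mathrm{div}(\bn\times\nabla\bn)$, $(\bn\times\bh)\times\bh$) lie in $L^2(0,T;\el^2)$ almost surely, so every manipulation is justified in $\el^2$.

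For the implication \eqref{Eq-UsualFrom of Eq Dir} $\Rightarrow$ \eqref{Eq-AltFrom of Eq Dir}, I would left-multiply the It\^o equation by the progressively measurable integrand $\bn(s)\times$; this commutes with both Lebesgue and It\^o integration because $\bn$ is adapted and sufficiently integrable. The cubic term disappears since $\bn\times\bn=0$, and the elementary identity $\partial_j(\bn\times\partial_j\bn)=\bn\times\Delta\bn$ (the symmetric piece $\partial_j\bn\times\partial_j\bn$ vanishing) converts the Laplacian into $\mathrm{div}(\bn\times\nabla\bn)$ up to the paper's sign convention. The remaining terms (drift, Stratonovich correction, stochastic integral) transform term-by-term.

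For the converse \eqref{Eq-AltFrom of Eq Dir} $\Rightarrow$ \eqref{Eq-UsualFrom of Eq Dir}, I would reconstruct $d\bn$ from its tangential and normal parts. The tangential part is obtained by applying $\bn\times$ once more to \eqref{Eq-AltFrom of Eq Dir}: the identity $\bn\times(\bn\times\Delta\bn)=(\bn\cdot\Delta\bn)\bn-\Delta\bn$ combined with $\bn\cdot\Delta\bn=-|\nabla\bn|^2$ (obtained by differentiating $|\bn|^2\equiv 1$ twice) exactly reconstructs the expression $\Delta\bn+|\nabla\bn|^2\bn$. To fix the normal component I would appeal to It\^o's formula for the constant process $|\bn|^2\equiv 1$: writing the unknown $d\bn=A\,dt+B\,d\eta$, the Alt equation determines $B$ only modulo $\bn$, i.e.\ $B=\bn\times\bh+\mu\bn$, and the relation
\[
0=d|\bn|^2=2(\bn\cdot A)\,dt+2\mu\,d\eta+(|\bn\times\bh|^2+\mu^2)\,dt
\]
forces $\mu=0$ and $\bn\cdot A=-\tfrac12|\bn\times\bh|^2$. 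A direct calculation using $(\bn\times\bh)\times\bh=(\bn\cdot\bh)\bh-|\bh|^2\bn$ shows that $\bn\cdot[\tfrac12 G^2_\bh(\bn)]=-\tfrac12|\bn\times\bh|^2$, so the It\^o--Stratonovich correction already carries the correct normal piece and the reconstructed equation is precisely \eqref{Eq-UsualFrom of Eq Dir}.

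The main obstacle I expect is the rigorous bookkeeping of the It\^o--Stratonovich corrections: the algebra above hides a delicate matching of the quadratic-variation terms imposed by the sphere constraint against the normal component of $\tfrac12 G^2_\bh(\bn)$, and one must ensure that the cross product of an It\^o integral with a random, adapted integrand truly equals the stochastic integral of the cross product (without spurious correction) at the required level of regularity. A clean way to finesse this is to first rewrite both equations in their Stratonovich form, perform all cross-product manipulations by means of the ordinary chain rule, and then convert back to It\^o; this is precisely the scheme adopted in \cite{FA+AdeB+AH} for a structurally identical equivalence in the stochastic Landau--Lifshitz--Gilbert setting, which can be transplanted to the present situation without essential modification.
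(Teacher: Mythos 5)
Your proposal is correct and follows essentially the same route as the paper, which gives no details and simply defers to the Landau--Lifshitz argument of Alouges, de Bouard and Hocquet \cite{FA+AdeB+AH} -- precisely the scheme (Stratonovich reformulation, cross--multiplication by $\bn$, reconstruction of the normal component from $d\lvert\bn\rvert^2=0$) that you describe. Your verification that $\bn\cdot\bigl[\tfrac12 G^2_\bh(\bn)\bigr]=-\tfrac12\lvert\bn\times\bh\rvert^2$ matches the quadratic-variation term is exactly the key point, and in fact supplies more detail than the paper itself.
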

With this proposition at hand and \eqref{Eq:Sphere-Conv} we can now carry out as is done in \cite{FA+AdeB+AH} the passage to the limit in the equations for the director field $\bn$ and conclude the proof of Theorem \ref{thm-main}.

%
%

\end{document}